\documentclass[12pt, oneside,usenames,dvipsnames]{amsart}

\usepackage{amsmath,ifthen, amsfonts, amssymb, yhmath,
srcltx, 
   amsopn,tikz, tkz-euclide, url}
\usetikzlibrary{decorations.markings,arrows, cd}
\usepackage{tikz-cd}
 \usepackage{dutchcal}
\usepackage{xfrac} 
\usepackage{graphicx, ifsym}
\usepackage{fullpage}
\tikzset{>=latex}

\usepackage{thmtools}
\usepackage{thm-restate}
\usepackage{pgfplots}
\usetikzlibrary{intersections, pgfplots.fillbetween}

\usepackage{hyperref}

\usepackage{cleveref}

\usepackage{soul} 
\setstcolor{red}


\newcommand{\showcomments}{yes}
\renewcommand{\showcomments}{no}

\newsavebox{\commentbox}
\newenvironment{com}%
{\ifthenelse{\equal{\showcomments}{yes}}%
{\footnotemark
        \begin{lrbox}{\commentbox}
        \begin{minipage}[t]{1.25in}\raggedright\sffamily\tiny
        \footnotemark[\arabic{footnote}]}
{\begin{lrbox}{\commentbox}}}%
{\ifthenelse{\equal{\showcomments}{yes}}%
{\end{minipage}\end{lrbox}\marginpar{\usebox{\commentbox}}}
{\end{lrbox}}}

\usepackage{amsmath,amsfonts, amssymb, graphicx, tikz, bbding,ifthen,srcltx,amsopn, url}
\usepackage{bm}
\usetikzlibrary{decorations.markings,arrows, intersections}
\usetikzlibrary{decorations.pathreplacing}

\DeclareMathOperator{\Stab}{Stab}

\theoremstyle{definition}
\newtheorem{thm}{Theorem}[section]
\newtheorem{lem}[thm]{Lemma}
\newtheorem{prop}[thm]{Proposition}

\newtheorem{question}[thm]{Question}
\newtheorem{remark}[thm]{Remark}
\newtheorem{defn}[thm]{Definition}
\newtheorem{cor}[thm]{Corollary}

\newtheorem{notn}[thm]{Notation}


\author{Kasia Jankiewicz}
\address{Department of Mathematics, University of British Columbia, Canada}
\email{kasia@math.ubc.ca}
\title{Finite Stature in Artin groups}

\setcounter{tocdepth}{3}

\begin{document}
\begin{com}
{\bf \normalsize COMMENTS\\}
ARE\\
SHOWING!\\
\end{com}

\begin{abstract} We give criteria for a graph of groups to have finite stature with respect to its collection of vertex groups, in the sense of Huang-Wise. We apply it to the triangle Artin groups that were previously shown to split as a graph of groups. This allows us to deduce residual finiteness, and expands the list of Artin groups known to be residually finite.
\end{abstract}

\subjclass[2010]{20F36, 20F65, 20E26}
\keywords{Artin groups, Finite stature, Graphs of groups, Residual finiteness}

\maketitle

\section{Introduction}
A group $G$ has \emph{finite stature} with respect to a collection of subgroups $\Omega$, if for every $H\in \Omega$ there are only finitely many $H$-conjugacy classes of subgroups of the form $H\cap\bigcap_{i\in I}H_i^{g_i}$ where $H_i^{g_i}$ is a $G$-conjugate of an element $H_i\in\Omega$. Finite stature was introduced by Huang-Wise in \cite{HuangWiseStature19} where they proved that under certain assumptions the fundamental group $G$ of a graph of groups has certain separability properties, provided that $G$ has finite stature with respect to its collection of vertex groups. In \cite{HuangWiseSpecial19} the same authors showed that a graph of nonpositively curved cube complexes $X$ with word hyperbolic fundamental group is virtually special, provided that $\pi_1X$ has finite stature with respect to the vertex groups in the corresponding splitting as a graph of groups. Finite stature is closely related to the more classical notion of finite \emph{height}, introduced and studied in \cite{GMRS98}.

The goals of this paper are two-fold. Firstly, we illustrate that the notion of finite stature is satisfied and useful in well-studied groups arising naturally in topology. Indeed, we provide explicit examples of very different nature than the groups studied in \cite{HuangWiseStature19, HuangWiseSpecial19}, as they are not hyperbolic and not virtually compact special. Specifically we show that the splittings of certain Artin groups obtained by the author in \cite{JankiewiczArtinRf, JankiewiczArtinSplittings} have finite stature with respect to the vertex groups. Secondly, we deduce the residual finiteness of those Artin groups, which was previously not known in some cases.

A \emph{triangle Artin group} is an Artin group on three generators, given by the presentation
$$G_{MNP} = \langle a, b, c\mid (a, b)_{M} = (b, a)_{M}, \,\,\,(b,c)_{N} = (c,b)_{N}, \,\,\,(c,a)_{P} = (a,c)_{P}\rangle,$$
where $(a,b)_M$ denotes the alternating word $aba\dots$ of length $M$. 

\begin{thm}\label{thm: main}
A triangle Artin group $G_{MNP}$ splits as graphs of free groups with finite stature with respect to its collection of vertex groups, provided that either $M>2$ or $N>3$, where we assume that $M\leq N\leq P$.
\end{thm}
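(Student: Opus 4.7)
The plan is to reduce \Cref{thm: main} to a direct application of the general finite-stature criteria that the paper develops for graphs of groups, with the splittings themselves being imported from \cite{JankiewiczArtinRf, JankiewiczArtinSplittings}. So the first step is simply to invoke those earlier splittings: under the hypothesis $M \leq N \leq P$ with $M>2$ or $N>3$, each $G_{MNP}$ is known to be the fundamental group of a finite graph of groups $\mathcal{G}$ in which every vertex group is a finitely generated free group and every edge group is a finitely generated subgroup (typically generated by explicit words in $a,b,c$). I would recall, with references, the precise splittings in each of the relevant cases (roughly: the dihedral-type case $M=2$, $N>3$; the case where $M\geq 3$; and possibly sub-cases according to parities of $M,N,P$ as in \cite{JankiewiczArtinSplittings}) and fix notation for their vertex and edge groups.

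The second step is to unpack the finite-stature criteria from the general part of the paper and identify exactly what must be checked about $\mathcal{G}$. Such criteria typically demand two things: a local control on how edge groups sit inside vertex groups (some quasiconvexity/malnormality/controlled-intersection property in the free vertex groups), and a global control on the Bass--Serre tree $T$ associated to $\mathcal{G}$, ensuring that only finitely many conjugacy classes of intersections $H \cap \bigcap_i H_i^{g_i}$ arise as the $g_i$ range over $G$. I would formulate these as explicit conditions on the pair (vertex group, collection of incident edge groups) and on the action of $G$ on $T$.

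The third step, which is the bulk of the work, is to verify these conditions for the triangle Artin splittings. Since the vertex groups are free, questions about intersections and conjugacy classes reduce to Stallings-foldings / subgroup-graph arguments. Concretely, for each vertex group $F_v$ and each finite collection of edge subgroups $E_1,\dots,E_k \leq F_v$ incident to $v$, one has to bound the number of $F_v$-conjugacy classes of subgroups of the form $E_{i_1} \cap E_{i_2}^{w_2} \cap \dots$ that can occur, and show that further intersection along paths of length $\geq 2$ in $T$ contributes nothing new up to conjugacy. This is where the combinatorial structure of the edge groups from \cite{JankiewiczArtinRf, JankiewiczArtinSplittings} (explicit generators in the free vertex groups) plays the decisive role, and where one exploits the hypothesis $M>2$ or $N>3$, which is exactly what makes the splittings nondegenerate enough for the edge groups to behave well.

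The main obstacle I anticipate is precisely this last step: controlling the iterated intersections. A single intersection inside a free group is easy (Howson's theorem), but bounding the intersection pattern up to conjugacy along arbitrary reduced paths in $T$ requires genuinely using the tree action and ruling out infinite families of non-conjugate intersections. I expect to dispatch this by showing that each edge group embeds as a sufficiently controlled (e.g.\ almost malnormal, or with finite-index normalizer, or satisfying an appropriate ``Wise-type'' condition in the free vertex group), so that intersections stabilize after a uniformly bounded number of steps along any geodesic in $T$; combined with cocompactness of the action of $G$ on $T$, this yields the required finiteness of $H$-conjugacy classes and completes the verification of finite stature.
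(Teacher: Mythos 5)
Your high-level reduction matches the paper's framework: import the splittings $G_{MNP}=A*_CB$ (or $A*_B$ when $P=2$ and $M,N$ are even) from \cite{JankiewiczArtinRf, JankiewiczArtinSplittings}, reduce finite stature to showing there are only finitely many conjugacy classes of path stabilizers $G_v\cap\Stab(\rho)$ in the Bass--Serre tree (Proposition~\ref{prop: conjugacy classes of path stabilizers}), and compute those stabilizers by iterated Stallings fiber products of core graphs (Lemma~\ref{lem:fiber product}, Lemma~\ref{lem:computing stabilizers}). Up to that point you are on the paper's track.

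The gap is in your final step, the one you yourself flag as the main obstacle: the mechanism you propose for controlling iterated intersections would fail for these groups. The edge group $C$ is not almost malnormal in the vertex groups, and no malnormality or finite-index-normalizer hypothesis holds: intersections of distinct conjugates of $C$ inside $A$ are typically infinite free groups of rank at least $2$ (the fiber products $\overline Y_2$, $\overline Y_3$ computed in Lemmas~\ref{lem:44(2p+1)}, \ref{lem:three odd}, \ref{lem:33(2p+1)} have positive first Betti number), and in the extreme case $(M,N,P)=(3,3,3)$ the group $C$ is normal in both $A$ and $B$ (Proposition~\ref{lem:333}), the exact opposite of malnormal. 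Consequently the intersections do not shrink to something negligible after a uniformly bounded number of steps along a geodesic; what the paper actually proves is that the core graphs $\overline Y_\ell$ representing path stabilizers eventually embed in one another as subgraphs ($\overline Y_{k+2}\hookrightarrow\overline Y_k$ in most cases, $\overline Y_5\hookrightarrow\overline Y_3$ when $(M,N,P)=(3,3,2p+1)$), so that only finitely many isomorphism types occur (Lemma~\ref{lem: finitely many colored graphs}). The idea that makes this provable, and which is absent from your outline, is the monochrome-cycles-preserving structure: one passes to the index-$2$ subgroup $A*_CA$ (Propositions~\ref{prop:passing to finite index} and \ref{prop:artin monchrome cycles preserving}), colors the edges of the vertex and edge graphs by the three standard generators, shows every core $\overline Y_\ell$ is a union of monochrome cycles of prescribed lengths (Lemma~\ref{lem:preserved length of colored cycles}), and then either verifies that the $2$-complex obtained by attaching polygons along those cycles is simply connected, which forces deeper cores to be embedded subgraphs (Lemmas~\ref{lem:simply connected Y triangle} and \ref{lem: finite stature of twisted doubles}), or carries out explicit case-by-case fiber-product computations. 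Note also that the recursion for path stabilizers (Lemma~\ref{lem:stabilizers as intersections}) exploits the specific feature $[B:C]=2$, which your outline never uses. Without these ingredients, or a genuine substitute for them, your plan stalls exactly where you anticipated it would.
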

As a consequence (using results of \cite{HuangWiseStature19}) we obtain the following.
\begin{cor}
A triangle Artin group $G_{MNP}$, where $M\leq N\leq P$ and either $M>2$ or $N>3$, is residually finite.
\end{cor}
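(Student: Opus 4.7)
The plan is to combine Theorem~\ref{thm: main} with the general residual finiteness criterion for graphs of groups of finite stature established by Huang and Wise in \cite{HuangWiseStature19}. The first step is to invoke Theorem~\ref{thm: main} to present $G_{MNP}$, under the given hypothesis on $(M,N,P)$, as the fundamental group of a graph of groups in which every vertex group is a free group and which has finite stature with respect to its collection of vertex groups.

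Next I would verify the remaining separability hypotheses needed for the Huang--Wise criterion. Free groups are LERF by Hall's theorem, so in particular they are residually finite and every finitely generated subgroup is closed in the profinite topology. Since the edge groups of the splittings constructed in \cite{JankiewiczArtinSplittings} are finitely generated subgroups of free vertex groups, they are separable in the adjacent vertex groups. At this point all hypotheses of the relevant Huang--Wise theorem are in place: finitely generated residually finite vertex groups, separable edge groups, and finite stature with respect to the vertex groups. Applying that theorem then yields that $G_{MNP}$ is residually finite.

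The main point requiring care is matching the hypotheses of the Huang--Wise residual finiteness theorem to the output of Theorem~\ref{thm: main}---in particular, confirming that finite generation of edge groups and LERF of vertex groups together with finite stature are precisely what the cited result needs. Once the dictionary between the two setups is pinned down, the deduction is formal, so no further computation inside the Artin groups is required beyond what already enters the proof of Theorem~\ref{thm: main}.
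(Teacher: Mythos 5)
Your overall route is exactly the paper's: Theorem~\ref{thm: main} produces a splitting of $G_{MNP}$ as a finite graph of finite rank free groups having finite stature with respect to its vertex groups, and residual finiteness is then deduced from the Huang--Wise criterion \cite[Thm 1.3]{HuangWiseStature19}. However, the hypothesis-matching step --- which you yourself single out as the main point requiring care --- is carried out against the wrong list of hypotheses. The Huang--Wise theorem used here (as quoted in Section~\ref{sec: finite stature}) does not ask for LERF vertex groups and separable edge groups; it requires (1) each vertex group to be hyperbolic and virtually compact special, (2) each edge group to be quasiconvex in its vertex groups, and (3) finite stature of $(G,\{G_v\})$. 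Hall's theorem and separability of finitely generated subgroups of free groups are therefore not the relevant inputs, and verifying them does not discharge the actual hypotheses of the cited result.

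The gap is easily repaired, and this is precisely the remark the paper makes: conditions (1) and (2) hold automatically for any finite graph of finite rank free groups. Finite rank free groups are hyperbolic and compact special (they are fundamental groups of finite graphs), and since free groups are locally quasiconvex, the finite rank edge groups furnished by Theorems~\ref{thm:splitting} and~\ref{thm:splitting2} are quasiconvex in the adjacent vertex groups. With (1)--(3) in place, \cite[Thm 1.3]{HuangWiseStature19} applies and yields residual finiteness (and in fact separability of finitely generated subgroups of $A$, as stated in the precise version of the main theorem). So your conclusion and architecture match the paper's; only the list of conditions you check needs to be replaced by the hyperbolicity, virtual specialness, and quasiconvexity conditions.
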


The condition on $M,N,P$ in Theorem~\ref{thm: main} excludes the cases  $(M,N,P)=(2,2,P)$ and $(M,N,P)=(2,3,P)$. In the first case, the corresponding Artin group $G_{MNP}$ is isomorphic to $Z\times A_P$ where $A_P$ denotes a dihedral Artin group, and consequently $G_{MNP}$ does not split as a graph of free groups, but is well-known to be residually finite.
Wu-Ye have recently shown that $G_{2,3,P}$ with $p\geq 6$ splits as a graphs of finite rank free groups if and only if $P$ is even \cite{wu2023splittings}. In a subsequent work, Meyer proved that $G_{23P}$ with $P$ even has finite stature with respect to its collection of vertex groups \cite{Meyer24}. It remains open whether $G_{23P}$ with $P\geq 7$ is residually finite.

There are a few other classes of Artin group that are known to be residually finite. 
 In the case of spherical type Artin groups, residual finiteness follows from linearity \cite{Krammer2002,Bigelow2001,CohenWales2002,Digne2003}. 
The linearity of a few other Artin groups was established as a consequence of being virtually special \cite{LiuGraphManifolds, PrzytyckiWiseGraphManifolds}, but none of the triangle Artin groups considered in Thereom~\ref{thm: main} admit virtual geometric actions on CAT(0) cube complexes \cite{HuangJankiewiczPrzytycki16, HaettelArtin}. Residual finiteness of some other Artin groups was proven in \cite{BlascoGarciaJuhaszParis18, BlascoGarciaMartinezPerezParis19}. 

Some, but not all, of the groups considered in the above corollary were proven to virtually split as \emph{algebraically clean} graphs of free groups, i.e.\ graphs of finite rank free groups where all inclusions of edge groups in the adjacent vertex groups are inclusions as free factors, in \cite{JankiewiczArtinRf, JankiewiczArtinSplittings}. Such groups are known to be residually finite \cite{WisePolygons}.
Our method allows us to deduce residual finiteness of new Artin groups, but also recover the residual finiteness of the Artin groups treated in \cite{JankiewiczArtinRf, JankiewiczArtinSplittings}.

Group virtually splitting as algebraically clean graphs of groups satisfy some stronger profinite properties than residual finiteness, 
some of which are discussed in the forthcoming paper \cite{JankiewiczSchreveProfiniteAlgebraicallyClean}. 
We do not know whether all the groups considered in this paper are in fact virtually algebraically clean. More generally, the following is open.
\begin{question}
Let $G$ be a graph of finite rank free groups with finite stature with respect to its collection of vertex groups. Does $G$ have a finite index subgroup whose induced splitting is algebraically clean?
\end{question}
The converse is known to be false, as there are examples of algebraically clean graphs of free groups that do not have finite stature \cite[Exmp 3.31]{HuangWiseStature19}. On the other hand, we do not know whether there exists a group $G$ splitting as an algebraically clean graph of groups such that $G$ does not have finite stature with respect to \emph{any} splitting with free vertex groups.

This paper is organized as follows. In Section~\ref{sec: maps} we state some facts about maps between graphs and free groups, and fix the notation and terminology. Section~\ref{sec: finite stature} discusses the notion of finite stature, and we prove some facts used later in the text. Section~\ref{sec: graphs of groups} studies certain families of graphs of free groups. Finally, Section~\ref{sec: Artin} is devoted to Artin groups, and contains computations that allow us to apply the results from earlier sections to prove Theorem \ref{thm: main}.

\subsection*{Acknowledgements}
The author thanks Jingyin Huang and Dani Wise for fruitful discussions, and the anonymous referee for their corrections and suggestions. This material is based upon work supported by the National Science Foundation grants DMS-2203307,  DMS-2238198, and DMS-1926686.  

\section{Preliminaries}\label{sec: maps}
\subsection{Maps between graphs}
A \emph{combinatorial graph} $\Gamma$ is a disjoint union $V(\Gamma)\sqcup E(\Gamma)$ together with the operation $E(\Gamma)\to E(\Gamma), e\mapsto \bar e$ of taking the \emph{opposite edge} (i.e.\ the same edge with opposite orientation), and the operation $E(\Gamma)\to V(\Gamma), e\mapsto \tau(e)$ of taking the \emph{endpoint} of an oriented edge.

A \emph{metric graph} is a combinatorial graph that can also be viewed as a $1$-dimensional CW-complex, with a path metric in which each $1$-cell has length $1$. 
Later, we will be considering graphs of free groups and corresponding graphs of spaces where the spaces are graphs as well. We will denote the underlying graph of the graph of groups/graphs by $\Gamma$, while the vertex and edge spaces will be denoted by letters such as $X, Y$ and will be viewed as metric graphs. The following definitions will be applied to graphs arising as vertex and edge spaces.

A continuous map $\phi:Y\to X$ between two metric graphs is \emph{combinatorial}, if the image of each $0$-cell of $Y$ is a $0$-cell of $X$, and while
restricted to an open $1$-cell with endpoints $y_1,y_2$, $\phi$ is an isometry onto an edge with endpoints $\phi(y_1), \phi(y_2)$. A \emph{combinatorial immersion} is a combinatorial map $f:Y\to X$ which is locally injective. Every combinatorial immersion is $\pi_1$-injective \cite[Prop 5.3]{Stallings83}. Also, every combinatorial immersion can be ``completed'' to a covering map by attaching trees to $Y$, without changing its homotopy type.

A \emph{Stalling's fold} is a combinatorial map $f:Y\to X$ where 
\begin{itemize}
\item there exist distinct edges $y_1, y_2\in E(Y)$ such that $\tau(\bar y_1) = \tau(\bar y_2)$, and $E(X) = E(Y)/ {y_1\sim y_2}$,
\item $V(X) = V(Y)/ \tau(y_1) \sim \tau(y_2)$, and
\item $f$ is the natural quotient map, where $f(y_1) = f(y_2)$.
\end{itemize}
We note that $f$ is a homotopy equivalence if and only if $ \tau(y_1) \neq \tau(y_2)$.

We will also consider more general maps between graphs than combinatorial.
\begin{defn}
A continuous map $\phi:Y\to X$ between two metric graphs is \emph{monotone}, if the image of each $0$-cell of $Y$ is a $0$-cell of $X$, and while restricted to each 1-cell $y$ of $Y$, $\phi$ is either constant and its image is a $0$-cell $x$ in $X$, or $\phi$ is a combinatorial map after possibly subdividing $y$ into $n$ nontrivial subintervals.
 \end{defn}
 
 Here are two important examples of monotone maps.
 An \emph{edge-subdivision} is a monotone map $f:Y\to X$ where 
\begin{itemize}
\item there exists an edge $y\in E(Y)$ and edges $y_1, \dots, y_k \in E(X)$ where $k\geq 2$ such that $f(y)$ is equal the path $y_1\cdots y_k$,
\item $E(Y)-\{y\} = E(X) -\{y_1, \dots, y_k \}$, and $f$ is the identity map on $E(Y)-\{y\}$,
\item $V(X) = V(Y) \sqcup \{\tau(y_1), \dots, \tau(y_{k-1})\}$, and $\tau(y_i) = \tau(\bar y_{i+1})$ for all $i=1, \dots, k-1$ (i.e. $y_1\cdots y_k$ is a path in $X$),
\end{itemize}
An edge-subdivision is always a homotopy equivalence.
An \emph{edge-collapse} is a monotone map $f:Y\to X$ where 
\begin{itemize}
\item there exist an edge $y \in E(Y)$ such that $E(X) = E(Y)-\{y\}$,
\item $V(X) = V(Y)/ \tau(y) \sim \tau(\bar y)$, and
\item $f$ is the natural quotient map, where $f_{|y}$ is constant.
\end{itemize}
Similarly, an edge-collapse is a homotopy equivalence if and only if $ \tau(y) \neq \tau(\bar y)$, i.e.\ if $y$ is not a loop.

The following proposition provides a useful factorization of every monotone map.
\begin{prop}\label{prop:factorization}
Every monotone map $\phi:Y\to X$ factors as $Y\xrightarrow{\sigma} \overline Y\xrightarrow{\iota} X$ where
\begin{itemize}
\item $\sigma:Y\to \overline Y$ is obtained by a sequence of edge-subdivisions, Stalling's folds, edge-collapses,
\item $\iota: \overline Y\to X$ is a combinatorial immersion.
\end{itemize}
\end{prop}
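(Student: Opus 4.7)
The plan is to perform three successive phases of elementary moves that first make $\phi$ combinatorial and then an immersion. First, for every edge $y \in E(Y)$ whose image under $\phi$ is a path of $k \geq 2$ edges of $X$, apply an edge-subdivision that breaks $y$ into $k$ subedges mapped isometrically onto the edges of that path. After all such subdivisions, the induced map $\phi_{1}: Y_{1} \to X$ sends each edge either constantly to a vertex of $X$ or isometrically onto a single edge of $X$. Second, for every edge $y \in E(Y_{1})$ on which $\phi_{1}$ is constant, apply an edge-collapse of $y$; the definition permits this even when $y$ is a loop, though in that case the move is not a homotopy equivalence. Since the two endpoints of each such $y$ are already sent to the same vertex of $X$, the map descends through the quotient, and the resulting $\phi_{2}: Y_{2} \to X$ is combinatorial by construction.

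Third, while there exist two distinct edges $y_{1}, y_{2}$ of the current graph with $\tau(\bar y_{1}) = \tau(\bar y_{2})$ and the same image under the current combinatorial map, apply the Stalling's fold identifying $y_{1}$ with $y_{2}$. Combinatoriality ensures that the map factors through each fold, and each fold strictly decreases the number of edges, so the procedure terminates at a combinatorial map $\iota : \overline Y \to X$ admitting no foldable pair. Since the nonexistence of a foldable pair is exactly the condition of local injectivity, $\iota$ is a combinatorial immersion. Letting $\sigma$ denote the composition of all the elementary moves performed across the three phases yields the required factorization $\phi = \iota \circ \sigma$.

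The only real verification is that each elementary move is a legitimate factor of the current map, which is immediate from the definitions: a subdivision inserts a vertex at the preimage of an interior vertex of the image path; a collapse removes an edge whose two endpoints are already sent to the same vertex of $X$; and a Stalling's fold glues two edges that share both an initial vertex and an image edge. I do not foresee a substantive obstacle — the argument is essentially Stallings's classical factorization theorem, enlarged from the combinatorial setting to the monotone setting by the preparatory use of edge-subdivisions to handle edges mapped as longer paths and edge-collapses to handle edges mapped constantly.
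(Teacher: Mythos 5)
Your proof is correct and takes essentially the same route as the paper's: the paper likewise observes that a monotone map restricted to each edge is either an edge-collapse or an edge-subdivision post-composed with a combinatorial map, and then invokes Stallings's classical factorization of combinatorial maps into folds followed by a combinatorial immersion. The one caveat is that your termination argument for the folding phase (each fold strictly decreases the number of edges) implicitly assumes $Y$ is finite; for infinite graphs one needs the standard direct-limit version of Stallings's factorization, which is what the paper's appeal to the classical result covers.
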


\begin{proof} Every combinatorial map factors as a sequence of Stallings-folds followed by a combinatorial immersion \cite[Sec 3.3]{Stallings83}. By definition, a monotone map $\phi$ restricted to an edge is either an edge-collapse, or an edge-subdivision post-composed with a combinatorial map. The statement follows.
\end{proof}

\subsection{Subgroups of free groups}
Let $X$ be a metric graph with a basepoint $x\in X$, and let $F=\pi_1(X,x)$.

As noted above, for every combinatorial immersion $Y  \to X$ and $y\in Y$ that maps to $x$, the fundamental group $\pi_1(Y,y)$ naturally embeds as a subgroup of $F$ \cite[Prop 5.3]{Stallings83}, in which case we say \emph{$(Y,y)\to (X,x)$ represents $\pi_1(Y,y)$}. Two such groups $\pi_1(Y,y)$ and $\pi_1(Y,y')$ are conjugate in $F$ by an element of $F$ represented by the loop in $X$ that is an image of a path from $y$ to $y'$ in $Y$. More generally, any other $F$-conjugate of $\pi_1(Y,y)$, by say an element $g\in F$,   can be obtained by taking a union of $Y$ and a path labelled by $g$ with its start-point attached to $y$ and performing Stalling folds. The fundamental group of this new graph based at the endpoint of the added path is the conjugate $g^{-1}\pi_1(Y,y)g$.  Thus a map $Y\to X$ without specifying the basepoint in $Y$ determines an $F$-conjugacy class of subgroups of $F$. 
 
 Also, for every combinatorial immersion $Y  \to X$, there exists a covering map $\hat X\to X$ such that $Y \to X$ factors as an embedding $Y\to \hat X$ which is a homotopy equivalence, composed with the covering map $\hat X\to X$ \cite[Thm 6.1]{Stallings83}.

\begin{defn}[{\cite[Sec 7]{Stallings83}}]
Given a subgroup $H\subseteq F$, the \emph{core of $H$ with respect to $X$} is a based combinatorial immersion $i:(Y , y)\to (X, x)$ where $Y$ is the minimal subgraph of the covering space of $\hat X\to X$ that corresponds to $H$, where the inclusion $Y\to \hat X$ is a homotopy equivalence, and in particular induces an isomorphism $\pi_1(Y,y) \to \pi_1(\hat X, y) = H$.\end{defn}

We can also think of the core of an $F$-conjugacy class of $H$ as a combinatorial immersion $Y\to X$ where $Y$ is minimal among cores $(Y,y) \to (X,x)$ of subgroups in the conjugacy class. A core of an $F$-conjugacy class has no leaves, as removing leaves does not affect homotopy type of a graph.

When $(Y,y)\to (X,x)$ is a monotone map and $Y$ has no leaves (except for $y$ possibly), then $\iota:(\overline Y, \sigma(y))\to (X, x)$ in Proposition~\ref{prop:factorization} is the core of $\pi_1(Y,y)\subseteq \pi_1(X,x)$.

\subsection{Intersections of subgroups}

Let $\phi_i: Y_i\to X$ be a combinatorial immersion for $i=1,2$.
The \emph{fiber product of $Y_1$ and $Y_2$ over $X$} is the graph $Y_1\otimes_X Y_2$ with the vertex set
\[
\{(y_1,y_2)\in V(Y_1)\times V(Y_2): \phi_1(y_1) = \phi_2(y_2)\}
\] 
and the edge set
\[
\{(e_1, e_2)\in E(Y_1)\times E(Y_2): \phi_1(e_1) = \phi_2(e_2)\}.
\]
There is a natural combinatorial immersion $Y_1\otimes_X Y_2 \to X$, given by $(y_1, y_2)\mapsto \phi_1(y_1) = \phi_2(y_2)$.  

\begin{lem}[{\cite{Stallings83}}]\label{lem:fiber product}
Let $H_1, H_2\subseteq G = \pi_1 (X, v)$ where $X$ is a finite metric graph, and for $i=1,2$ let $(Y_i, y_i)\to (X, x)$ be the core of $H_i$ with respect to $X$. Then the intersection $H_1\cap H_2$ is represented by $(Y_1 \otimes_{X} Y_2, (y_1, y_2)) \to (X,x)$.
\end{lem}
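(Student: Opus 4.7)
The plan is to verify that the induced map $\phi: Y_1 \otimes_X Y_2 \to X$ is a combinatorial immersion whose $\pi_1$ image at $(\hat x_1, \hat x_2)$ is exactly $H_1 \cap H_2$. First I would check that $\phi$, defined by $(y_1,y_2)\mapsto \phi_1(y_1)=\phi_2(y_2)$, is locally injective: if two edges $(e_1,e_2),(e_1',e_2')\in E(Y_1\otimes_X Y_2)$ share a terminal vertex $(v_1,v_2)$ and satisfy $\phi(e_1,e_2)=\phi(e_1',e_2')$, then $\phi_1(e_1)=\phi_1(e_1')$ with the common endpoint $v_1$, so $e_1=e_1'$ by local injectivity of $\phi_1$, and similarly $e_2=e_2'$. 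Hence $\phi$ is a combinatorial immersion, so $\pi_1$-injective, and I may speak unambiguously of the subgroup $\phi_*\pi_1(Y_1\otimes_X Y_2,(\hat x_1,\hat x_2))\le \pi_1(X,x)$.

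Next I would establish the containment $\phi_*\pi_1(Y_1\otimes_X Y_2,(\hat x_1,\hat x_2))\subseteq H_1\cap H_2$. The two coordinate projections $p_i:Y_1\otimes_X Y_2\to Y_i$, $(y_1,y_2)\mapsto y_i$, are combinatorial maps with $\phi_i\circ p_i=\phi$ and $p_i(\hat x_1,\hat x_2)=\hat x_i$. Hence any based loop $\tilde\gamma$ in $Y_1\otimes_X Y_2$ yields based loops $p_i\circ\tilde\gamma$ in $Y_i$, and its image $\phi_*[\tilde\gamma]=(\phi_i)_*[p_i\circ\tilde\gamma]$ lies in $(\phi_i)_*\pi_1(Y_i,\hat x_i)=H_i$ for both $i$.

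For the reverse inclusion, I would take $g\in H_1\cap H_2$ and represent it by a reduced edge-path loop $\gamma$ in $X$ at $x$. Since $g\in H_i$ and $(Y_i,\hat x_i)\to(X,x)$ is the core of $H_i$ (so in particular a based precover whose image subgroup is $H_i$), $\gamma$ lifts to a based edge-path loop $\gamma_i:[0,1]\to Y_i$ at $\hat x_i$. I would then define $\tilde\gamma(t)=(\gamma_1(t),\gamma_2(t))$. On vertices this lands in the vertex set of $Y_1\otimes_X Y_2$ by construction, and for each edge traversed, the pair $(\gamma_1\text{-edge},\gamma_2\text{-edge})$ is an edge of the fiber product because both map to the same edge of $X$. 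Thus $\tilde\gamma$ is a based loop in $Y_1\otimes_X Y_2$ at $(\hat x_1,\hat x_2)$ with $\phi_*[\tilde\gamma]=[\gamma]=g$, giving $H_1\cap H_2\subseteq \phi_*\pi_1(Y_1\otimes_X Y_2,(\hat x_1,\hat x_2))$.

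The only point that requires care is the lifting of $\gamma$ to the core $Y_i$ rather than merely to the larger covering $\hat X_i\to X$ corresponding to $H_i$; but this is immediate from the definition of core, since every based loop at $\hat x_i$ in $\hat X_i$ representing an element of $H_i$ is contained in the minimal subgraph carrying $H_i$. Combining both inclusions, $\phi_*\pi_1(Y_1\otimes_X Y_2,(\hat x_1,\hat x_2))=H_1\cap H_2$, which is exactly the assertion that $H_1\cap H_2$ is represented by $(Y_1\otimes_X Y_2,(\hat x_1,\hat x_2))\to(X,x)$.
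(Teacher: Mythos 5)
The paper offers no proof of this lemma at all---it is quoted directly from Stallings' \emph{Topology of finite graphs}---so there is no internal argument to compare against; your proposal is a correct, self-contained rendition of the standard argument from that reference. The three steps (coordinatewise local injectivity of the fiber product map, the coordinate projections giving $\phi_*\pi_1(Y_1\otimes_X Y_2)\subseteq H_1\cap H_2$, and diagonal lifting of a based loop giving the reverse inclusion) are exactly how this is classically proved. One wording slip in your final paragraph deserves attention: it is \emph{not} true that every based loop at $\hat x_i$ in $\hat X_i$ representing an element of $H_i$ is contained in the core $Y_i$---a loop that wanders out along a tree branch of $\hat X_i$ and backtracks leaves the minimal subgraph. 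What is true, and what your proof actually uses, is that the lift of a \emph{reduced} loop is itself reduced (a backtrack in the lift would project to a backtrack in $\gamma$), and every reduced based loop lies in the core by minimality. Since you chose $\gamma$ reduced at the outset, the argument is sound once this is stated precisely; as written, the justification proves the needed instance from a false general claim.
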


We emphasize that in general fiber products have multiple connected components. When $(Y_i, y_i)\to (X, x)$ represents $H_i$, then each connected component of $Y_1 \otimes_{X} Y_2$ represents a conjugacy class of a subgroup of $G$ of the form $H_1^{g_1}\cap H_2^{g_2} = g_1^{-1}H_1g_1\cap g_2^{-1}H_2g$ for some $g_1, g_2\in G$.

\begin{lem}[{\cite[Lem 1.2]{GMRS98}}] \label{lem: finitely many conjugacy classes of intersections}
 Suppose $G$ is a Gromov hyperbolic group, and $H_1, H_2$ are quasiconvex subgroups.
Then there are only finitely many conjugacy classes of infinite intersections $H_1^{g_1}\cap H_2^{g_2}$ where $g_1,g_2\in G$. 
\end{lem}

The above statement follows directly from \cite[Lem 1.2]{GMRS98} when $H_1 = H_2$. Their proof also works in the general case, and for completeness we include it below.

\begin{proof} 

By Lemma~2.4 each conjugacy class of the intersection of conjugates $H_1$ and $H_2$ is represented by  the connected component of the fiber product $Y_1\otimes_X Y_2$ where $Y_1, Y_2$ are cores of $H_1, H_2$ with respect to $X$. Since $H_1, H_2$ have finite ranks, $Y_1, Y_2$ are finite graphs. Thus $Y_1\otimes_X Y_2$ is finite, and in particular, $Y_1\otimes_X Y_2$ has finitely many connected components (each representing a conjugacy class of the intersections of conjugates of $H_1$ and $H_2$).

Following \cite[Lem 1.2]{GMRS98} we will show that if $H_1, H_2$ are $K$-quasiconvex subgroups of a $\delta$-hyperbolic group $G$, then for every shortest representative $g$ of the double coset $H_1gH_2$, if $|g|\geq 2k+2\delta$, then $H_1\cap H_2^g$ is finite. That will imply our statement. Indeed, each intersection $H_1^{g_1}\cap H_2^{g_2}$ is conjugate to $H_1\cap H_2^g$, and there are only finitely many $g\in G$ such that $|g|<2K+2\delta$, hence only finitely many conjugacy classes of infinite subgroups of the form $H_1^{g_1}\cap H_2^{g_2}$.

Fix a double coset $H_1gH_2$ and assume that every representative $g$ of this double coset has length $|g|\geq  2k+2\delta$.
Since the cardinality of a subgroup $H_1\cap g^{-1}H_2g$ is invariant under conjugation, we can assume that $g$ is the shortest representative of the coset $gH_1$. Let $h\in H_1\cap g^{-1}H_2g$, and let $h_0\in H_2$ satisfy $h = g^{-1}h_0g$.
Consider a quadrilateral in the Cayley graph of $G$ at points $1, g^{-1}, g^{-1}h_0, h=g^{-1}h_0g$ with geodesic path $p_h$ going from $1$ to $h$, $p_1$ going from $1$ to $g^{-1}$, $p_{h_0}$ going from $g^{-1}h_0$, and $p_2$ going from $g^{-1}h_0$ to $g^{-1}h_0g = h$.  We will denote the label of a path $p$ by $Lab(p)$, the standard distance in the Cayley graph by $d(\cdot, \cdot)$, and the length of a geodesic by $|\cdot|$, and a path $x$ with reversed orientation by $\overline x$.

Let $v$ be a vertex on the path $p_h$ which is as close to the middle as possible, in particular $|d(1, v)-d(v,h)|\leq 1$. Let $q$ be an initial subpath of $p_h$ going from $1$ to $v$. Since $H_1$ is $K$-quasiconvex and $h\in H_1$, there exists a vertex at distance at most $K$ away from $v$ which belongs to $H_1$, let $s$ be a path from $v$ to that vertex. Let $t$ be a shortest path from $v$ to $p_{h_0}$, and let $w$ denote its endpoint. Since $H_2$ is quasiconvex and $h_0\in H_2$,  there exists an element of the coset $g^{-1}H_2$ at most $K$ away from $w$, let $s'$ be a path from that vertex to $w$.

We have $g = Lab(q'\overline{s'})Lab(s'\overline{t}s)Lab(\overline{s}\overline{q})$, and we have $Lab(q'\overline{s'})\in H_2$, $Lab(s'\overline{t}s)\in H_2gH_1$, and $Lab(\overline{s}\overline{q})\in H_1$. By our assumption $|Lab(s'\overline{t}s)|> 2K+2\delta$, so $|t|\geq |Lab(s'\overline{t}s)| - |s| - |s'| > 2\delta$. Since every point of the quadrilateral is contained in $2\delta$-neighborhood of the other three sides, we conclude that $v\in N_{2\delta}(p_1\cup p_2)$. 

Suppose that $v\in N_{2\delta}(p_1)$, and let $u$ be a vertex of $p_1$ such that $d(v,u)\leq 2\delta$ and let $y$ be a geodesic from $u$ to $v$. Let $p_1 = p_1'p_1''$ where $p_1'$ ends at $u$. Since $g = Lab(\overline{p_1}) = Lab(\overline{p_1''}ys)Lab(\overline{s}\overline{q})$ and $g$ is a minimal length representative of $gH_1$, we have  $|g| = |p_1'|+|p_1''| \leq |p_1''|+|y| + |s|$, and so $|p_1'|\leq |y|+|s|\leq 2\delta+K$. Thus $|q|\leq |y|_|p_1'|\leq 4\delta+K$, and so $|p_h| \leq 2|q|+2\leq 8\delta + 2K + 2$. We have just shown that for every $h\in H_1\cap H_2^g$ the length $|h| \leq 2|q|+2\leq 8\delta$, which implies that $H_1\cap H_2^g$ is a finite group.
\end{proof}

\section{Finite stature}\label{sec: finite stature}
\begin{defn}[{\cite[Defn 1.1]{HuangWiseStature19}}]
 Let $G$ be a group and let $\Omega = \{H_{\lambda}\}_{\lambda\in \Lambda}$ be a collection of subgroups
of $G$. Then $(G, \Omega)$ has \emph{finite stature} if for each $H \in\Omega$, there are finitely many $H$-conjugacy classes of infinite subgroups of form $H \cap C$, where $C$ is an intersection
of (possibly infinitely many) $G$-conjugates of elements of $\Omega$.
\end{defn}
The main result of \cite{HuangWiseStature19} is the following.
\begin{thm}[{\cite[Thm 1.3]{HuangWiseStature19}}]
Let $G$ be the fundamental group of a graph of groups with finite underlying graph $\Gamma$. Suppose that
\begin{enumerate}
\item each  $G_v$ for $v\in V(\Gamma)$ is a hyperbolic, virtually compact special group,
\item each $G_e$ for $e\in E(\Gamma)$ is quasiconvex in its vertex groups,
\item $(G, \{G_v\}_{v\in V(\Gamma)})$ has finite stature.
\end{enumerate}
Then each quasiconvex subgroup of a vertex group of $G$ is separable in $G$. In particular, $G$ is residually finite.
\end{thm}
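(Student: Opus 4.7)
The plan is to prove residual finiteness by showing that for every nontrivial $g \in G$ there is a finite-index subgroup missing $g$. Passing to a graph of spaces $X$ realizing the splitting, with vertex spaces $X_v$ and edge spaces $X_e$ satisfying $\pi_1(X_v)=G_v$ and $\pi_1(X_e)=G_e$, I aim to construct a finite-sheeted cover $\hat X \to X$ in which $g$ fails to close up to a loop at the basepoint.

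First I would exploit hypothesis (1) to arrange, after passing to a finite-sheeted cover of $X$ (which preserves the residual-finiteness question for $G$), that each $X_v$ is a compact special cube complex and each edge inclusion $X_e \hookrightarrow X_v$ is a local isometry onto a convex subcomplex; this uses the convex-core and canonical-completion machinery of Haglund--Wise together with the quasiconvexity of edge groups from hypothesis (2). Given $g$, I would fix its normal form along a path in the Bass--Serre tree and, at each vertex $v$ of that path, use virtual specialness of $G_v$ together with separability of quasiconvex subgroups to produce a finite-sheeted cover of $X_v$ in which the element of $G_v$ appearing in the normal form is separated from the incident edge subgroups and from all their intersections with conjugates of neighbouring vertex groups.

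The main obstacle, and precisely the point at which finite stature is indispensable, is gluing these local finite covers into a coherent finite cover of $X$: the covers at two vertices incident to a common edge $e$ must induce the \emph{same} finite cover of $X_e$, and in general this matching problem has no finite solution. Finite stature supplies the control: for each vertex $v$ there are only finitely many $G_v$-conjugacy classes of intersections $G_v \cap C$ where $C$ ranges over intersections of conjugates of vertex groups, so there is a finite universal list of subgroups of $G_v$ relative to which the local canonical completion must be performed. The local covers can then be uniformly enlarged using this finite list so that their restrictions to edge spaces agree, producing a global finite cover of $X$ separating $g$ from the identity. Varying $g$ yields residual finiteness of $G$.
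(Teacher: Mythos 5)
This statement is not proved in the paper at all: it is quoted verbatim as \cite[Thm 1.3]{HuangWiseStature19}, so there is no internal proof to compare against, and your proposal must stand or fall on its own. As written it has two genuine gaps. First, your opening move --- passing to a finite-sheeted cover of $X$ so that every vertex space becomes compact special and every edge inclusion a convex local isometry --- is not available. Virtual specialness of each $G_v$ gives a finite-index subgroup of each vertex group \emph{separately}, but promoting this local data to a single finite cover of the whole graph of spaces $X$ is exactly the matching problem you defer to the end of the argument; assuming such a cover exists at the outset is circular. A finite-index subgroup of $G$ induces finite-index subgroups of the $G_v$, but prescribing finite-index subgroups of the $G_v$ does not conversely produce a finite-index subgroup of $G$.

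Second, the concluding step --- ``the local covers can then be uniformly enlarged using this finite list so that their restrictions to edge spaces agree'' --- is an assertion of precisely the theorem's content, not a proof of it. The obstruction is that enlarging the cover at one vertex changes the induced covers of its incident edge spaces, which forces changes at the neighbouring vertices, and this propagates; nothing in your sketch shows the process terminates. Finite stature does not act as a one-shot ``finite universal list'' against which canonical completion can be performed. In Huang--Wise's actual argument it is the well-foundedness device for an induction on the partially ordered collection of intersections of conjugates of vertex groups (their ``big trees''/induced fibered collections): one first establishes separability for the deepest-level intersections, where specialness and quasiconvex-subgroup separability apply directly, and then combines covers level by level, with finite stature guaranteeing finitely many conjugacy classes at each level so the induction closes. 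Your sketch identifies the right ingredients (canonical completion, quasiconvexity, finite stature as a finiteness device) but omits the inductive architecture that makes the gluing converge, which is where the entire difficulty of the theorem lives.
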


In particular, the first two conditions are automatically satisfied for any finite graph of finite rank free groups, and free groups are locally quasi-convex.
\begin{cor}\label{cor: graphs of free groups}
Let $G$ be the fundamental group of a graph of finite rank free groups. If $(G, \{G_v\}_{v\in V(\Gamma)})$ has finite stature, then every finitely generated subgroup of a vertex group of $G$ is separable. In particular, $G$ is residually finite.
\end{cor}

We also note the following characterization of finite stature in terms of edge stabilizers in the action of $G$ on the Bass-Serre tree associated to the splitting. All the stabilizers considered in this paper are \emph{pointwise} stabilizers.

\begin{lem}[{\cite[Lem 3.9, Lem 3.19]{HuangWiseStature19}}]\label{lem:HuangWise stabilizer lemma}
Let $T$ be the Bass-Serre tree of the splitting of $G$ as a graph of groups with the underlying graph $\Gamma$. Then $(G, \{G_v\}_{v\in V(\Gamma)})$ has finite stature if and only if for each $v\in V(\Gamma)$, there are only finitely many $G_v$-conjugacy classes of groups of the form $G_v\cap\bigcap_{e\in E} \Stab(e)$ where $E\subseteq E(T)$. 

Moreover, if all the vertex groups are hyperbolic and edge groups are quasiconvex, then it suffices to only consider finite subsets $E\subseteq E(T)$.
\end{lem}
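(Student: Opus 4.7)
The plan is to build a dictionary between intersections of edge stabilizers in the Bass--Serre tree $T$ and intersections of $G$-conjugates of vertex groups, using the standard tree fact that $\Stab(e)=\Stab(\tau(e))\cap\Stab(\tau(\bar e))$ for every edge $e$, and that the pointwise stabilizer of a set $U\subseteq V(T)$ equals the stabilizer of its convex hull subtree, which in turn equals $\bigcap_{e\in E(\mathrm{Hull}(U))}\Stab(e)$. Combined with the Bass--Serre identification of vertex stabilizers with $G$-conjugates of the $G_v$, this puts the two families of subgroups in bijection.

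First I would fix $v\in V(\Gamma)$ and a lift $\tilde v\in V(T)$ with $\Stab(\tilde v)=G_v$. For any intersection $C=\bigcap_i H_i^{g_i}$ of $G$-conjugates of elements of $\Omega$, each factor is some $\Stab(u_i)$, so $G_v\cap C=\bigcap_{u\in\{\tilde v\}\cup\{u_i\}}\Stab(u)=\bigcap_{e\in E_0}\Stab(e)$ where $E_0$ is the edge set of the convex hull of $\{\tilde v\}\cup\{u_i\}$ in $T$. Conversely, for any $E\subseteq E(T)$, each $\Stab(e)$ is already the intersection of the two vertex stabilizers at its endpoints, so $G_v\cap\bigcap_{e\in E}\Stab(e)$ has the form $G_v\cap C$. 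Both assignments are equivariant under the $G_v$-action by conjugation (which fixes $\tilde v$), so conjugacy classes on one side biject with conjugacy classes on the other, yielding the ``iff.''

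For the moreover statement, suppose $L=G_v\cap\bigcap_{e\in E}\Stab(e)$ is infinite; replacing $E$ by the edge set of the subtree $\Fix(L)\supseteq E$ (still yielding $L$) reduces the task to showing $\Fix(L)$ is finite. Hyperbolicity of $G_v$ (bounded torsion) gives $L$ an infinite-order element $g$, and the Gitik--Mitra--Rips--Sageev finite-height theorem bounds the diameter of $\Fix(g)\supseteq\Fix(L)$. Since $T$ is generally not locally finite, to conclude $\Fix(L)$ is a finite subtree I would bound, at each $v'\in\Fix(L)$, the number of edges $e$ at $v'$ with $L\subseteq\Stab(e)$: these correspond to cosets $\alpha G_{e'}$ inside $\Stab(v')$ with $\alpha^{-1}L\alpha\subseteq G_{e'}$, and infinitely many of them would produce more than a finite-height's worth of essentially distinct conjugates of $G_{e'}$ with common intersection containing the infinite $L$, contradicting GMRS.

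The hard part will be this moreover statement; specifically, handling infinite-valence vertices of $T$. The crucial input is the GMRS package --- finite height and finite-index commensurator for quasiconvex subgroups of hyperbolic groups --- which forces $\Fix(L)$ to be locally finite whenever $L$ is infinite.
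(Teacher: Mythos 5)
First, a point of comparison: the paper does not prove this lemma at all --- it is quoted from \cite{HuangWiseStature19}, so there is no ``paper proof'' to match your route against, and your proposal has to stand on its own. The first half of your argument (the ``if and only if'') does: the dictionary via $\Stab(e)=\Stab(\tau(e))\cap\Stab(\tau(\bar e))$, stabilizers of convex hulls, the Bass--Serre identification of vertex stabilizers with conjugates of the $G_v$, and $G_v$-equivariance of the correspondence is the standard argument and is correct.

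The ``moreover'' part, however, contains a genuine gap: you reduce it to showing that $\Fix(L)$ is a finite subtree, and that statement is \emph{false} under the stated hypotheses. Take $G=F_2\rtimes_\phi\Z$, the mapping torus of an automorphism $\phi$ of $F_2$, with its HNN splitting over $F_2$: the vertex group $F_2$ is hyperbolic, and the edge group equals the vertex group, hence is quasiconvex in it, so the hypotheses of the ``moreover'' hold. The Bass--Serre tree is a line, every edge and vertex stabilizer equals the normal fiber $F_2$, and so $L=F_2$ --- and indeed $\Fix(g)$ for any $1\neq g\in F_2$ --- fixes the entire line; $\Fix(L)$ has infinite diameter. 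The flaw in the GMRS step is that finite height is available only \emph{inside each vertex group}, where the edge groups are assumed quasiconvex; it is not available in $G$, since $G$ is not assumed hyperbolic and the edge groups are not assumed quasiconvex in $G$ (in the example the fiber is normal of infinite index, hence not quasiconvex in $G$ even when $\phi$ is atoroidal and $G$ is hyperbolic). The conjugates of edge groups appearing along a long path in $\Fix(g)$ are conjugates by elements lying in no common vertex group, and need not be essentially distinct, so GMRS places no bound on their number or on the diameter. What the ``moreover'' actually requires is weaker and different: that an infinite $L=G_v\cap\bigcap_{e\in E}\Stab(e)$ already equals $G_v\cap\bigcap_{e\in E'}\Stab(e)$ for some \emph{finite} $E'$, even though $\Fix(L)$ may be an infinite tree. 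Your valence bound (finitely many edges of $\Fix(L)$ at each vertex, by finite height inside that vertex's stabilizer) is correct and is a real ingredient of such a proof, but you still need a stabilization argument for the descending chain $G_v\cap\Stab\bigl(\Fix(L)\cap B_n(\tilde v)\bigr)$ as $n\to\infty$, and this cannot be a formal chain-condition argument: hyperbolic groups contain strictly descending chains of quasiconvex subgroups with infinite intersection (e.g.\ $\langle a,b^{n!}\rangle$ in $F_2$), so the tree and conjugacy structure must be used. That missing stabilization step is the actual content of the ``moreover'' in Huang--Wise.
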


We note that in the above statement, we can identify $G_v$ with $\Stab(\tilde v)$ for some fixed lift  $\tilde v\in V(T)$ of $v$. In fact, every conjugate of a vertex group of $G$ can be identified with $\Stab(\tilde v)$ for some $\tilde v\in V(T)$. We explain in more detail, how one can think of the intersections of conjugates of vertex groups. 

We will denote the pointwise stabilizer of a path $\rho$ in $T$ by $\Stab(\rho)$, i.e.\ $\Stab(\rho) = \bigcap_{e\in \rho}\Stab(e)$. Using the identification of $G_v$ with $\Stab(\tilde v)$, we can view $\Stab(\rho)$ as a subgroup of $G_v$ if $\tilde v$ is contained in $\rho$. To emphasize that, we will denote such a subgroup by $G_v\cap \Stab(\rho)$.
If $\rho, \rho'$ both pass through $\tilde v$ and $\rho\subseteq \rho'$, then $G_v\cap\Stab(\rho')\subseteq G_v\cap\Stab(\rho)$.

\begin{prop}\label{prop: conjugacy classes of path stabilizers} 
Let $G$ be a graph of $\delta$-hyperbolic groups with quasiconvex edge groups,
and let $T$ be its Bass-Serve tree. Then $(G, \{G_v\}_{v\in V(\Gamma)})$ has finite stature if and only if there are only finitely many $G_v$-conjugacy classes of groups of the form $G_{v}\cap \Stab(\rho)$ where $\rho$ is a finite path in $T$ passing through $\tilde v$.
\end{prop}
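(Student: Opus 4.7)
The plan is to prove the two directions of the equivalence separately: the forward implication is almost immediate from Lemma~\ref{lem:HuangWise stabilizer lemma}, while the reverse implication requires reducing finite edge subsets to finite paths through $\tilde v$ and then bounding intersections of path stabilizers using Lemma~\ref{lem: finitely many conjugacy classes of intersections}.

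For the forward direction, assume $(G,\{G_v\})$ has finite stature. By Lemma~\ref{lem:HuangWise stabilizer lemma}, for each $v \in V(\Gamma)$ there are finitely many conjugacy classes of subgroups of the form $G_v \cap \bigcap_{e \in E}\Stab(e)$ with $E \subseteq E(T)$. Specializing $E$ to the edge set of a finite path $\rho$ through $\tilde v$ exhibits $G_{\tilde v} \cap \Stab(\rho)$ as a special case of these subgroups, and since $G_v$-conjugacy is finer than $G$-conjugacy and $V(\Gamma)$ is finite, one obtains finitely many $G$-conjugacy classes of path stabilizers overall.

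For the reverse direction, I first invoke the moreover clause of Lemma~\ref{lem:HuangWise stabilizer lemma}: since the vertex groups are finite rank free (hence hyperbolic) and the edge groups are finite rank (hence quasiconvex in their free vertex groups), it suffices to consider finite $E \subseteq E(T)$. For such $E$, let $\tau$ be the convex hull of $E \cup \{\tilde v\}$ in $T$, a finite subtree through $\tilde v$. Since a simplicial tree automorphism that fixes two vertices also fixes the geodesic between them pointwise, $G_v \cap \bigcap_{e \in E}\Stab(e) = \Stab(\tau)$. I then write $\tau$ as the union of paths $\rho_1,\dots,\rho_k$ from $\tilde v$ to the leaves of $\tau$, so that $\Stab(\tau) = \bigcap_{i=1}^{k}\Stab(\rho_i)$, reducing the problem to counting conjugacy classes of finite intersections of path stabilizers based at $\tilde v$.

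Each $\Stab(\rho_i)$ is a finite rank subgroup of $G_v$ lying in one of finitely many $G$-conjugacy classes by hypothesis; picking representatives $Q_1,\dots,Q_N \subseteq G_v$, every $\Stab(\tau)$ takes the form $\bigcap_i h_i Q_{l_i} h_i^{-1}$. The plan is to apply Lemma~\ref{lem: finitely many conjugacy classes of intersections} iteratively on pairs to bound the conjugacy classes of such intersections. The main obstacle is obtaining a bound independent of the number $k$ of factors, since naive iteration produces a bound that grows with $k$. To overcome this, I would use the fiber product description from Lemma~\ref{lem:fiber product}: each intersection is realized as a connected component of the iterated fiber product of the finite cores $Y_l \to X_v$ representing $Q_1,\dots,Q_N$, and I would show that, as $k$ varies, only finitely many isomorphism types of such components occur. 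Controlling this stabilization is the technical heart of the reverse implication.
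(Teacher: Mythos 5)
Your route is the same as the paper's: the forward direction by specializing Lemma~\ref{lem:HuangWise stabilizer lemma} to edge sets of paths through $\tilde v$, and the reverse direction by invoking its ``moreover'' clause (the vertex groups, being finite rank free, are hyperbolic and locally quasiconvex) and then rewriting $G_v\cap\bigcap_{e\in E}\Stab(e)$, for finite $E$, as a finite intersection of stabilizers of paths through $\tilde v$. Your convex-hull-and-leaves decomposition is the same as the paper's decomposition $G_v\cap\bigcap_{e\in E}\Stab(e)=\bigcap_{e\in E}\left(G_v\cap\Stab(\rho_e)\right)$, where $\rho_e$ is the minimal path containing $\tilde v$ and the edge $e$. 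Up to this point everything you write is correct.

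The gap is your last step, which is a plan rather than a proof. After reducing to intersections $\bigcap_{i=1}^{k}h_iQ_{l_i}h_i^{-1}$ of conjugates of finitely many representatives, the reverse implication \emph{is} precisely the claim that such intersections fall into finitely many conjugacy classes uniformly in $k$; you correctly observe that iterating Lemma~\ref{lem: finitely many conjugacy classes of intersections} pairwise only gives a bound growing with $k$, and you then defer the needed stabilization of iterated fiber products (``I would show\dots''). That deferred claim is the entire content of this direction, so the proposal does not establish the proposition. For comparison, the paper closes this step by appealing directly to Lemma~\ref{lem: finitely many conjugacy classes of intersections}, read as applying to intersections of arbitrarily many groups drawn from the finitely many classes of path stabilizers (so the uniformity in the number of factors that worries you is treated as immediate there). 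A concrete way to finish your argument is via finite height: finitely generated subgroups of a finite rank free group are quasiconvex, hence have finite height by \cite{GMRS98}, so there is a uniform $M$ such that any nontrivial (equivalently infinite) intersection of pairwise distinct conjugates of the representatives $Q_1,\dots,Q_N$ involves at most $M$ of them; every intersection therefore equals either the trivial group or an intersection of at most $M$ conjugates, and for each bounded number of factors the fiber-product argument of Lemma~\ref{lem:fiber product} and Lemma~\ref{lem: finitely many conjugacy classes of intersections} yields finitely many conjugacy classes. Since finite stature only counts infinite subgroups, the trivial class is harmless. Supplying this (or an equivalent uniformity argument) is exactly what your ``technical heart'' must contain.
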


\begin{proof} Lemma~\ref{lem:HuangWise stabilizer lemma} implies that it suffices to show that there are finitely many conjugacy classes of groups of the form $G_{v}\cap\bigcap_{e\in E} \Stab(e)$ for $v\in V(\Gamma)$ and finite $E\subseteq E(T)$, if and only if there are only finitely many conjugacy classes of groups of the form $G_v\cap \Stab(\rho)$ where $\rho$ is a finite path that passes through $v$.

The forward implication is immediate. Let us assume there are only finitely many conjugacy classes of groups of the form $G_v\cap \Stab(\rho)$ where $\rho$ is a finite path passing through $\tilde v$.
The group $G_v\cap\bigcap_{e\in E} \Stab(e)$ is exactly the subgroup of $G$ stabilizing all the edges in $E$ (and in particular stabilizing $\tilde v$ which is an endpoint of some $e\in E$). In particular, $G_v\cap\bigcap_{e\in E} \Stab(e)$ can be realized as the subgroup of $G_v$ stabilizing the union of paths $\{\rho_e\}_{e\in E}$ where $\rho_e$ is the minimal path containing $v$ and the edge $e$, i.e.\ $G_v\cap\bigcap_{e\in E} \Stab(e) = G_v \cap \bigcap_{e\in E}\Stab (\rho_e) =  \bigcap_{e\in E}\left(G_v \cap \Stab (\rho_e)\right)$. Since there are only finitely many conjugacy classes of subgroups of the form $G_v \cap \Stab (\rho)$ and all such subgroups are quasiconvex in $G_v$ as finite intersections of quasiconvex subgroup, Lemma~\ref{lem: finitely many conjugacy classes of intersections} implies that there are also only finitely many conjugacy classes of their intersections.\end{proof}

We finish this section with the following observation that will allow us to work with certain finite index subgroups of the considered groups.

\begin{prop}[Passing to finite index supergroups]\label{prop:passing to finite index}
Let $G$ split as a graph of groups. If $G'$ is a finite index subgroup of $G$ such that $G'$ has finite stature with respect to the vertex groups in the induced graph of groups decomposition, then $G$ has finite stature with respect to its vertex groups.
 \end{prop}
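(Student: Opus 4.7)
My plan is to argue via the Bass-Serre tree $T$ of the splitting of $G$. Since $G'$ is a subgroup of $G$, it also acts on $T$ by restriction, and the induced splitting of $G'$ has the same Bass-Serre tree $T$; its vertex stabilizers are $\Stab_{G'}(\tilde v)=\Stab_G(\tilde v)\cap G'$, each of index at most $[G:G']$ in $\Stab_G(\tilde v)$. By Lemma~\ref{lem:HuangWise stabilizer lemma}, establishing finite stature for $G$ reduces to showing that for each $v\in V(\Gamma)$ there are only finitely many $G$-conjugacy classes of subgroups of the form
$$H \;=\; G_v\cap\bigcap_{e\in E}\Stab_G(e),\qquad E\subseteq E(T).$$

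For any such $H$, the intersection $K:=H\cap G'$ equals $(G_v\cap G')\cap\bigcap_{e\in E}\Stab_{G'}(e)$, which is of the same form as $H$ but for the induced splitting of $G'$. By the finite stature hypothesis for $G'$, Lemma~\ref{lem:HuangWise stabilizer lemma} applied to $G'$ therefore yields finitely many $G'$-conjugacy classes of such $K$, and hence finitely many $G$-conjugacy classes. It remains to show that, for each $G$-conjugacy class of $K$, the set of $H$ with $H\cap G'=K$ splits into finitely many $G$-conjugacy classes.

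This finite-fiber statement is where I expect the main technical work to lie. First I would replace $G'$ by its normal core in $G$; this requires a companion check that finite stature is preserved under finite-index subgroups of $G'$, which I would argue analogously via Lemma~\ref{lem:HuangWise stabilizer lemma}. With $G'\trianglelefteq G$ in hand, $K\trianglelefteq H$ and $H/K$ injects as a finite subgroup of $G/G'$ of order at most $[G:G']$, so $H\subseteq N_G(K)$ with $|H/K|\leq[G:G']$. Up to $N_G(K)$-conjugacy (which coincides with $G$-conjugacy within the fiber), the task is to bound the number of such $H$. The key geometric input is that $H=\Stab_G(\{\tilde v\}\cup E)$ pointwise stabilizes a subtree of the fixed subtree $T^K$ of $K$ in $T$; combined with the cocompactness of the $G$-action on $T$ and the bound on $|H/K|$, one argues that only finitely many $H$ arise up to $N_G(K)$-conjugation. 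Applying Lemma~\ref{lem:HuangWise stabilizer lemma} then delivers finite stature for $G$.
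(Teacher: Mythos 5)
Your reduction via Lemma~\ref{lem:HuangWise stabilizer lemma} starts correctly: the induced splitting of $G'$ does have Bass--Serre tree $T$, each $K=H\cap G'=(G_v\cap G')\cap\bigcap_{e\in E}\Stab_{G'}(e)$ is of the required form for $G'$, and finitely many $G'$-conjugacy classes of such $K$ gives finitely many $G$-conjugacy classes. The genuine gap is the ``finite-fiber'' step that you yourself flag, and the inputs you cite do not close it. Knowing $K\trianglelefteq H\le N_G(K)$ with $|H/K|\le[G:G']$ only says that $H/K$ is a finite subgroup of $N_G(K)/K$ of bounded order; an abstract group can have infinitely many conjugacy classes of such subgroups (already an infinite direct sum of $\Z/2$'s does), so some further structure must be used, and nothing in your sketch supplies it. The geometric input you invoke does not transfer either: cocompactness of the $G$-action on $T$ does \emph{not} imply that $N_G(K)$ acts cocompactly on the fixed tree $T^K$, so there is no a priori finiteness for the subtrees of $T^K$ whose pointwise $G$-stabilizers realize the various $H$'s, nor for those stabilizers up to $N_G(K)$-conjugacy. (By contrast, the companion claim that finite stature descends to the normal core of $G'$ is a routine coset argument and is the unproblematic part of your plan.) The difficulty is structural: the subgroups counted by Lemma~\ref{lem:HuangWise stabilizer lemma} for $G$ and for $G'$ are different objects, so finiteness of one count does not formally yield finiteness of the other, and your argument stalls exactly where that discrepancy must be bridged.

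This is also precisely what the paper's proof avoids. The paper does not count stabilizer subgroups at all: it cites the characterization of finite stature by the number of orbits of \emph{based big trees} (\cite[Def 3.7, Lem 3.9]{HuangWiseStature19}). Based big trees are objects in $T$ itself, the same collection whether one acts by $G$ or by $G'$, and every $G$-orbit is a union of $G'$-orbits; hence finitely many $G'$-orbits trivially gives finitely many $G$-orbits, and the proposition follows in one line. To make your route work you would essentially have to reprove that lemma, i.e.\ replace the count of subgroups by a count of fixed trees, which is exactly the device that makes the finite-index transfer immediate.
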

\begin{proof}
This follows immediately from the characterization of finite stature in terms of the of number of orbits of based big trees in the sense of \cite[Def 3.7]{HuangWiseStature19}, see \cite[Lem 3.9]{HuangWiseStature19}.
\end{proof}

\section{Graphs of free groups}\label{sec: graphs of groups}
\subsection{Amalgamated products $A*_CB$ where $[B:C]=2$}
Let $G = A*_CB$ be an amalgamated product of finite rank free groups, where $[B:C] = 2$. Let $b\in B-C$, i.e.\ $bC$ is the nontrivial coset of $C/B$.

Let $T$ be the Bass-Serre tree of $G$ (metrized so that each edge of $T$ has length $1$).
The vertices of $T$ are of two kinds: infinite valence \emph{$A$-vertices}, corresponding to conjugates of $A$, and valence two  \emph{$B$-vertices} corresponding to conjugates of $B$. 
The edges of $T$ correspond to conjugates of $C$. We use the convention where $C^g$ denotes the conjugate $g^{-1}Cg$, so that $(C^g)^h = C^{gh}$. 

We start with the following observation.

\begin{lem}\label{lem:stabilizers of adjacent edges}
 An element $g\in G$ stabilizes an edge $e$ of $T$ if and only if $g$ stabilizes an adjacent edge $e'$ meeting $e$ at a $B$-vertex. 
\end{lem}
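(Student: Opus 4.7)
The plan is to exploit the elementary fact that a subgroup of index two is automatically normal, which together with the local structure of the Bass-Serre tree at a $B$-vertex makes the lemma almost immediate.

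First I would observe that since $[B:C]=2$, the subgroup $C$ is normal in $B$, and hence for every $h\in G$ the conjugate $C^h$ is normal in $B^h$. Next I would unpack the local structure at a $B$-vertex of $T$: any $B$-vertex $\tilde v$ has stabilizer $B^h$ for some $h\in G$, and its valence equals $[B:C]=2$. Thus $\tilde v$ has exactly two incident edges $e_1, e_2$, and by the standard Bass-Serre correspondence their stabilizers are, up to conjugation inside $B^h$, the stabilizer $C^h$ of the edge connecting $\tilde v$ to a chosen $A$-neighbor. Concretely, if $b_1, b_2\in B^h$ are representatives of the two cosets of $C^h$, then $\Stab(e_i)=b_iC^hb_i^{-1}$.

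The key step is now just the normality observation: since $C^h$ is normal in $B^h$, we have $b_1C^hb_1^{-1}=C^h=b_2C^hb_2^{-1}$, so $\Stab(e_1)=\Stab(e_2)$. Consequently, if $g\in G$ stabilizes one of the edges at $\tilde v$ it automatically stabilizes the other. Since any edge $e$ of $T$ sits at exactly one $B$-vertex (the two endpoints of $e$ are of different types, one $A$-type and one $B$-type), this already gives both directions of the equivalence.

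I do not foresee a serious obstacle here: the only thing to be careful about is correctly invoking the convention $C^g=gCg^{-1}$ fixed earlier, and checking that the two edges at a $B$-vertex really correspond to the two cosets of $C^h$ in $B^h$ rather than, say, to edges that go to the same $A$-neighbor (they do not, because distinct cosets of $C$ in $B$ yield distinct edges in $T$).
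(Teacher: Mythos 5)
Your proof is correct, but it follows a genuinely different route than the paper's. The paper's proof is a one-line combinatorial argument: an element $g$ stabilizing $e$ in particular fixes the $B$-vertex $\tilde v$ shared by $e$ and $e'$, hence permutes the set of edges incident to $\tilde v$; since that vertex has valence $[B:C]=2$, this set is $\{e,e'\}$, and a bijection of a two-element set fixing $e$ must also fix $e'$ (pointwise, since $g$ already fixes the common endpoint $\tilde v$ and the action on the Bass-Serre tree is without inversions). No normality and no coset model of the tree is invoked. Your argument instead computes the stabilizers: $[B:C]=2$ forces $C\trianglelefteq B$, so the two edges at a $B$-vertex with stabilizer $B^h$, corresponding to the two cosets of $C^h$ in $B^h$, have stabilizers $(C^h)^{b_i}=C^h$, i.e.\ literally the same group. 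The two mechanisms are different uses of the hypothesis $[B:C]=2$: the paper turns it into a valence count, you turn it into normality. The paper is aware of your variant --- the caption of Figure~\ref{fig: length 6 path} remarks that the equality of stabilizers of consecutive edges at a $B$-vertex ``also follows from the fact that $C^b=C$, since $[B:C]=2$'' --- and your version has the small added benefit of identifying the common stabilizer as the explicit conjugate $C^h$, which is the form in which the fact is exploited in Lemma~\ref{lem:stabilizers as intersections}. The paper's version, on the other hand, is shorter and works verbatim for any tree action in which the relevant vertex has valence two, independently of any algebraic description of the stabilizers.
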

\begin{proof} 
Since the vertex incident to both $e$ and $e'$ has valence $2$, any element stabilizing one of the edges must stabilize the other one as well.
\end{proof}

\begin{remark}\label{rem: path stabilizers}
As a consequence of the lemma, we get that for every path $\rho'$ in $T$, $\Stab(\rho') = \Stab(\rho)$ where $\rho$ is the minimal path containing $\rho'$ that starts and ends at $A$-vertices.
\end{remark}
Thus we will only consider paths in $T$ starting and ending at $A$-vertices.
We continue measuring the length of paths with respect to the original metric on the tree, i.e.\ any two $A$-vertices are even distance away.

In the following lemma, we describe all the stabilizers of paths in $T$ joining two $A$-vertices. In our application, we will only need the statement for the paths of length at most $8$, so we give explicit description in those cases, but for completeness we also give the general statement for paths of arbitrary length. 

Let $\rho:[0,2\ell]\to T$ be a path joining two $A$-vertices. Since the length of $\rho$ is even, the middle point of $\rho$ is always a vertex in $T$. Depending on the parity of $\ell$, the middle vertex can be an $A$-vertex or a $B$-vertex. If $\ell$ is even, the middle vertex of $\rho$ is an $A$-vertex, and in the lemma below we will consider such paths where the middle vertex of $\rho$ is stabilized by $A$, and that the following vertex is stabilized by $B$ (see  initial subpath of length $4$ of the path in Figure~\ref{fig: length 6 path} for an example with $\ell=2$). If $\ell$ is odd, the middle vertex of $\rho$ is a $B$-vertex, and by conjugating $\Stab(\rho)$, and we will consider paths where the middle vertex is stabilized by $B$, and the vertex before is stabilized by $A$ (see Figure~\ref{fig: length 6 path} for an example where $\ell=3$). Note that those cases can be simultaneously described as satisfying $\Stab(\rho(2k)) = A$ and $\Stab(\rho(2k+1)) = B$ where $\ell=2k$ or $\ell = 2k+1$, depending on the parity of $\ell$.

\begin{lem}\label{lem:stabilizers as intersections}
Let $\rho:[0,2\ell]\to T$ be a length $2\ell$ combinatorial path in $T$ starting and ending at $A$-vertices.
Suppose that $\Stab(\rho(2k)) = A$ and $\Stab(\rho(2k+1)) = B$, where $\ell = 2k$ or $\ell=2k+1$ depending on the parity of $\ell$.

Then $\Stab(\rho)$ is of the form $K_\ell\subseteq C$ where: 
\begin{itemize} 
\item $K_1 = C$
\item $K_2 = C^{a_1} \cap C$ for some $a_1\in A$
\item $K_3 = C^{a_1}\cap C\cap C^{d_1b}$ for some $a_1, d_1\in A$
\item $K_4 = C^{a_2ba_1}\cap C^{a_1}\cap C \cap C^{d_1b}$ for some $a_1, a_2, d_1\in A$
\end{itemize}
and more generally, 
\begin{itemize} 
\item for $\ell = 2k+1$:\\
 $$K_{2k+1} =C^{a_kba_{k-1}b\dots ba_1}\cap C^{a_{k-1}b\dots ba_{1}}\cap \dots\cap C^{a_1}\cap C \cap C^{d_1b}\cap C^{d_2bd_1b}\cap \dots \cap C^{d_k\dots bd_{1}b}$$
for some $a_1, \dots, a_k, d_1, \dots, d_k\in A$;
\item for $\ell = 2k$:\\
$$K_{2k} =C^{a_kba_{k-1}b\dots ba_1}\cap C^{a_{k-1}b\dots ba_{1}}\cap \dots \cap C^{a_1}\cap C \cap C^{d_1b}\cap C^{d_2bd_1b}\cap \dots \cap C^{d_{k-1}\dots bd_{1b}}$$
 for some $a_1, \dots, a_k, d_1, \dots, d_{k-1}\in A$.
\end{itemize}
Additionally, we have the following, where $K_{\ell}$ and $K_{\ell}'$ denote two groups of the form as above (for possibly different choices of elements $a_i$'s and $d_i$'s).
\begin{itemize}
\item  if $\ell$ is odd, then $K_\ell = K_{\ell-1}\cap (K'_{\ell-1})^b$
\item  if $\ell$ is even, then $K_\ell = K_{\ell-1}\cap (K'_{\ell-1})^{a}$
\end{itemize}
\end{lem}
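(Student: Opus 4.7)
The plan is to compute $\Stab(\rho)$ directly from the walk along $\rho$ and then read off the stated form. The base case $\ell=1$ is immediate: a length-$2$ path consists of two edges meeting at a single $B$-vertex, and by Lemma~\ref{lem:stabilizers of adjacent edges} they have a common stabilizer, which is a $G$-conjugate of $C=K_1$.

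For general $\ell$, Lemma~\ref{lem:stabilizers of adjacent edges} lets me reduce $\Stab(\rho)$ to the intersection of the stabilizers of one edge per $B$-vertex of $\rho$, giving $\ell$ terms. After conjugating in $G$ I may assume that $\rho$ begins at the $A$-vertex with stabilizer $A$. Walking along $\rho$, at each $A$-vertex the outgoing edge is parametrised by a coset $\alpha_iC\in A/C$, and at each $B$-vertex one simply switches to the other edge, which multiplies the representative by $b$. The $j$-th chosen edge therefore has the form $w_j C$ with $w_j=\alpha_1 b\alpha_2 b\cdots b\alpha_j$ and $\alpha_i\in A$, so
\[
\Stab(\rho)=\bigcap_{j=1}^{\ell}C^{w_j}.
\]
To present this as a $G$-conjugate of some $K_\ell\subseteq C$, I would conjugate by $w_{j_0}^{-1}$ with central index $j_0=\lfloor \ell/2\rfloor+1$, which replaces the $j_0$-th term by $C$. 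For $j<j_0$ the exponent $w_{j_0}^{-1}w_j$ ends in a $b^{-1}$ that can be dropped using $C^{gb^{-1}}=C^g$ (normality of $C$ in $B$); substituting each remaining $b^{-1}=b\cdot b^{-2}$ and absorbing $b^{-2}\in C\subseteq A$ into the adjacent $A$-factor rewrites the exponent as $a_1' b a_2' b\cdots b a_m'$ with $m=j_0-j$ and $a_i'\in A$. For $j>j_0$, $w_{j_0}^{-1}w_j=b\alpha_{j_0+1}\cdots b\alpha_j$ is already of the dual shape $bd_1'b\cdots b d_m'$ with $m=j-j_0$. Relabelling yields precisely $K_\ell$, with $j_0-1$ ``a''-terms and $\ell-j_0$ ``d''-terms, matching $k$ and $k$ when $\ell=2k+1$, and $k$ and $k-1$ when $\ell=2k$.

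For the recursive identities, I would split $\rho$ into two overlapping subpaths $\rho_1,\rho_2$ of length $2(\ell-1)$, obtained by dropping two edges from opposite ends of $\rho$; then $\Stab(\rho)=\Stab(\rho_1)\cap\Stab(\rho_2)$ and, by the form already proved, each factor is a $G$-conjugate of some $K_{\ell-1}$. Working in the normalization that puts the central $A$-vertex of $K_\ell$ at $A$, the natural base of $\Stab(\rho_1)$ coincides with it while the natural base of $\Stab(\rho_2)$ lies one edge away along $\rho$. When $\ell$ is odd this one-edge offset lies in the coset $bC$, so after absorbing the $C$-part into the parameters of a suitable $K'_{\ell-1}$ one obtains $K_\ell=K_{\ell-1}\cap(K'_{\ell-1})^b$; when $\ell$ is even the offset lies in $A$, yielding $K_\ell=K_{\ell-1}\cap(K'_{\ell-1})^a$ for a suitable $a\in A$.

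The main obstacle is bookkeeping in the rewriting step: checking that the substitutions $b^{-1}=b\cdot b^{-2}$, combined with $C\triangleleft B$ and $b^2\in C\subseteq A$, reshape every $w_{j_0}^{-1}w_j$ into exactly the alternating $A$--$b$--$A$--$\cdots$ pattern demanded by the $K_\ell$ form, and that the offsets in the recursive decomposition can indeed be absorbed into the parameters of the new $K'_{\ell-1}$ without breaking the form. Nothing beyond $[B:C]=2$ (hence $C\triangleleft B$ and $b^2\in C\subseteq A$) is needed, but the parity case split and the index tracking have to be handled with care.
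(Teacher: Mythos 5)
Your proposal is correct and follows essentially the same route as the paper: conjugate so that the path is centered at its midpoint, read off the edge stabilizers as conjugates of $C$ by alternating words (using $C^b=C$ from $[B:C]=2$ and absorbing $b^{-2}\in C\subseteq A$ into adjacent $A$-letters), and obtain the recursion by removing a pair of edges from each end of $\rho$. The only differences are presentational --- you parametrize the walk from one endpoint and re-center algebraically where the paper normalizes the midpoint from the start, and you phrase the recursion as the overlapping-subpath decomposition $\Stab(\rho)=\Stab(\rho_1)\cap\Stab(\rho_2)$ where the paper verifies the same regrouping by explicit formulas.
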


\begin{proof} 
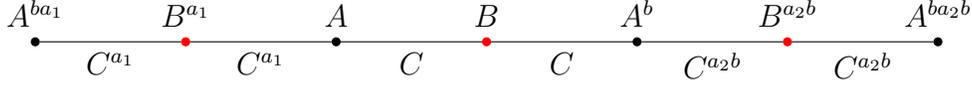
\begin{figure}
\begin{tikzpicture}
\node[circle, draw, fill, inner sep = 0pt,minimum width = 3pt, label=above:$A^{ba_1}$] (0) at (-4,0) {};
\node[circle, draw, fill, inner sep = 0pt,minimum width = 3pt, red, label=above:$B^{a_1}$] (1) at (-2,0) {};
\node[circle, draw, fill, inner sep = 0pt,minimum width = 3pt, label=above:$A$] (2) at (0,0) {};
\node[circle, draw, fill, inner sep = 0pt,minimum width = 3pt, red, label=above:$B$] (3) at (2,0) {};
\node[circle, draw, fill, inner sep = 0pt,minimum width = 3pt, label=above:$A^b$] (4) at (4,0) {};
\node[circle, draw, fill, inner sep = 0pt,minimum width = 3pt, red, label=above:$B^{d_1b}$] (5) at (6,0) {};
\node[circle, draw, fill, inner sep = 0pt,minimum width = 3pt, label=above:$A^{bd_1b}$] (6) at (8,0) {};
\draw (0) to node[below] {$C^{a_1}$} (1) to node[below] {$C^{a_1}$} (2) to node[below] {$C$} (3) to node[below] {$C$} (4) to node[below] {$C^{d_1b}$} (5) to node[below] {$C^{d_1b}$} (6);
\end{tikzpicture}
\caption{Every length $6$ path in the Bass-Serre tree of $A*_CB$ where \\ $[B:C] = 2$ is conjugate to the pictured path. The labels are the stabilizers. We note that two consecutive edges meeting at a $B$-vertex have the same stabilizers. See Lemma~\ref{lem:stabilizers as intersections}. Algebraically, this also follows from the fact that $C^b = C$, since $[B:C] = 2$. }\label{fig: length 6 path}\end{figure}

Since $\Stab(\rho) = \bigcap_{e\in \rho}\Stab(e)$, by analyzing the stabilizers of edges in $\rho$, we get the description of $\Stab(\rho)$ as required.

Let us now prove the second part of the statement. First assume that $\ell=2k+1$. Then 
\begin{align*}
K_{2k+1} = &(C^{a_kba_{k-1}b\dots ba_1}\cap C^{a_{k-1}b\dots ba_{1}}\cap \dots\cap C^{a_1}\cap C \cap C^{d_1b}\cap C^{d_2bd_1b}\cap \dots \cap C^{d_{k-1}\dots bd_{1}b})\cap \\
&\cap (C^{d_kbd_{k-1}b \dots bd_1}\cap \dots\cap C^{d_1}\cap C \cap C^{(a_1b^{-2})b}\cap C^{a_2b(a_1b^{-2})b}\cap \dots \cap C^{a_{k-1}b\dots (a_1b^{-2})b})^b
\end{align*}
We note that $a_1b^{-2}\in A$ since $b^2\in C$, so the expression above is indeed of the form $K_{2k}\cap (K'_{2k})^b$.
Similarly, when $\ell = 2k$, we get
\begin{align*}
K_{2k} = &(C^{a_{k-1}b\dots ba_{1}}\cap C^{a_{k-2}b\dots ba_{1}}\cap \dots\cap C^{a_1}\cap C \cap C^{d_1b}\cap C^{d_2bd_1b}\cap \dots \cap C^{d_{k-1}\dots bd_{1}b}) \cap \\
&\cap (C^{d_{k-2}\dots bd_{1}b a_1^{-1}}\cap \dots\cap C^{a_1^{-1}}\cap C\cap C^{a_2b}\cap \dots \cap C^{a_kb\dots ba_{2}b} )^{a_1}
\end{align*}
which gives as $K_{\ell} = K_{\ell-1}\cap (K'_{\ell-1})^{a}$ for $a=a_1$ as required.
\end{proof}

We emphasize that subgroups $K_{\ell}$ in the above statement are not uniquely defined, i.e.\ they depend on the choice of elements $a_i$ and $d_i$.

\subsection{Monochrome cycles preserving splittings}\label{sec:color respecting splittings}

We start with recalling the definition of a graph of spaces in the special cases where all the vertex and edge spaces are graphs.
A \emph{graph of graphs} $X(\Gamma)$ consist of the following data:
\begin{itemize}
\item a combinatorial graph $\Gamma$,
\item for every $v\in V(\Gamma)$, a metric graph $X_v$,
\item for every edge $e\in E(\Gamma)$, a metric graph $X_e$ such that $X_e \stackrel{\beta}{\simeq} X_{\bar e}$,
and an injective monotone map $\phi_e: X_e\to X_{\tau(e)}$.
\end{itemize}

Moreover, we say $X(\Gamma)$ is \emph{orientation preserving}, if all graphs $X_u$ for $u\in V(\Gamma)\cup E(\Gamma)$ are oriented, and maps $\phi_e$ are orientation preserving monotone maps (i.e.\ sending oriented paths to possibly trivial oriented paths).

We emphasize that we do not require $\phi_e$ to be a combinatorial map, but by Proposition~\ref{prop:factorization} we know that $\phi_e$ factors as the composition $X_e\to \overline X_{e} \to X_{\tau(e)}$ where the second map is a combinatorial immersion.

Let $[n]$ denote the set $\{1,\dots, n\}$. 
An \emph{edge coloring} of a metric graph $X$ is a maps $c:\{\text{1-cells of } X\}\to [n]$. 
We refer to $i\in [n]$ as \emph{colors}.
A cycle in a graph $X$ is \emph{monochrome} if each edge in the cycle has the same color. 
Suppose graphs $X, X'$ admit edge colorings $c,c'$ with colors $[n]$ respectively. A monotone map $\phi:X\to X'$ is \emph{color-preserving}, if $c'(\phi(e)) = c(e)$ for every $1$-cell $e$ of $X$. A \emph{color-preserving isomorphism} is a combinatorial map which is bijective on both vertex-sets and edge-sets, which is color-preserving.

\begin{defn}[Monochrome cycles preserving graph of graphs]
Fix $n\geq 1$ and for each $i\in [n] =\{1,\dots, n\}$ let $\ell_i\geq 1$.
Let $X(\Gamma)$ be a graph of graphs, where for each $u\in V(\Gamma)\cup E(\Gamma)$ there exists a coloring $c_u: \{\text{1-cells of }X_u\}\to [n]$, and if $u\in E(\Gamma)$ then $c_u = c_{\bar u}$.
A graph of graphs $X(\Gamma)$ is \emph{monochrome cycles preserving} if 
\begin{itemize}
\item for every $e\in E(\Gamma)$, $\phi_e$ is color-preserving, and 
\item for each $i\in [n]$ and each $u\in V(\Gamma)\sqcup E(\Gamma)$, the preimage $c_u^{-1}(i)\subseteq X_u$ is a disjoint union of embedded cycles,
\item for $e\in E(\Gamma)$, the map $\phi_e$ restricted to each cycle of color $i$ factors through a cycle of length $\ell_i$ in the factorization provided by Proposition~\ref{prop:factorization}.

\end{itemize}
\end{defn}

We can visualize such graphs of groups as having edges in vertex and edge graphs colored in a way that the induced colorings of edges in the edge graphs is consistent with respect to both adjacent vertex graphs. 
Note that in particular, each vertex and edge graph in a monochrome cycles preserving graph of graphs is a union of monochrome cycles.
The third condition can be thought of stating that each cycle of a given color in an edge graph has length $\ell_i$ in the metric induced by each vertex group. We note that this length does not need to correspond to the combinatorial length of that cycle, as the attaching maps $\phi_e$ do not need to be combinatorial. We make this (and more general) statement more precise in Lemma~\ref{lem:preserved length of colored cycles}. 
Instead of providing any examples now, we refer the reader to Section~\ref{sec: Artin} and splittings of Artin groups, induced by monochrome cycles preserving graph of graphs. They are the motivation for the above definition.

We will denote the associated graph of group by $G(\Gamma)$. 

\begin{lem}\label{lem:monochrome cycles}
Let $X(\Gamma)$ be a monochrome cycles preserving graph of groups.
\begin{enumerate}
\item For $j=1,2$, let $\overline Y_j\to X_v$ be a combinatorial immersion where for each color $i$ the subgraph of $\overline Y_i$ consisting of edges of color $i$ is a disjoint union of cycles of length $\ell_i$. Then for each color $i$ the subgraph of $\overline Y_1\otimes_{X_v}\overline Y_2$  consisting of edges of color $i$ is a disjoint union of cycles of length $\ell_i$.

\item Let $\psi: Y\to X_e$ be a combinatorial immersion. Let $\phi_e\cdot\psi: Y\to X_{\tau(e)}$ factor through $\overline Y$ and let $\phi_e\cdot\beta\cdot \psi: Y\to X_{\tau(\overline e)}$ factors through $\overline Y'$ in the factorization provided by Proposition~\ref{prop:factorization}. 
Then for each color $i$ the subgraph of $\overline Y$ consisting of edges of color $i$ is a disjoint union of cycles of length $\ell_i$, if and only if, for each color $i$ the subgraph of $\overline Y'$ consisting of edges of color $i$ is a disjoint union of cycles of length $\ell_i$.
\end{enumerate}
\end{lem}
\begin{proof}\item

\begin{enumerate}
\item We need to show that each $e$ of $Y_1\otimes_{X_v}Y_2$ of color $i$ is contained in a unique monochrome cycle of length $\ell_i$. For $j=1,2$, let $\pi_j:Y_1\otimes_{X_v}Y_2\to Y_j$ be the natural projection. Note that for $j=1,2$, $\pi_j(e)$ has color $i$ and by assumption it is contained in a unique monochrome cycle $C_j$ of length $\ell_i$. Thus $C_1, C_2$ lift to a monochrome cycle of length $\ell_i$ containing $e$.

\item 
For each color $i$ the subgraph of $\overline Y$ consisting of edges of color $i$ is necessarily a disjoint union of cycles and path segments, since $\overline Y$ embeds in some covering of $X_{\tau(e)}$. The same is also true for $Y$ for the same reason.

The map $X_e\to \overline X_e$ does not identify two vertices of $X_e$, which are both adjacent to edges of the same color, and consequently this property also holds for the map $Y\to \overline Y$. In particular, the subgraph of  $\overline Y$ consisting of edges of color $i$ has any path segments if and only if the subgraph of  $Y$ consisting of edges of color $i$ does. Thus the subgraph of  $\overline Y$ consisting of edges of color $i$ is a disjoint union of cycles if and only if the subgraph of  $\overline Y'$ consisting of edges of color $i$ is a disjoint union of cycles.

Finally, let $\overline C$ be a monochrome cycle of $\overline Y$ of color $i$, and let $C$ be the preimage of $\overline C$ in $Y$, which is also a monochrome cycle. Then $\overline C$ has length $\ell_i$ if and only if the map $\overline Y\to X_{\tau(e)}$ restricted to $\overline C$ is 1-1. This happens if and only if $Y\to X_e$ restricted to $C$ is 1-1. Thus each monochrome cycle of color $i$ in $\overline Y$ has length $\ell_i$ if and only if each monochrome cycle of color $i$ in $\overline Y'$ does.
\end{enumerate}
\end{proof}

In the next couple of Lemmas, we assume that $\rho\subseteq T$ is a path in the Bass-Serre tree of $G(\Gamma)$ passing through the vertex $\tilde v$, and an edge $\tilde e$ containing $\tilde v$. 
We identify the stabilizer $\Stab{\tilde v}$ with $G_v$ for some $v\in V(\Gamma)$, and the $\Stab(\tilde e)$ with $G_e$ for some $e \in E(\Gamma)$. 
We view the stabilizer $\Stab(\rho) =\bigcap_{e\subseteq \rho}\Stab(e)$ as a subgroup of $\Stab(\tilde e) = G_e$.

Since we are assuming that $G_v$ is the fundamental group of the graph $X_v$, the inclusion of $\Stab(\rho)$ in $G_v$ can be represented by the monotone map $\phi:Y_\rho \to X_v$, where $Y_\rho$ is the core of $\Stab(\rho)$ with respect to $X_e$, and the map is obtained by post-composition with $X_e\to X_v$. 
Let $\phi:Y_\rho \xrightarrow{\sigma} \overline Y_\rho\xrightarrow{\iota} X_v$ be a factorization of $\phi$ provided by Proposition~\ref{prop:factorization}. Graphs $Y_{\rho}$ and $\overline Y_{\rho}$ have natural coloring induced by their combinatorial immersions to $X_e$ and $\overline X_e$ respectively.

\begin{lem}\label{lem:preserved length of colored cycles}
Let $X(\Gamma)$ be a monochrome cycle preserving graph of groups. Then for every finite path $\rho$ in the Bass-Serre tree of the associated group $G(\Gamma)$, for each color $i$ the subgraph of $\overline Y_\rho$ consisting of edges of color $i$ is a disjoint union of cycles of length $\ell_i$.
\end{lem}

\begin{proof} 
Since $\Stab(\rho) = \bigcap_{e\in\rho} \Stab(e)$, we can obtain $\overline Y_\rho$ by a finite sequence and of fiber product of graphs $\overline Y_1\otimes_{X_v}\overline Y_2$ and moving between the factorizations of intermediate graphs $Y$ combinatorially immersing in some $X_e$ with respect to two maps to the vertex spaces $X_{\tau(e)}, X_{\tau(\overline e)}$ 
By Lemma~\ref{lem:monochrome cycles} those operations preserve the property that each subgraph of color $i$ is a disjoint union of cycles of length $\ell_i$. Thus the resulting graph $\overline Y_\rho$ has this property.
\end{proof}

As a consequence of Lemma~\ref{lem:preserved length of colored cycles}, we can view every $\overline Y_\rho$ as the $1$-skeleton of a $2$-complex $\widetriangle Y_\rho$ obtained by attaching $\ell_i$-gons of color $i$ along each monochrome cycle of color $i$.

If $\rho\subseteq \rho'$, then $\Stab(\rho')\subseteq \Stab(\rho)$ and so there is a combinatorial immersion $\overline Y_{\rho'}\to \overline Y_{\rho}$ over $X_v$.

\begin{lem}\label{lem:simply connected Y triangle}
 Suppose $\rho\subseteq \rho'$ and $\widetriangle Y_\rho$ is simply connected. Then the combinatorial immersion $\overline Y_{\rho'}\to \overline Y_{\rho}$ is  an embedding of a subgraph.
\end{lem}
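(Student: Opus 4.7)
The plan is to extend $g:\overline Y_{\rho'}\to\overline Y_\rho$ to a combinatorial map $\widetriangle g:\widetriangle Y_{\rho'}\to\widetriangle Y_\rho$ on the associated $2$-complexes, and then to deduce injectivity of $g$ from the simple connectedness of $\widetriangle Y_\rho$. By Lemma~\ref{lem:preserved length of colored cycles}, each monochrome cycle of color $i$ in $\overline Y_{\rho'}$ has length $\ell_i$; since $g$ is a color-preserving combinatorial immersion, it restricts to a homeomorphism on each such cycle, mapping it onto a monochrome cycle of color $i$ and length $\ell_i$ in $\overline Y_\rho$. Hence $g$ extends to $\widetriangle g$ by sending each attached $\ell_i$-gon in $\widetriangle Y_{\rho'}$ to the $\ell_i$-gon of $\widetriangle Y_\rho$ attached along its image. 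Local injectivity of $\widetriangle g$ at a vertex follows from $g$ being a graph immersion, together with the observation that each color contributes at most one $2$-cell corner at any vertex, since the color-$i$ subgraph is a disjoint union of embedded cycles and so each vertex has either $0$ or $2$ color-$i$ edges meeting in a single corner.

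To show $g$ is injective---which suffices, because a vertex-injective combinatorial immersion is automatically edge-injective---suppose towards a contradiction that $u_1\ne u_2$ are vertices of $\overline Y_{\rho'}$ with $g(u_1)=g(u_2)=v$, and let $\gamma$ be a reduced path from $u_1$ to $u_2$ in $\overline Y_{\rho'}$. Since $g$ is an immersion, $g(\gamma)$ is a reduced nontrivial loop in $\overline Y_\rho$ based at $v$; and since $\widetriangle Y_\rho$ is simply connected, $g(\gamma)$ bounds a disk diagram $\psi:D\to\widetriangle Y_\rho$.

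The heart of the proof is to construct a lift $\widetilde\psi:D\to\widetriangle Y_{\rho'}$ with boundary $\gamma$. First lift $\partial D$ to $\gamma$ using unique path-lifting for the combinatorial immersion $g$, and then lift the $2$-cells of $D$ one by one, peeling inward from the boundary. Whenever a $2$-cell $\sigma$ of $D$ shares an edge $e\subseteq\partial\sigma$ with the already-lifted region, the lifted edge $\widetilde e\subseteq\overline Y_{\rho'}$ has color $i$ (the color of $\partial\sigma$) and therefore lies on a unique color-$i$ cycle $\widetilde C$ of length $\ell_i$ in $\overline Y_{\rho'}$; by disjointness of embedded color-$i$ cycles in $\overline Y_\rho$, one has $g(\widetilde C)=\psi(\partial\sigma)$, so the $2$-cell bounded by $\widetilde C$ in $\widetriangle Y_{\rho'}$ provides the lift of $\sigma$.

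The main obstacle I expect is verifying consistency of this cell-by-cell procedure: different orderings of the $2$-cells, or different paths in $D$ used to reach an interior vertex, must yield the same image in $\widetriangle Y_{\rho'}$. I would handle this using simple connectedness of $D$: any two such paths are related by a sequence of $2$-cell flips, and each flip is transported across by the lift of the corresponding $2$-cell just described. Once $\widetilde\psi$ is well-defined, its restriction to $\partial D$ is a closed loop in $\overline Y_{\rho'}$ whose $g$-image is $g(\gamma)$ and which is based at $u_1$; by uniqueness of path lifts it coincides with $\gamma$. But $\gamma$ goes from $u_1$ to $u_2\ne u_1$, contradicting that it is a closed loop, and yielding the desired embedding.
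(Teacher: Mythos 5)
Your preliminary steps are correct and are the right ingredients: the extension of $g$ to $\widetriangle g\colon \widetriangle Y_{\rho'}\to\widetriangle Y_\rho$ (an immersed color-$i$ cycle of length $\ell_i$ must map isomorphically onto a color-$i$ cycle of length $\ell_i$), the local injectivity of $\widetriangle g$ via corners, the reduction of injectivity to vertex-injectivity, and the observation that a lifted edge $\widetilde e$ determines a lift of the whole polygon because each edge lies on a unique monochrome cycle. For comparison, the paper's own proof is a single sentence: it deduces $\widetriangle Y_{\rho'}\subseteq\widetriangle Y_\rho$ directly from simple connectivity of $\widetriangle Y_\rho$, implicitly via the precover/covering-space viewpoint it sets up in Section~\ref{sec: maps} (an immersion of such polygon-complexes completes to a covering, and coverings of a simply connected complex are trivial). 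So you are attempting to supply an argument the paper omits, and a disk-diagram argument is a legitimate route.

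The central step, however, has a genuine structural flaw. You begin by ``lifting $\partial D$ to $\gamma$,'' but $\partial D$ is a circle and $\gamma$ is not a closed path (its endpoints are $u_1\neq u_2$), so no map $\partial D\to \overline Y_{\rho'}$ does this: the map $\widetilde\psi$ you set out to build is exactly the object whose existence is self-contradictory under your standing assumption, and one cannot both construct it and then derive a contradiction from its existence. The correct organization is to build a lift of $\psi$ based at $u_1$ \emph{without prescribing its boundary values}, and only at the end conclude by unique path lifting that its (necessarily closed) boundary restriction equals $\gamma$, a contradiction. But the existence and well-definedness of that based lift is precisely the consistency question you defer to ``2-cell flips''---that deferral is the heart of the lemma, not a technicality. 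To close it you need: (i) every edge-connected component of $2$-cells of $D$ has an edge on $\partial D$, so peeling can start everywhere (note $D$ may have cut vertices and bridge edges even though $g(\gamma)$ is reduced); (ii) lifts propagate consistently across shared edges, which holds because in $\overline Y_{\rho'}$, $\overline Y_\rho$, and hence among the image cells, each edge lies on a \emph{unique} polygon; and (iii) the monodromy of this propagation around each interior vertex of $D$ is trivial, which follows from local injectivity of $g$ at the lifted vertex. Alternatively, the whole argument becomes cleaner as an induction on the area of $D$: take a $2$-cell $\sigma$ with a boundary edge on $\partial D$, lift its polygon as you do, replace the corresponding edge of $\gamma$ by the complementary arc of the lifted polygon's boundary, reduce, and apply induction to the resulting path, which has the same endpoints and bounds a smaller diagram; area zero forces $u_1=u_2$. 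As written, with the ill-posed boundary lift and the unproven consistency step, the proposal does not yet establish the lemma.
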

\begin{proof}
Since $\widetriangle Y_{\rho}$ is simply connected, it follows that $\widetriangle Y_{\rho'}\subseteq \widetriangle Y_{\rho}$, and so $\overline Y_{\rho'}\subseteq \overline Y_{\rho}$.
\end{proof}

\section{Finite stature in triangle Artin groups}\label{sec: Artin}
\subsection{The statement}
A \emph{triangle Artin group} is given by the presentation
$$G_{MNP} = \langle a, b, c\mid (a, b)_{M} = (b, a)_{M}, (b,c)_{N} = (c,b)_{N}, (c,a)_{P} = (a,c)_{P}\rangle,$$
where $(a,b)_M$ denote the alternating word $aba\dots$ of length $M$.

The following theorem describes a splitting of $G_{MNP}$ as an amalgamated product of free groups, where the map from the amalgamating subgroup to the vertex groups is described in terms of maps between graphs.

\begin{thm}[{\cite[Cor 4.13]{JankiewiczArtinRf}}]\label{thm:splitting}
Let $G_{MNP}$ be an Artin group where $M,N,P\geq 3$.  Then $G_{MNP} = A*_CB$ where $A\simeq F_3$, $B\simeq F_4$ and $C\simeq F_7$, and $[B:C] = 2$. The map $C\to A$ is induced by the map $\phi: X_C \to X_A$ pictured in Figure~\ref{fig:mapCtoA}, and the map $C\to B$ is induced by the quotient of the graph $X_C$ by a $\pi$ rotation.
\begin{figure}
\includegraphics[scale=0.4]{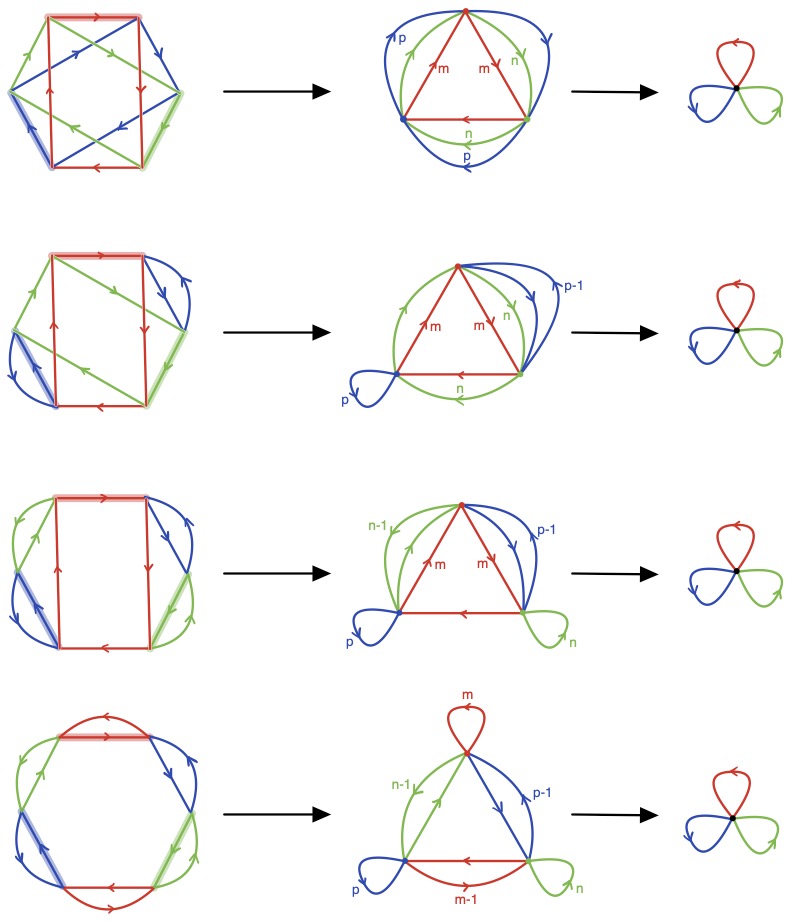}
\caption{The map $\phi:X_C\xrightarrow{\sigma}\overline{X}_C\xrightarrow{\iota} X_A$ when (1) none, (2) one, (3) two or (4) all of $M, N, P$ are even, respectively.
Specifically, $M=2m$ or $2m+1$, $N=2n$ or $2n+1$, and $P=2p$ or $2p+1$. 
We use the convention where the edge labelled by a number $k$ is a concatenation of $k$ edges of the given color. The thickened edges in $X_{C}$ are the ones that get collapsed to a vertex in $\overline X_C$}\label{fig:mapCtoA}
\end{figure}
\end{thm}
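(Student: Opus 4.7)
The plan is to realize $G_{MNP}$ as the fundamental group of a graph of spaces $X_A \cup_{X_C} X_B$ built from the data in Figure~\ref{fig:mapCtoA}, and then verify combinatorially that this fundamental group matches the Artin presentation.

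First, I would construct the three graphs explicitly. Take $X_A$ to be the wedge of three oriented loops labelled $a, b, c$, so $\pi_1(X_A) \cong F_3$. Take $X_C$ to be the graph pictured in Figure~\ref{fig:mapCtoA}; a direct edge/vertex count gives $\chi(X_C) = -6$, hence $\pi_1(X_C) \cong F_7$. The graph $X_C$ admits a free order-two automorphism $\tau$ (the $\pi$-rotation indicated in the figure), and setting $X_B := X_C / \tau$ yields $\chi(X_B) = -3$ and hence $\pi_1(X_B) \cong F_4$. The $2$-to-$1$ covering $X_C \to X_B$ then realises $C$ as a subgroup of $B$ of index $2$.

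Next, I would check that $\phi: X_C \to X_A$ as drawn in Figure~\ref{fig:mapCtoA} is monotone. By Proposition~\ref{prop:factorization} it then factors as $X_C \xrightarrow{\sigma} \overline X_C \xrightarrow{\iota} X_A$, where $\sigma$ is a composition of edge-collapses (applied to the thickened edges of $X_C$) and $\iota$ is a combinatorial immersion. In particular $\iota$ is $\pi_1$-injective, so $\phi_\ast$ embeds $C$ into $A$.

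Finally, one identifies the fundamental group of the resulting graph of spaces with $G_{MNP}$. Seifert--van Kampen (equivalently, Bass--Serre) presents the fundamental group of $X_A \cup_{X_C} X_B$ as $A *_C B$. Taking the three loops of $X_A$ as generators, each monochrome cycle of $X_C$ contributes one relation, read by comparing its $\phi$-image in $X_A$ with its image in $X_B$ transported back through the amalgamation. The cycle corresponding to the pair $\{a,b\}$ should impose exactly the Artin relation $(a,b)_M = (b,a)_M$, and similarly for the pairs $\{b,c\}$ and $\{c,a\}$. Verifying this, and ruling out extra relations, is the main obstacle: it requires a case analysis depending on how many of $M, N, P$ are even, which is precisely what the four subfigures of Figure~\ref{fig:mapCtoA} encode. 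The argument reduces to careful bookkeeping using the edge-lengths recorded on each monochrome cycle of $X_C$ together with a Tietze-transformation comparison against the Artin presentation of $G_{MNP}$.
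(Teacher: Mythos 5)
First, a structural remark: the present paper does not prove this statement at all --- it is imported verbatim from \cite{JankiewiczArtinRf} (Corollary 4.13 there), so there is no internal proof to compare against, and your proposal must stand on its own. It does not, and the failure is not in the deferred ``bookkeeping'' at the end but in the setup. You take $X_A$ to be a wedge of three loops \emph{labelled by the Artin generators} $a,b,c$, so that $A=F(a,b,c)$, and you then expect the amalgamation to impose the relations $(a,b)_M=(b,a)_M$, etc., on these loops. This is impossible. Both edge maps of your graph of spaces are $\pi_1$-injective ($X_C\to X_B$ is a double cover, and $\phi=\iota\circ\sigma$ is injective on $\pi_1$ since $\iota$ is an immersion and $\sigma$ a homotopy equivalence, as you yourself argue), so by the normal form theorem for amalgamated products the vertex group $A$ embeds into the fundamental group of the graph of spaces --- this injectivity is the backbone of Bass--Serre theory and is used throughout the present paper. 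Consequently no relation among $a,b,c$ can hold in that fundamental group unless it already holds in $F(a,b,c)$; the Artin relators $(a,b)_M(b,a)_M^{-1}$ are nontrivial words of $F(a,b,c)$, so the group you build can never be $G_{MNP}$. A second way to see the obstruction: since $[B:C]=2$ there is a surjection $A*_CB\to B/C\cong\mathbb{Z}/2$ killing $A$, so the generators $a,b,c$ of $G_{MNP}$ cannot all lie in the vertex group $A$.

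The same misconception appears in your relation count. The van Kampen presentation of $\pi_1(X_A\cup_{X_C}X_B)$ has one relation $\phi_{A*}(\gamma)=\phi_{B*}(\gamma)$ for each element $\gamma$ of a free basis of $C\cong F_7$ (seven relations), not one per monochrome cycle, and each such relation identifies an element of $A$ with an element of $B$; it can never identify two distinct elements of $A$ with one another, since (by injectivity of $\phi_{B*}$) that would force the corresponding edge-group elements to coincide, i.e.\ the relation would already hold in $A$. So ``the cycle for the pair $\{a,b\}$ imposes $(a,b)_M=(b,a)_M$'' is not a step that can be carried out. In the construction actually carried out in \cite{JankiewiczArtinRf}, the loops of $X_A$ in Figure~\ref{fig:mapCtoA} represent suitable products of Artin generators (elements that are trivial in the $\mathbb{Z}/2$-quotient above), not the generators themselves; the generators are recovered as elements crossing the edge space, involving $B\setminus C$; each dihedral relation is encoded by the fact that a monochrome cycle of $X_C$ wraps $M$ (odd case) or $M/2$ (even case) times around the corresponding loop of $X_A$ while being matched, across the amalgamation, with its image under the $\pi$-rotation; and the isomorphism with the Artin presentation is verified by explicit Tietze transformations. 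Your correct observations --- the Euler-characteristic counts giving $F_7$ and $F_4$, the free $\pi$-rotation realizing $[B:C]=2$, and the factorization of $\phi$ through $\overline X_C$ --- are consistent with that construction, but the part you postponed as ``the main obstacle'' is precisely the content of the theorem, and with your choice of $X_A$ it cannot be completed.
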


\begin{thm}[{\cite[Prop 2.8]{JankiewiczArtinSplittings}}]\label{thm:splitting2}
Let $G_{MNP}$ be an Artin group where $M,N\geq 4$  and $P=2$. 
\begin{itemize}
\item If at least one of $M,N$ is odd, then $G_{MNP} = A*_CB$ where $A\simeq F_2$, $B\simeq F_3$ and $C\simeq F_5$, and $[B:C] = 2$. The map $C\to A$ is induced by the map $\phi: X_C \to X_A$ pictured in Figure~\ref{fig:2MN}, and the map $C\to B$ is induced by the quotient of the graph $X_C$ by a $\pi$ rotation.
\begin{figure}
\includegraphics[scale=0.35]{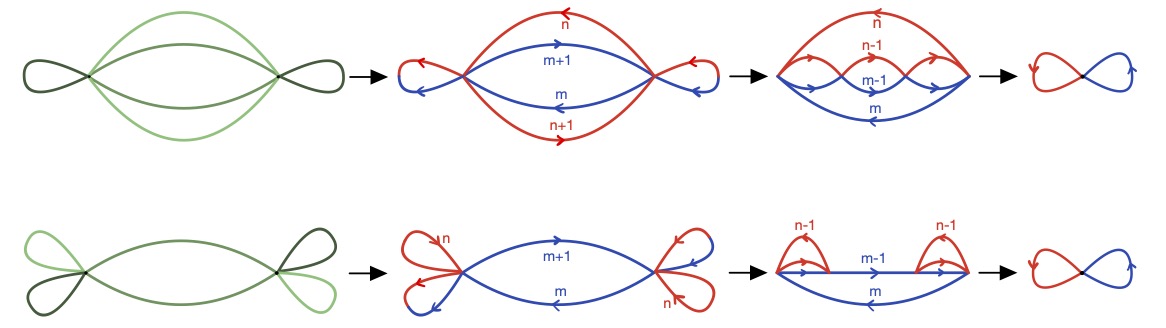}
\caption{The map $\phi:X_C\xrightarrow{id}X_C \xrightarrow{\sigma}\overline{X}_C\xrightarrow{\iota} X_A$ 
when $P=2$, $M=2m+1\geq 5$, and (top) $N=2n+1\geq 5$, (bottom) $N=2n\geq 4$, respectively. The use of colors in the leftmost graphs represents the $\pi$-rotation of $X_C$.}\label{fig:2MN}
\end{figure}
\item If both $M,N$ are even then $G_{MNP} = A*_B$ where $A\simeq F_2$, $B\simeq F_3$. The two maps $B\to A$ are induced by the maps $\phi_1, \phi_2: X_B \to X_A$ pictured in Figure~\ref{fig:2eveneven}.
\begin{figure}
\includegraphics[scale=0.35]{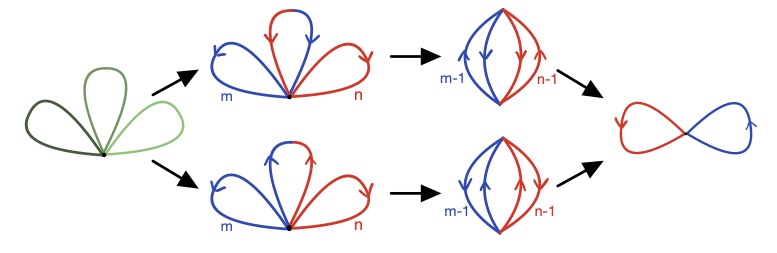}
\caption{The maps $\phi_i: X_B\xrightarrow{id}X_B \xrightarrow{\sigma}\overline{X}_B\xrightarrow{\iota} X_A$ for $i=1,2$, when $M=2m\geq 4$, $N=2n\geq 4$, and $P=2$.}\label{fig:2eveneven}
\end{figure}
\end{itemize}
\end{thm}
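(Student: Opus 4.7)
The plan is to realize $G_{MNP}$ as the fundamental group of the graph of graphs $X(\Gamma)$ specified in Figure~\ref{fig:2MN} (in the case one of $M,N$ is odd) or Figure~\ref{fig:2eveneven} (in the even-even case), and then read off the algebraic splitting. The commutation $ca=ac$ coming from $P=2$ produces a $\Z^2$-subgroup $\langle a,c\rangle\subseteq G_{MNP}$; this commutation should be encoded not inside any vertex group but rather in the way the cycles of the edge graph wrap around the two loops of $X_A$. The construction of $X(\Gamma)$ is then guided by the combinatorics of the Artin relators: the relation $(a,b)_M=(b,a)_M$ becomes a monochrome cycle in the edge graph of length depending on the parity of $M$, and likewise for $(b,c)_N=(c,b)_N$.

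After building the candidate $X(\Gamma)$ in each of the four parity subcases, I would apply the Seifert--van Kampen theorem to the associated graph of spaces to obtain a presentation of $\pi_1(X(\Gamma))$, and then check that it is Tietze-equivalent to the standard Artin presentation of $G_{MNP}$. Once this identification is in hand, Bass--Serre theory delivers the algebraic splitting as $A*_CB$ or $A*_B$, and the ranks of the vertex and edge groups can be read off as $1-\chi$ of $X_A,X_B,X_C$, yielding the claimed $F_2$, $F_3$, $F_5$. The index $[B:C]=2$ in the amalgam case follows from the fact that $X_C\to X_B$ is a $2$-fold combinatorial covering, induced by a $\pi$-rotation symmetry of $X_C$ visible in the leftmost picture; the existence of this symmetry is exactly what requires at least one of $M,N$ to be odd, which is why the even-even case degenerates to an HNN extension with no separate edge group of rank $5$.

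The main obstacle is the verification step: matching the presentation of $\pi_1(X(\Gamma))$ to the Artin presentation exactly, with no missing or extraneous relations. For each edge $e$ one must trace each length-$\ell_i$ monochrome cycle of $X_C$ (respectively $X_B$) through the factorization $\phi_e=\iota\circ\sigma$ provided by Proposition~\ref{prop:factorization}, and check that its image in $X_{\tau(e)}$ reads off the correct Artin relator. The fold/collapse step $\sigma$ is where the parity dependence of the pictures enters, since an odd-exponent Artin relator forces the cycle in $X_C$ to wrap twice before closing up in $X_A$ while an even-exponent relator wraps only once. This parity bookkeeping is what produces the four separate pictures in Figures~\ref{fig:2MN} and \ref{fig:2eveneven}, and it is essentially mechanical once the pattern has been worked out in one subcase.
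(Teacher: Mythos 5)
First, a point of order: the present paper does not prove this statement at all --- it is imported as a quotation of Prop.~2.8 of the cited splittings paper, so there is no in-paper proof to compare against. Your outline does follow the same general strategy as that cited construction (realize $G_{MNP}$ as the fundamental group of an explicit graph of graphs and identify $\pi_1$ with the Artin presentation), but as written it is a plan rather than a proof, and the plan defers exactly the part that constitutes the theorem. The content of the statement is the specific graphs and maps of Figures~\ref{fig:2MN} and~\ref{fig:2eveneven}; saying you would ``build the candidate $X(\Gamma)$ guided by the combinatorics of the relators'' and then check Tietze equivalence ``mechanically'' is deferring the entire construction and verification, so no subcase is actually established. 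You also omit two verifications that the conclusion, as stated, genuinely needs: that the edge maps are $\pi_1$-injective (Seifert--van Kampen only gives a pushout presentation; to call the result $A*_CB$ or $A*_B$ in the Bass--Serre sense the edge group must embed in the vertex groups --- here this would follow from factoring $\phi$ as in Proposition~\ref{prop:factorization} into homotopy equivalences followed by a combinatorial immersion, but it must be said), and that the $\pi$-rotation acts \emph{freely} on $X_C$, so that $X_C\to X_B$ is an honest $2$-fold cover, which is what makes $[B:C]=2$ and the rank count $\chi(X_C)=2\chi(X_B)$ consistent.

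Second, one of your structural claims is wrong, or at least unsupported in a way that matters. You assert that the existence of the $\pi$-rotation symmetry ``is exactly what requires at least one of $M,N$ to be odd,'' and that this is why the even--even case degenerates to an HNN extension. Compare with Theorem~\ref{thm:splitting}: for $M,N,P\geq 3$ the group splits as an amalgam $A*_CB$ with $[B:C]=2$ induced by a $\pi$-rotation of $X_C$ in \emph{all} parity cases, including when $M,N,P$ are all even. So parity of the exponents by itself neither creates nor destroys the rotation symmetry; the amalgam/HNN dichotomy in the $P=2$ case comes from the particular construction used there, not from the general principle you invoke. As it stands, your reasoning would predict the wrong splitting type in the all-even cases of Theorem~\ref{thm:splitting}, which signals that this step of your argument cannot be the actual mechanism.
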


Here is a precise statement of the main theorem of this paper (Theorem~\ref{thm: main}).
\begin{thm}
Let $G_{MNP}$ be a triangle Artin group where $M\leq N\leq P$ and either $M>2$, or $N>3$. Then $G_{MNP}$ has finite stature with respect to $\{A\}$, where $A$ is as described in Theorem~\ref{thm:splitting} or Theorem~\ref{thm:splitting2} respectively. All finitely generated subgroups of $A$ are separable in $G_{MNP}$, and in particular $G_{MNP}$ is residually finite.
\end{thm}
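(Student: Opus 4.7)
The plan is to invoke Proposition~\ref{prop: conjugacy classes of path stabilizers} and reduce finite stature to showing that there are only finitely many $G$-conjugacy classes of subgroups of the form $A\cap \Stab(\rho)$, as $\rho$ ranges over finite paths in the Bass-Serre tree $T$ through an $A$-vertex. By Lemma~\ref{lem:preserved length of colored cycles}, each such subgroup is represented by a finite precover $\overline Y_\rho \to X_A$ which is a disjoint union of monochrome cycles whose cycles of color $i$ have a prescribed length $\ell_i$. Attaching $\ell_i$-gons along these cycles yields the $2$-complex $\widetriangle Y_\rho$ introduced before Lemma~\ref{lem:simply connected Y triangle}.

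First I would check, by inspecting the explicit factorizations of $\phi_e$ described in Theorem~\ref{thm:splitting} and Theorem~\ref{thm:splitting2}, that these splittings are monochrome cycles preserving and compute $\ell_1,\ell_2,\ell_3$ in terms of $M,N,P$ and their parities. This reduces to a bounded case analysis over the parities in Theorem~\ref{thm:splitting} and over the amalgamated-product and HNN sub-cases of Theorem~\ref{thm:splitting2}, but the computations are mechanical: the attaching maps are explicitly presented as sequences of subdivisions, folds, and collapses.

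The heart of the proof is to show that $\widetriangle Y_\rho$ is simply connected for every finite path $\rho$. My approach is to construct a reference $2$-complex $Z$ by coning off every monochrome cycle of color $i$ in $X_A$ by an $\ell_i$-gon, and then embed $\widetriangle Y_\rho$ combinatorially as a subcomplex of the universal cover $\widetilde Z$. The inductive description of path stabilizers in Lemma~\ref{lem:stabilizers as intersections}, namely $K_\ell = K_{\ell-1}\cap (K_{\ell-1}')^{a\text{ or }b}$, together with the fiber-product description of intersections in Lemma~\ref{lem:fiber product}, lets me build $\widetriangle Y_\rho$ inductively by gluing two previously constructed simply connected $2$-complexes along a common sub-complex inside $\widetilde Z$. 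The simple connectedness of $\widetilde Z$ itself will follow from a combinatorial non-positive curvature condition on $(\ell_1,\ell_2,\ell_3)$ (of Gauss-Bonnet type) that holds under the hypothesis $M>2$ or $N>3$ but fails in the excluded cases $(2,2,P)$ and $(2,3,P)$.

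Once simple connectedness is established, Lemma~\ref{lem:simply connected Y triangle} yields $\overline Y_{\rho'}\hookrightarrow \overline Y_\rho$ for every extension $\rho\subseteq \rho'$. Combining this with the cocompactness of the $G$-action on $T$---which gives only finitely many $G$-orbits of bounded-length paths through the base $A$-vertex, and hence finitely many $\overline Y_\rho$ up to isomorphism in that bounded-length range---every longer stabilizer is represented by a subgraph of one of finitely many finite graphs. Since each such graph has only finitely many subgraphs, only finitely many isomorphism classes, and equivalently $G$-conjugacy classes, arise. Finite stature with respect to $\{A\}$ follows, and the separability of finitely generated subgroups of $A$ together with the residual finiteness of $G_{MNP}$ then follow from the Huang--Wise theorem cited at the start of Section~\ref{sec: finite stature}. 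The main obstacle throughout is the simple connectedness of $\widetriangle Y_\rho$, which is the crucial place where the hypothesis on $(M,N,P)$ enters.
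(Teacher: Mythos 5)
Your central step---that $\widetriangle Y_\rho$ is simply connected for \emph{every} finite path $\rho$---is false for many of the triples covered by the theorem, and the paper's own computations give explicit counterexamples. For $(M,N,P)=(2m+1,4,4)$, Lemma~\ref{lem:44(2p+1)} exhibits a graph $\overline Y_2$ (the left graph of Figure~\ref{fig:44(2p+1)}) whose complex $\widetriangle Y_2$ is \emph{not} simply connected, and a non-simply-connected $\widetriangle Y_3$ as well; the same happens for all-odd triples (Lemma~\ref{lem:three odd}, Lemma~\ref{lem:fiberprod}, Lemma~\ref{lem:33(2p+1)}). In the extreme case $(3,3,3)$, $C$ is normal in both $A$ and $B$, so every path stabilizer is all of $C$ (Proposition~\ref{lem:333}), and no complex built on its core ever becomes simply connected. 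So no curvature or convexity argument on a coned-off reference complex can yield your claim: the claim itself fails within the hypothesis $M>2$ or $N>3$, not just in the excluded cases $(2,2,P)$ and $(2,3,P)$. Simple connectedness of every $\widetriangle Y_2$ holds---and is used, via Lemma~\ref{lem: finite stature of twisted doubles}---only in the case where at least one label is even and $(M,N,P)\neq(2m+1,4,4)$ (Proposition~\ref{prop: at least one even and not 2m+1,4,4}). For the remaining cases the paper uses the strictly weaker stabilization criterion of Lemma~\ref{lem: finitely many colored graphs}: it suffices that for some $k$ every map $\overline Y_{k+2}\to \overline Y_k$ is an embedding of a subgraph. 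This is verified by explicit finite fiber-product computations (the finitely many non-simply-connected graphs that occur reproduce themselves or shrink under further fiber products), with $k=2$ for $(2m+1,4,4)$ and the odd cases with labels $\geq 5$, $k=3$ for $(3,3,2p+1)$, a normality argument for $(3,3,3)$, and separate arguments for $P=2$, including the HNN case $A*_B$ (Lemma~\ref{lem:eveneven2 proof}) which your amalgam-only induction does not address.

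A second, independent flaw is your counting step: cocompactness of the $G$-action on $T$ does \emph{not} give finitely many $G$-orbits of bounded-length paths through an $A$-vertex. Since $C$ has infinite index in $A$, the $A$-vertices of $T$ have infinite valence, and already the length-$2$ paths through a fixed $A$-vertex fall into infinitely many $G$-orbits (they correspond essentially to double cosets $C\backslash A/C$, which are infinite in number). The finiteness of conjugacy classes of stabilizers therefore cannot come from counting path orbits; it comes from Lemma~\ref{lem:fiber product} and Lemma~\ref{lem: finitely many conjugacy classes of intersections}: all stabilizers of paths of a given length are represented, up to conjugacy, by connected components of a single finite iterated fiber product (this is how Lemma~\ref{lem:computing stabilizers} and Lemma~\ref{lem: finitely many colored graphs} are organized). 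Your reduction via Proposition~\ref{prop: conjugacy classes of path stabilizers} and the use of the monochrome-cycle structure match the paper, and the final appeal to Huang--Wise for separability is correct, but the two gaps above sit exactly at the heart of the argument, where the hypothesis on $(M,N,P)$ must actually be used case by case.
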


In subsections~\ref{sec:at least one even}, \ref{sec:two 4}, \ref{sec:all odd} and \ref{sec:2} we will prove groups $G_{MNP}$ as above have finite stature with respect to $\{A\}$ by analyzing various cases (see Proposition~\ref{prop: at least one even and not 2m+1,4,4}, Proposition~\ref{cor:442m+1}, Proposition~\ref{cor: all odd}, and Proposition~\ref{cor: 2MN}). The separability of finitely generated subgroups of $A$ will then follow from Corollary~\ref{cor: graphs of free groups}.

\subsection{Some facts about the splittings of Artin groups}
We start with some facts that will be used in the next sections.
We first focus on the cases where $G_{MNP}$ splits as $A*_CB$.
Let $\beta: X_{C}\to X_C$ be the $\pi$-rotation, as in Theorem~\ref{thm:splitting} or Theorem~\ref{thm:splitting2} respectively. 
A choice of a path between $x\in X_C$ and $\beta(x)\in X_C$ determines an element $b\in B-C$, such that the induced homomorphism $C \to C$ is the conjugation by $b$. 
We emphasize that $\beta^2 $ is the identity map.
Figure~\ref{fig:mapCtoA} and Figure~\ref{fig:2MN} illustrate the factorization $\phi = \iota \circ \sigma$ from Proposition~\ref{prop:factorization}.
We denote $\sigma(X_C) = \overline X_C$. 

We will also extend the definition of $\sigma$ to any combinatorial immersion of $X_C$ (and abuse the notation) in the following way. Given a combinatorial immersion $Y\to X_C$, let $\overline Y\to \overline X_C$ be a combinatorial immersion, and let $\sigma:Y\to \overline Y$ be a composition of edge-subdivisions, Stallings' folds, and edge-collapses, which locally coincides with $\sigma: X_C\to \overline X_C$. In particular, the following diagram commutes.
\[\begin{tikzcd}
Y \arrow{r}{\sigma} \arrow{d}{} & \overline Y \arrow{d}{} \\
X_C\arrow{r}{\sigma}& \overline X_C.
\end{tikzcd}
\]

We note the following. 
\begin{lem}\label{lem:1-to-1 correspondence between cores}
The map $\sigma: Y\to \overline Y$ is a homotopy equivalence for every combinatorial immersion $Y\to X_C$.

For each subgroup $H\subseteq C$ there is a one-to-one correspondence between the core $Y\to X_C$ of $H$ with respect to $X_C$ and the core $\overline Y\to \overline X_C$ of $H$ with respect to $\overline X_C$, where $\overline Y = \sigma(Y)$ as above.
\end{lem}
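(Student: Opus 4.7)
The plan is to handle the two claims in sequence, exploiting the structure of $\sigma : X_C \to \overline X_C$ evident in Figures~\ref{fig:mapCtoA} and \ref{fig:2MN}: after some edge-subdivisions, $\sigma$ collapses the thickened edges, and the essential geometric fact I will use is that the thickened subgraph is a forest in $X_C$.

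For the first claim, edge-subdivisions are always homotopy equivalences, so the only nontrivial step is the collapse. Let $T \subseteq X_C$ denote the union of thickened edges. By direct inspection of each case in Figures~\ref{fig:mapCtoA} and \ref{fig:2MN}, $T$ is a forest, and hence the quotient $X_C \to X_C / T = \overline X_C$ is a homotopy equivalence. Given an arbitrary precover $p : Y \to X_C$, the commuting square in the statement tells us $\sigma : Y \to \overline Y$ performs the analogous moves upstairs: subdivisions on preimages of subdivided edges, followed by the collapse of $p^{-1}(T)$. Since $p$ is a combinatorial immersion, each connected component of $p^{-1}(T)$ immerses into a component of $T$; because immersions are $\pi_1$-injective and the components of $T$ are trees, each such component of $p^{-1}(T)$ is itself a tree. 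Therefore $p^{-1}(T)$ is a forest, its collapse is a homotopy equivalence, and so is $\sigma : Y \to \overline Y$.

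For the second claim, it suffices to show that if $(Y, \hat x) \to (X_C, x)$ is the core of $H$ with respect to $X_C$, then $(\overline Y, \sigma(\hat x)) \to (\overline X_C, \sigma(x))$ is the core of $H$ with respect to $\overline X_C$; uniqueness of cores then produces the one-to-one correspondence $Y \leftrightarrow \overline Y$ indexed by $H$. Since $\sigma$ is a homotopy equivalence (by the first claim, at both the level of $X_C$ and of $Y$), it induces an isomorphism of fundamental groups through which $H$ is identified as a subgroup of $\pi_1(\overline X_C)$, and $\overline Y$ represents $H$ under this identification. For minimality, suppose a proper subgraph $\overline Y'' \subsetneq \overline Y$ were a precover of $\overline X_C$ also representing $H$. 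Its preimage $Y''$ in $Y$ (obtained by removing the edges of $Y$ whose $\sigma$-image lies in $\overline Y \setminus \overline Y''$) is a proper subgraph of $Y$, is itself a precover of $X_C$, and by applying the first claim once more the induced map $Y'' \to \overline Y''$ is a homotopy equivalence, so $Y''$ represents $H$ as well. This contradicts the minimality of $Y$, forcing $\overline Y$ to be the core.

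The main obstacle is the case-by-case verification that the thickened edges in $X_C$ form a forest, which must be carried out across the four parity combinations in Figure~\ref{fig:mapCtoA} and the two configurations in Figure~\ref{fig:2MN}; once this is established, the transfer to arbitrary precovers is automatic from the local injectivity of immersions, and the statement about cores reduces to the standard fact that homotopy equivalences between graphs preserve the minimality property defining the core.
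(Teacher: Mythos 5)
Your proof is correct and takes essentially the same route as the paper: the paper likewise reduces the first claim to the inspection of Figures~\ref{fig:mapCtoA} and~\ref{fig:2MN} showing that $\sigma$ consists of edge-subdivisions and collapses of non-loop edges (your "the collapsed edges form a forest" is the same observation, packaged globally), and then transfers this to precovers and deduces the core correspondence. The only difference is one of detail, not of method: where the paper says "similarly" for precovers and "by construction" for the correspondence of cores, you supply explicit justifications (preimages of trees under immersions are trees by $\pi_1$-injectivity, and the pullback-of-a-proper-subgraph minimality argument), which fills in rather than replaces the paper's argument.
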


\begin{proof} The map $\sigma: X_C\to \overline X_C$ is obtained as a sequence of edge-subdivisions and edge-collapses of the edges. By analyzing each of the cases in Figure~\ref{fig:mapCtoA} and Figure~\ref{fig:2MN}, we note that we never collapse a loop. Thus, by discussion in Section~\ref{sec: maps}, $\sigma: X_C\to \overline X_C$ is a homotopy equivalence. Similarly, any induced map $Y\to \overline Y$ is also obtained as a sequence of edge-subdivisions and edge-collapses of the edges that are not loops, and hence $\sigma: Y\to \overline Y$ is a homotopy equivalence. By construction $Y\to X_C$ is the core of some subgroup $H\subseteq \pi_(X_C)$ with respect to $X_C$ if and only if $\overline Y\to \overline X_C$ is the core of $H$ with respect to $\overline X_C$.
\end{proof}

We will use the notation $\sigma^{-1}(\overline Y)$ to denote $Y$ such $\overline Y = \sigma(Y)$. 

\begin{lem}\label{lem: conjugate core}
Let $H\subseteq C$ be a subgroup, and let $Y\to X_C$ be its core with respect to $X_C$. Then $Y\to X_C\xrightarrow{\beta} X_C$ is the core of $H^b\subseteq C$. Moreover, 
\end{lem}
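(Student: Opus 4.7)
The plan is to exploit that $\beta \colon X_C \to X_C$ is a combinatorial \emph{isomorphism} of metric graphs --- the $\pi$-rotation of the finite graph $X_C$ --- not merely a monotone or immersive map. First, since post-composing a combinatorial immersion with a combinatorial automorphism yields another combinatorial immersion, the composite $Y \to X_C \xrightarrow{\beta} X_C$ is automatically a combinatorial immersion. The substantive content of the lemma is therefore (i) identifying which subgroup of $C = \pi_1(X_C, x)$ the composite map represents, and (ii) checking that it is in fact the \emph{core} (i.e.\ minimal).

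For (i), I would unpack the paragraph immediately preceding the lemma, which fixes $b \in B - C$ via a chosen path $\gamma$ from the basepoint $x$ to $\beta(x)$ and stipulates that the induced automorphism of $C$ is conjugation by $b$. If the core of $H$ is based at $\hat x \mapsto x$ with $\pi_1(Y, \hat x) = H$, then the composition with $\beta$ has $\hat x \mapsto \beta(x)$ and represents $\beta_\ast(H) \subseteq \pi_1(X_C, \beta(x))$. Changing basepoint back to $x$ along $\gamma$ identifies $\beta_\ast$ with conjugation by $b$, so the subgroup of $\pi_1(X_C, x) = C$ represented is $bHb^{-1} = H^b$.

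For (ii), since $\beta$ is a bijective combinatorial automorphism of $X_C$, it lifts to an automorphism of the connected cover of $X_C$ corresponding to $H^b$, and carries the minimal subgraph $Y$ representing $H$ bijectively onto the minimal subgraph representing $H^b$. Any strict subgraph of the image would pull back along $\beta^{-1}$ to a strict subgraph of $Y$ with the same fundamental group, contradicting the fact that $Y$ is the core of $H$ with respect to $X_C$. Hence the composite is the core of $H^b$, as claimed.

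I do not expect any real obstacle in this argument; the only subtlety is basepoint bookkeeping, and in particular checking that the convention $H^g = gHg^{-1}$ introduced earlier, combined with the direction in which $\gamma$ is traversed to change basepoints, indeed delivers $H^b$ rather than $H^{b^{-1}}$.
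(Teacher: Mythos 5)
Your proposal is correct and follows essentially the same route as the paper's (very terse) proof, which simply observes that $Y\to X_C$ induces the inclusion $H\to C$ while $\beta$ induces conjugation by $b$ as fixed in the paragraph preceding the lemma. Your additional verifications --- that the composite is still an immersion, the basepoint change along $\gamma$, and that $\beta$ carries the minimal subgraph to the minimal subgraph --- are exactly the details the paper leaves implicit.
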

\begin{proof}
Indeed, $Y\to X_C$ induces the inclusion $H\to C$ and $X_C\xrightarrow{\beta} X_C$ induces the conjugation by $b$.
\end{proof}

When we consider the composition of the map $Y\to X_C\xrightarrow{\beta} X_C$ with $\sigma:X_C\to \overline X_C$ we will again abuse the notation and write $\sigma\beta(Y)$ to represent the map $Y\to \overline X_C$.

The following lemma will allow us to apply Proposition~\ref{prop: conjugacy classes of path stabilizers}.

\begin{lem}\label{lem:computing stabilizers}
Let $T$ be the Bass-Serre tree of the splitting $G_{MNP} = A*_CB$, and $\rho$ be a finite path in $T$ of length $2\ell$ between a pair of  $A$-vertices and such that $\Stab(\rho(2k)) = A$ where $\ell = 2k$ or $2k+1$.
The $A$-conjugacy class of the stabilizer $\Stab(\rho)$ is represented by a combinatorial immersion $\overline Y_{\ell}\to \overline X_C\to X_A$, where the corresponding $Y_\ell$ is defined recursively:
\begin{itemize}
\item $Y_1 = X_C$,
\item $Y_{\ell}$ is a connected component of $\sigma^{-1} (\overline{Y}_{\ell-1}\otimes_{X_A} \overline{Y}_{\ell-1})$ for even $\ell$, 
\item $Y_{\ell}$ is a connected component of $\sigma^{-1} (\overline Y_{\ell-1}\otimes_{X_A} \sigma\cdot \beta(Y_{\ell-1}))$ for odd $\ell$
\end{itemize}
The map $\overline Y_{\ell-1}\to X_A$ in the recursive definition above is obtained by composing the map $\overline Y_{\ell-1}\to \overline X_C$ with the map $\overline X_C\to X_A$.
\end{lem}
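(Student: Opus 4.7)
I would prove the lemma by induction on $\ell$. The base case $\ell=1$ is direct: a path of length $2$ between two $A$-vertices consists of two edges meeting at a $B$-vertex, and by Lemma~\ref{lem:stabilizers of adjacent edges} they share a common stabilizer, which after conjugation equals $C$. Since $C$ is represented by the tautological precover $X_C \to X_C$, we obtain $Y_1 = X_C$ as claimed.

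For the inductive step, Lemma~\ref{lem:stabilizers as intersections} gives $K_\ell = K_{\ell-1} \cap (K'_{\ell-1})^g$, where $K_{\ell-1}, K'_{\ell-1}$ are stabilizers of length-$2(\ell-1)$ paths, and $g \in A$ for even $\ell$, $g \in B - C$ for odd $\ell$. By induction these are represented by precovers $Y_{\ell-1}, Y'_{\ell-1} \to X_C$ of the recursive type (the lemma statement abuses notation by collapsing both into a single symbol), whose images $\overline Y_{\ell-1}, \overline Y'_{\ell-1} \to \overline X_C$ under $\sigma$ represent the same subgroups of $C = \pi_1(\overline X_C)$ by Lemma~\ref{lem:1-to-1 correspondence between cores}.

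For even $\ell$, both factors are subgroups of $A$ via the immersion $\iota\colon \overline X_C \to X_A$, with conjugation by $a \in A$ merely shifting basepoints, so Stallings' Lemma~\ref{lem:fiber product} identifies $K_\ell$ with the fundamental group of $\overline Y_{\ell-1} \otimes_{X_A} \overline Y'_{\ell-1}$. This fiber product inherits a natural combinatorial immersion to $\overline X_C$ from its two factors, and applying $\sigma^{-1}$---well-defined because $\sigma$ is a homotopy equivalence on every precover---lifts it to the desired $Y_\ell \to X_C$. For odd $\ell$, Lemma~\ref{lem: conjugate core} realizes the $b$-conjugate via $\beta$, so $(K'_{\ell-1})^b$ is represented by $\beta(Y'_{\ell-1}) \to X_C$, whose image under $\sigma$ is $\sigma \cdot \beta(Y'_{\ell-1}) \to \overline X_C$; the same fiber-product-then-$\sigma^{-1}$ construction produces $Y_\ell$.

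The main technical point I expect to handle carefully is that the fiber product must be taken over $X_A$ and not over $\overline X_C$: the algebraic intersection occurs in $A$ because the conjugator $g$ lies in $A$ in the even case, and in the odd case $\beta$ converts the $b$-conjugation to an identity-level rearrangement of $C$ inside the ambient $A$. One has to check that the compositions $\overline Y^{(\bullet)}_{\ell-1} \to \overline X_C \xrightarrow{\iota} X_A$ are combinatorial immersions (immediate from the definition of $\iota$) and that the resulting fiber product naturally factors through $\overline X_C$ so that $\sigma^{-1}$ is defined, which follows because both factors come equipped with compatible maps to $\overline X_C$.
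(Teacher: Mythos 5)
Your proposal is correct and follows essentially the same route as the paper's proof: it invokes Lemma~\ref{lem:stabilizers as intersections} for the recursion $K_\ell = K_{\ell-1}\cap (K'_{\ell-1})^g$, Lemma~\ref{lem:fiber product} for the fiber product over $X_A$, Lemma~\ref{lem: conjugate core} to realize the $b$-conjugate via $\beta$ in the odd case, and Lemma~\ref{lem:1-to-1 correspondence between cores} to pull back along $\sigma^{-1}$, exactly as the paper does. The extra care you take about why the fiber product is formed over $X_A$ and why it factors through $\overline X_C$ is a sound elaboration of points the paper leaves implicit, not a departure from its argument.
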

\begin{proof}
Let $\rho$ be a path of length $2\ell$. By Lemma~\ref{lem:stabilizers as intersections}, $\Stab(\rho)$ is equal to a group $K_{\ell}$ defined recursively as 
\begin{itemize}
\item $K_1 = C$,
\item  if $\ell$ is even, then $K_\ell = K_{\ell-1}\cap (K'_{\ell-1})^{a_1}$.
\item  if $\ell$ is odd, then $K_\ell = K_{\ell-1}\cap (K'_{\ell-1})^b$,
\end{itemize}
Clearly, $Y_1 = X_C\to X_C$ is the core of $K_1$ $= C$ with respect to $X_C$. 
For even $\ell$, $K_\ell = K_{\ell-1}\cap (K'_{\ell-1})^{a}$, so by Lemma~\ref{lem:fiber product} the core of $K_{\ell}$ with respect to $\overline X_C$ is $\overline Y_{\ell-1}\otimes_{X_A} \overline Y_{\ell-1}$, and by Lemma~\ref{lem:1-to-1 correspondence between cores} the core of $K_{\ell}$ with respect to $X_C$ is $\sigma^{-1}( \overline Y_{\ell-1}\otimes_{X_A} \overline Y_{\ell-1})$.
For odd $\ell$, $K_\ell = K_{\ell-1}\cap (K'_{\ell-1})^b$, so by Lemma~\ref{lem:fiber product} and Lemma~\ref{lem: conjugate core} the core $K_{\ell}$ with respect to $\overline X_C$ is $\overline Y_{\ell-1}\otimes_{X_A} \sigma\cdot \beta (Y_{\ell-1})$, and by Lemma~\ref{lem:1-to-1 correspondence between cores} the core of $K_{\ell}$ with respect to $X_C$ is $\sigma^{-1} (\overline Y_{\ell-1}\otimes_{X_A} \sigma\cdot \beta (Y_{\ell-1}))$.
\end{proof}

We emphasize that a group $K_{\ell}$ is not uniquely determined, and similarly a graph $Y_{\ell}$ is not uniquely determined, as at each step in this recursive construction there might be multiple connected components to choose from.
Since a sequence of group $C = K_1\supseteq K_2\supseteq K_3 \supseteq \dots$ form a descending chain, we have a corresponding sequence of combinatorial immersions $\dots \xrightarrow{} Y_3 \to Y_2\to Y_1  = X_C$. 

\begin{lem}\label{lem: arbitrary paths}
Let $\rho$ be an arbitrary finite path in $T$ of length $2\ell$ between the pair of $A$-vertices containing a vertex whose stabilizer is $A$. Then the $A$-conjugacy class of $\Stab(\rho)$ is represented by a $\overline Y_\ell$ or $\sigma\beta(Y_{\ell})$.
\end{lem}
\begin{proof}  Let $\ell = 2k$ or $2k+1$. The proof is by induction on the half of the distance $d$ from the vertex of stabilized by $A$ to the ``middle'' $A$-vertex $\rho(2k)$ of $\rho$ (since any two $A$ vertices are at even distance, $d$ measures the number of steps between ``consecutive'' $A$-vertices). When $d = 0$, i.e.\ $\rho(2k)$ is stabilized by $A$, then we are in the setting of Lemma~\ref{lem:computing stabilizers} and the $A$-conjugacy class of $\Stab(\rho)\subseteq A$ is represented by $\overline Y_{\ell}$. Suppose that for every length $2\ell$ path $\rho'$ between $A$-vertices where the distance from $\rho'(2k)$ to the vertex stabilized by $A$ equals $d-1$, the $A$-conjugacy class of $\Stab(\rho')$ is represented by $\overline Y_{\ell}$ or $\sigma\beta(Y_{\ell})$.

Now let $\rho$ be a length $2\ell$ path between $A$-vertices where the distance from $\rho(2k)$ to the vertex stabilized by $A$ equals $d$. Then there exist $b \in B-C$ and $a\in A$ (possibly $a=1$) such that $\rho = a^{-1}b^{-1}\rho'$, where $\rho'$ is another length $2\ell$ path with the distance from $\rho'(2k)$ to the vertex stabilized by $A$ equal $d-1$. By the inductive assumption the $A$-conjugacy class of $\Stab(\rho')$ is represented by $\overline Y_\ell$ or $\sigma\beta(Y_{\ell})$. The $A$-conjgacy class of $\Stab(\rho) = a^{-1}b^{-1}\Stab(\rho')ba$ is represented by $\sigma\beta (Y_\ell)$ or $\sigma\beta\sigma^{-1}\sigma\beta(Y_{\ell})  = \sigma \beta^2(Y_{\ell}) = \overline Y_{\ell}$. The last equality holds since $\beta^2$ is the identity.
\end{proof}

\subsection{Representing combinatorial maps as colored graphs}
By orienting and coloring all the edges of $X_A$ with distinct colors, we can represent the combinatorial immersion $\overline Y_{\rho} \xrightarrow{\iota} X_A$ as the graph $\overline Y_{\rho}$ whose edges are oriented and colored by the colors of the edges of $X_v$. From now on, we will mostly view $\overline Y_\rho$ as graphs together with orientation and coloring of edges, which means that such a graph encodes a map $\overline Y_\rho \to X_A$.

\begin{lem}\label{lem: finitely many colored graphs}
Suppose that there are only finitely many orientation and color preserving
isomorphism types of graphs $Y_{\ell}$ for any $\ell\geq 1$. Then $G_{MNP}$ has finite stature with respect to $\{A\}$.

In particular if there exists $k\geq 1$ such that every map $\overline Y_{k+2}\to \overline Y_k$ is an embedding of a subgraph, then $G_{MNP}$ has finite stature with respect to $\{A\}$.
\end{lem}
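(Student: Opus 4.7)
The plan is to read the hypothesis as directly controlling stabilizers of paths in the Bass--Serre tree through the combinatorial dictionary provided by Lemma~\ref{lem:computing stabilizers}. By Proposition~\ref{prop: conjugacy classes of path stabilizers}, finite stature with respect to $\{A\}$ is equivalent to finitely many $G$-conjugacy classes of subgroups of the form $A\cap\Stab(\rho)$, where $\rho$ is a finite path in the Bass--Serre tree $T$ of $G_{MNP}=A*_CB$ passing through an $A$-vertex. Using Lemma~\ref{lem:stabilizers of adjacent edges}, we may extend $\rho$ past every $B$-vertex at its ends without changing its stabilizer, so it suffices to consider paths of even length $2\ell$ between two $A$-vertices.

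For the first assertion, Lemma~\ref{lem:computing stabilizers} identifies $\Stab(\rho)$, up to $G$-conjugacy, with the subgroup of $C$ represented by the precover $Y_\ell\to X_C$. Since $X_C$ is a fixed finite graph, for each abstract graph $Y_\ell$ there are only finitely many precover structures $Y_\ell\to X_C$; hence finitely many isomorphism types of the graphs $Y_\ell$ (taken across all $\ell\geq 1$) translate into only finitely many conjugacy classes of subgroups of $C$, hence of $G$. This is precisely the criterion supplied by Proposition~\ref{prop: conjugacy classes of path stabilizers}.

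For the ``in particular'' clause, the assumption that every map $\overline Y_{k+2}\to\overline Y_k$ is a subgraph embedding must be propagated through the recursion. Granting this, $\overline Y_{k+2j}$ embeds as a subgraph of $\overline Y_k$ and $\overline Y_{k+1+2j}$ as a subgraph of $\overline Y_{k+1}$ for every $j\geq 0$, so both families are contained in fixed finite graphs and therefore exhibit only finitely many isomorphism types. Only finitely many levels $\ell<k$ remain, and at each such level Lemma~\ref{lem: finitely many conjugacy classes of intersections} (applied to the fiber-product construction in Lemma~\ref{lem:computing stabilizers}) provides only finitely many possibilities; combining these with Lemma~\ref{lem:1-to-1 correspondence between cores} to pass between $Y_\ell$ and $\overline Y_\ell$ yields a uniform finite bound on isomorphism types, reducing to the first part.

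The main obstacle will be making the propagation of the embedding hypothesis precise: once $\overline Y_{k+2}\subseteq \overline Y_k$, one must argue that fiber-producting $\overline Y_{k+2}$ with an appropriate conjugate again lands inside the corresponding fiber product built from $\overline Y_k$. Formally, a fiber product of subgraphs over a common ambient graph is a subgraph of the fiber product of the larger graphs, but this must be reconciled with the folding/subdivision/collapse operations that interpolate between $Y_\ell$ and $\overline Y_\ell$, so that passing from level $k+2j$ to level $k+2(j+1)$ mirrors the step from $k$ to $k+2$ inside a shrinking finite ambient complex.
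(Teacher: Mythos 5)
Your proposal is correct and follows essentially the same route as the paper's proof: the first part is Proposition~\ref{prop: conjugacy classes of path stabilizers} together with Lemma~\ref{lem:computing stabilizers} and the observation that finitely many finite graphs admit only finitely many immersions to the fixed finite graph $X_C$, while the second part reduces to the first by propagating the embedding hypothesis up the recursion, exactly as in the paper. The ``main obstacle'' you flag is real but is precisely the step the paper itself dispatches in one sentence (``since $Y_{k+2}$ is obtained from $Y_k$ in two steps\dots we deduce that $Y_{k+2(i+1)}\to Y_{k+2i}$ is an inclusion''), and your sketched mechanism is the intended one: fiber products send subgraph inclusions to subgraph inclusions, and $\sigma$, $\sigma^{-1}$ and $\beta$ carry inclusions of precovers to inclusions (Lemmas~\ref{lem:1-to-1 correspondence between cores} and~\ref{lem: conjugate core}), so the two-step recursion applied to level-$(k+2i)$ graphs sitting inside level-$k$ graphs lands inside level-$(k+2)$-type graphs, giving inductively that every even level embeds in some level-$k$ graph, with the odd levels then handled as finitely many fiber products of finitely many types.
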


\begin{proof}
We first prove the first statement. 
The orientation and color preserving isomorphism types of graphs $Y_{\ell}$ correspond to combinatorial immersions $Y_{\ell}\to X_A$. 
By Lemma~\ref{lem:computing stabilizers} and Lemma~\ref{lem:stabilizers as intersections} there are finitely many $A$-conjugacy classes of $\Stab(\rho)$ for finite paths $\rho$ in $T$ joining two $A$-vertices with $\Stab(\rho(2k)) = A$ where $\ell = 2k$ or $2k+1$. By Lemma~\ref{lem: arbitrary paths} there are also only finitely many $A$-conjugacy classes of $\Stab(\rho)$ for an arbitrary path $\rho$ between $A$-vertices, and passing through the vertex stabilized by $A$.
By Remark~\ref{rem: path stabilizers}, every for every finite path $\rho'$ in $T$, $\Stab(\rho') = \Stab(\rho)$ where $\rho$ is the shortest path containing $\rho'$ joining two $A$-vertices.
We conclude that there are only finitely many $A$-conjugacy classes of groups of the form $A\cap \Stab(\rho)$ where $\rho$ is any finite path passing through the vertex . Thus the assumptions of Proposition~\ref{prop: conjugacy classes of path stabilizers} are satisfied. 
We deduce that $G_{MNP}$ has finite stature with respect to $\{A\}$.

Now let $k\geq 1$ such that $Y_{k+2}\to Y_k$ is an inclusion. Since $Y_{k+2}$ is obtained from $Y_k$ in two steps as described in Lemma~\ref{lem:computing stabilizers}, we deduce that $Y_{k+2(i+1)}\to Y_{k+2i}$ is an inclusion for each $i\geq 0$. In particular, there can only be finitely many orientation and color preserving isomorphism types of graphs $Y_{k+2i}$ since $Y_{k}$, as a finite graph, has only finitely many subgraphs. Using the formula for $Y_{k+2i+1}$ from Lemma~\ref{lem:computing stabilizers} we deduce that there are finitely many isomorphism types of graphs for any $\ell\geq 1$. The conclusion follows from the first part of the lemma.
\end{proof}

\subsection{Monochrome cycle preserving structure of splitting of Artin groups}

\begin{prop}\label{prop:artin monchrome cycles preserving}
Let $G_{MNP}$ be an Artin group $M,N,P\geq 3$. Then $G_{MNP}$ has a subgroup $G'$ of index at most $2$ that is the fundamental group of a monochrome cycles preserivng graph of graphs $X_A\xleftarrow{\phi} X_C\xrightarrow{\beta\cdot\phi } X_A$.
\end{prop}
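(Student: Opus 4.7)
The approach is to exhibit $G'$ either as $G_{MNP}$ itself (in the HNN case, where the given splitting is already of the required form with the underlying graph a single loop) or as the kernel of a canonical $\mathbb{Z}/2$-quotient of $G_{MNP}$ (in the amalgamated case), and then verify the three conditions of \emph{monochrome cycles preserving} by direct inspection of Figures~\ref{fig:mapCtoA},~\ref{fig:2MN} and~\ref{fig:2eveneven}.

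In the HNN case $G_{MNP}=A*_B$ of the second bullet of Theorem~\ref{thm:splitting2} (which, under the ordering $M\leq N\leq P$, arises when $M=2$ and $N,P$ are both even with $N,P\geq 4$), the splitting is already the fundamental group of a graph of graphs with a single vertex space $X_A$ and a single loop edge space $X_B$, with attaching maps $\phi_1,\phi_2$. Figure~\ref{fig:2eveneven} displays $\phi_1$ and $\phi_2$ as differing by a $\pi$-rotation $\beta$ of $X_B$, so setting $X_C:=X_B$ and $\phi:=\phi_1$ gives $\phi_2=\beta\cdot\phi$. In this case take $G':=G_{MNP}$, of index~$1$.

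In the amalgamated case $G_{MNP}=A*_CB$, use $[B:C]=2$ to obtain a surjection $B\twoheadrightarrow B/C\cong\mathbb{Z}/2$ that is trivial on $C$; since it is also trivial on $A$, it extends by the universal property of the amalgamated product to a surjection $\chi:G_{MNP}\to\mathbb{Z}/2$. Let $G':=\ker\chi$, of index exactly~$2$. I would then identify the induced graph-of-groups decomposition of $G'$ by Bass--Serre theory: at each valence-$2$ $B$-vertex of the Bass--Serre tree $T$ of $G_{MNP}$, the nontrivial coset $bC$ swaps the two adjacent edges, so a direct $G'$-orbit count on vertices and edges of $T$ gives that $G'\backslash T$ is a length-$2$ path with two endpoint vertices of stabilizer $A$ and a bivalent middle vertex of stabilizer $G'\cap B=C$, both edges having stabilizer $C$. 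Collapsing this bivalent middle vertex---whose vertex group coincides with both adjacent edge groups---produces an amalgamated-product decomposition $G'\cong A*_CA$ in which the two inclusions $C\hookrightarrow A$ are $\phi$ and $\phi$ precomposed with the inner automorphism of $C$ induced by $b\in B\setminus C$. On the level of spaces this inner automorphism is realized precisely by the $\pi$-rotation $\beta:X_C\to X_C$ of Theorems~\ref{thm:splitting}--\ref{thm:splitting2}, so the attaching maps become $\phi$ and $\beta\cdot\phi$.

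It remains to verify the three bullet points in the definition of monochrome cycles preserving. I would color the $1$-cells of $X_A$ and of $X_C$ (or $X_B$) by one of three symbols according to which of the three defining relators of $G_{MNP}$ each belongs to, and assign lengths $\ell_1,\ell_2,\ell_3$ equal to $2M,2N,2P$. Color preservation of $\phi$ is built into its construction in Figures~\ref{fig:mapCtoA},~\ref{fig:2MN} and~\ref{fig:2eveneven}, and color preservation of $\beta\cdot\phi$ then follows because $\beta$ is a combinatorial automorphism that permutes monochrome cycles within each color class. That each color class in $X_C$ is a disjoint union of embedded cycles, and that under the factorization $\phi=\iota\circ\sigma$ of Proposition~\ref{prop:factorization} each such cycle maps onto a single cycle of the prescribed length $\ell_i$ in $\overline X_C$, is visible case-by-case from the figures. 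The main obstacle is this last case analysis: the configuration of subdivided and collapsed edges in $X_C$ depends on the parities of $M,N,P$, giving four subcases in Figure~\ref{fig:mapCtoA}, two in Figure~\ref{fig:2MN}, and one in Figure~\ref{fig:2eveneven}; in each picture one must confirm that no color-class cycle fails to embed in $X_C$ or degenerates under $\sigma$. The hypothesis ``$M>2$ or $N>3$'' is precisely what excludes the small-exponent configurations where such degeneracies would arise.
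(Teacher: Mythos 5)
Your route is essentially the paper's own: in the amalgamated case you take $G'$ to be the kernel of the homomorphism $G_{MNP}\to\mathbb{Z}/2$ that kills $A$ and restricts on $B$ to the quotient $B\to B/C$, identify the induced splitting as $A*_CA$ by a Bass--Serre orbit count, and observe that the two inclusions of $C$ differ by conjugation by $b\in B\setminus C$, which is realized on graphs by the $\pi$-rotation $\beta$, so the attaching maps are $\phi$ and $\beta\cdot\phi$; the coloring and the three bullet conditions are then read off from Figures~\ref{fig:mapCtoA}, \ref{fig:2MN} and~\ref{fig:2eveneven}. This is exactly the paper's argument (your Bass--Serre derivation is more detailed than what the paper records, and your explicit treatment of the HNN case $A*_B$, where $G'=G_{MNP}$ itself serves and the underlying graph is a loop, addresses a case that the paper's proof, which begins ``Let $G_{MNP}=A*_CB$'', passes over in silence).

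There is, however, one concrete error in your verification step: the lengths. You declare $\ell_1,\ell_2,\ell_3$ to be $2M,2N,2P$, but under the factorization $\phi=\iota\circ\sigma$ of Proposition~\ref{prop:factorization} each monochrome cycle of $X_C$ factors through a cycle in $\overline{X}_C$ of length $Q$ when $Q$ is odd and $Q/2$ when $Q$ is even (for $Q=M,N,P$ respectively); these, not the boundary length $2Q$ of the defining $2Q$-gon of the presentation complex, are the lengths preserved by the immersion $\iota$ into $X_A$. With your choice of $\ell_i$ the third bullet of the definition of monochrome cycles preserving is simply false, so the case-by-case inspection you defer to would not confirm your claim; with the parity-dependent lengths it does, and nothing else in your argument changes. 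A second, smaller point: the hypothesis that $M>2$ or $N>3$ does not, as your closing sentence suggests, rule out ``degenerate'' configurations inside the figures; its only role is to guarantee that Theorem~\ref{thm:splitting} or Theorem~\ref{thm:splitting2} applies at all, i.e.\ it excludes the cases $(2,2,P)$ and $(2,3,P)$ for which no such splitting is available. Both slips are repairable without altering the structure of your proof.
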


\begin{proof} Let $G_{MNP} = A*_CB$ as in Theorem~\ref{thm:splitting}. Then $G_{MNP}$ has an index $2$ subgroup $G'$ which splits as $A*_CA$. The associated graph of graphs has two vertices with each vertex graph being a copy of $X_A$, and one edge graph $X_C$. We choose the coloring of $c_A:X_A\to \{\text{red}, \text{green}, \text{blue}\}$, where each loop has distinct color, as in Figure~\ref{fig:mapCtoA}. Those figure also show how the coloring is $c_C:X_C\to \{\text{red}, \text{green}, \text{blue}\}$ is defined. The two maps $X_C\to X_A$ differ by precomposing one with the automorphism $\beta$ of $X_C$. In particular, both maps $X_C\to X_A$ are orientation and color preserving, and the preimage of each color in $X_C$ is a union of disjoint embedded cycles. Moreover, the maps $X_C\to X_A$ both factor through $\overline X_C$, and in particular, both maps restricted to each cycle factors through a cycle of length $Q$ if $Q$ is odd, and $Q/2$ is $Q$ is even, for $Q = M,N,P$ respectively.
Thus the graphs of graphs $X_A\xleftarrow{\phi} X_C\xrightarrow{\beta\cdot\phi } X_A$ is orientation and monochrome cycles preserving.
\end{proof}

Every finite path $\rho$ in the Bass-Serre tree of $G_{MNP} = A*_CB$ joining a pair of $A$-vertices can be also thought of as a path in the Bass-Serre tree of the index $2$ subgroup $G' = A*_CA$ of $G_{MNP}$. By Proposition~\ref{prop:artin monchrome cycles preserving} above and Lemma~\ref{lem:preserved length of colored cycles}, for the combinatorial immersion  $Y_\rho \to X_A$ of $\Stab(\rho)$ the associated graph $\overline Y_\rho$ is a union of monochrome cycles, where each cycle of color $i$ has length $\ell_i$. We can denote the the $2$- complex obtained from $Y_\rho$ by attaching $2$-cells whose boundaries have color $i$ and length $\ell_i$ by $\widetriangle Y_\rho$, as in Section~\ref{sec:color respecting splittings}. 
\begin{notn}\label{notn}
We now switch to the use of notation of Lemma~\ref{lem:computing stabilizers}, where the graph $Y_\rho$ is denoted by $Y_{\ell}$ where $2\ell = |\rho|$, and the associated $K_{\ell}$ is the stabilizer $\Stab(\rho)$. We will also write $\widetriangle Y_{\ell}$ for $\widetriangle Y_\rho$. Once again, we remind that $Y_{\ell}$, $K_{\ell}$ depend not only on $\ell$, but also the choice of parameters $a_i, d_i$ in their definition, which are equivalent to the choice of $\rho$.
\end{notn}

\begin{lem}\label{lem: finite stature of twisted doubles}
If for some $\ell\geq 1$ a complex $\widetriangle Y_{\ell}$ is simply connected, then for every $\overline Y_{\ell+2}$, the combinatorial immersion  $\overline Y_{\ell+2}\to \overline Y_\ell$ is an embedding of a subgraph. In particular, if there exists $\ell\geq 1$ such that every $\widetriangle Y_\ell$ is simply connected, then $G_{MNP}$ has finite stature with respect to $\{A\}$.
\end{lem}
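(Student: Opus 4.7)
The plan is to deduce this lemma essentially as an immediate combination of Lemma~\ref{lem:simply connected Y triangle} and Lemma~\ref{lem: finitely many colored graphs}, using the identification from Notation~\ref{notn} between each $Y_\ell$ (together with its parameters $a_i, d_i$) and the core of a stabilizer $\Stab(\rho)$ for a path $\rho$ of length $2\ell$ in the Bass-Serre tree.

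For the first assertion, I would fix $\ell \geq 1$ and a particular choice of parameters so that $\widetriangle Y_\ell$ is simply connected, and let $\rho$ denote the corresponding length-$2\ell$ path, so that $\overline Y_\ell = \overline Y_\rho$ in the notation of Section~\ref{sec:color respecting splittings}. By Lemma~\ref{lem:computing stabilizers}, any $Y_{\ell+2}$ is obtained from $Y_\ell$ by two recursive steps (an odd step and an even step), which corresponds on the tree side to extending $\rho$ to a path $\rho'$ of length $2(\ell+2)$ by appending two more edges at one or both ends; in particular $\rho \subseteq \rho'$ and $\overline Y_{\ell+2} = \overline Y_{\rho'}$. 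Lemma~\ref{lem:simply connected Y triangle} then applies verbatim to yield that $\overline Y_{\ell+2} \to \overline Y_\ell$ is an embedding of a subgraph.

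For the second assertion, suppose now that every $\widetriangle Y_\ell$ (for all admissible parameters) is simply connected. Every precover $\overline Y_{\ell+2}$ is built on top of some $\overline Y_\ell$ as above, and by hypothesis that $\overline Y_\ell$ has $\widetriangle Y_\ell$ simply connected; the first assertion then shows $\overline Y_{\ell+2} \to \overline Y_\ell$ is an embedding of a subgraph for every such $\overline Y_{\ell+2}$. This is exactly the hypothesis of the second clause of Lemma~\ref{lem: finitely many colored graphs} with $k = \ell$, so we conclude that $G_{MNP}$ has finite stature with respect to $\{A\}$.

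The only genuinely nontrivial step is the bookkeeping in the first paragraph, namely checking that an arbitrary $Y_{\ell+2}$ produced by the recursion of Lemma~\ref{lem:computing stabilizers} really does come from a path $\rho'$ containing the path $\rho$ of the given $Y_\ell$; once this is pinned down, the conclusion is a direct invocation of the two prior lemmas and there is no further obstacle.
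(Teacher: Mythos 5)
Your proposal is correct and takes essentially the same approach as the paper: the first assertion is, as in the paper, a direct application of Lemma~\ref{lem:simply connected Y triangle}, with the identification of each $\overline Y_{\ell+2}$ as $\overline Y_{\rho'}$ for a path $\rho'$ containing the path $\rho$ of the given $\overline Y_\ell$ coming from the recursion in Lemma~\ref{lem:computing stabilizers}. For the second assertion the paper inlines the finiteness argument (finitely many color-preserving isomorphism types of subgraphs, then Proposition~\ref{prop: conjugacy classes of path stabilizers} applied to $G'$ followed by Proposition~\ref{prop:passing to finite index}) instead of citing the second clause of Lemma~\ref{lem: finitely many colored graphs} as you do, but that clause encapsulates exactly this argument, so the two proofs have the same content.
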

\begin{proof}
The first statement follows directly from Lemma~\ref{lem:simply connected Y triangle}.
Since there are only finitely many orientation and color preserving isomorphism types of $\overline Y_k$, there are also only finitely many orientation and color preserving isomorphism types of their subgraphs. Thus if all $Y_k$ are simply-connected, there are only finitely many orientation and color preserving isomorphism types of graphs that $\overline Y_{\rho}$ might have. It follows that there are only finitely many conjugacy classes of the groups of the form $G_{\tilde v}\cap \Stab(\rho)$. By Proposition~\ref{prop: conjugacy classes of path stabilizers} $G'$ has finite stature with respect to both copies of $A$. By Proposition~\ref{prop:passing to finite index} $G_{MNP}$ also has finite stature with respect to $\{A\}$.
\end{proof}

In the next subsections we apply Lemma~\ref{lem: finitely many colored graphs} or Lemma~\ref{lem: finite stature of twisted doubles} to prove that all the large type triangle Artin group have finite stature. We consider three cases:
\begin{enumerate}
\item[(Sec \ref{sec:at least one even})] at least one $M,N,P\geq 3$ is even and $\{M,N,P\}  \neq \{2m+1,4,4\}$ for $m\geq 1$,
\item[(Sec \ref{sec:two 4})]  $\{M,N,P\}  = \{2m+1,4,4\}$ where $m\geq 1$,
\item[(Sec \ref{sec:all odd})] all $M,N,P$ are odd and $\geq 3$.
\end{enumerate}
We also consider the case where one of the exponents is $2$, and the other two are both strictly greater than $3$:
\begin{enumerate}\setcounter{enumi}{3}
\item[(Sec \ref{sec:2})] $\{M,N,P\}$ where $M,N\geq 4$ and $P=2$. 
\end{enumerate}

The goal in all the cases is to prove that there are only finitely many orientation and color preserving isomorphism types of graphs $\overline Y_{\ell}$. In the remaining sections, we will just say an ``isomorphism'' in reference to an ``orientation and color preserving isomorphism''.

\subsection{Case where at least one of $M,N,P\geq 3$ is even and $\{M,N,P\} \neq \{2m+1,4,4\}$}\label{sec:at least one even}
In the next proof, we continue to use Notation~\ref{notn}.

\begin{prop}\label{prop: at least one even and not 2m+1,4,4}
Suppose 
$M, N, P\geq 3$ and at least one of them is even, but $\{M,N,P\} \neq \{2m+1,4,4\}$.
Then $G_{MNP}$ has finite stature with respect to $\{A\}$, where $A$ is as in Theorem~\ref{thm:splitting}.
\end{prop}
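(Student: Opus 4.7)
The plan is to apply Lemma \ref{lem: finite stature of twisted doubles}: it suffices to exhibit some $\ell \geq 1$ such that every associated $2$-complex $\widetriangle Y_\ell$ is simply connected. I would aim for the smallest such $\ell$, ideally $\ell = 1$ or $\ell = 2$, since higher fibre products rapidly get complicated, and because Lemma \ref{lem:preserved length of colored cycles} already gives strong control over the shape of $\overline Y_\ell$ regardless of $\ell$.

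First I would make $\widetriangle Y_1$ explicit. By Proposition \ref{prop:artin monchrome cycles preserving} together with Lemma \ref{lem:computing stabilizers}, $\overline Y_1 = \overline X_C$ is the graph displayed in Figure \ref{fig:mapCtoA}, a union of embedded monochrome cycles whose color-$i$ cycles all have length $\ell_i$, and $\widetriangle Y_1$ is obtained by capping off each such cycle with an $\ell_i$-gon. When at least one exponent is even, the corresponding $\sigma$-collapse in Figure \ref{fig:mapCtoA} shortens that cycle to $M/2$ (respectively $N/2$ or $P/2$), which is what keeps the local combinatorial geometry under control.

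If $\widetriangle Y_1$ is not yet simply connected, I would pass to $\widetriangle Y_2$. Using Lemma \ref{lem:computing stabilizers}, $\overline Y_2$ is read off from the fibre product $\overline Y_1 \otimes_{X_A} \overline Y_1$, and Lemma \ref{lem:preserved length of colored cycles} guarantees that the coloring and prescribed cycle lengths persist, so $\widetriangle Y_2$ is again assembled from polygons of the same $\ell_i$-type. To prove simple connectivity, the natural tool is a combinatorial Gauss--Bonnet argument: verify a link condition at each vertex (large enough girth in the link, controlled by $M, N, P$) so that any reduced disc diagram in $\widetriangle Y_\ell$ carries non-positive combinatorial curvature, contradicting the Euler characteristic $1$ of a disc unless the diagram is trivial.

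The main obstacle is the case analysis, organized by the parity pattern of $(M, N, P)$ (exactly one even, exactly two even, or all three even), together with the four schematic forms of $\overline X_C$ shown in Figure \ref{fig:mapCtoA}. The hypothesis that $\{M, N, P\} \neq \{2m+1, 4, 4\}$ is sharp for the link argument: the two collapsed $4$-gons in that excluded configuration produce a length-$2$ shortcut in the link of a vertex of $\widetriangle Y_1$ that defeats the curvature bookkeeping. In each of the remaining subcases the link girth is sufficient to close a Gauss--Bonnet estimate at $\ell = 1$ (or at worst at $\ell = 2$), which via Lemma \ref{lem: finite stature of twisted doubles} yields finite stature of $G_{MNP}$ with respect to $\{A\}$.
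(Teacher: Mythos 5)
Your reduction is the same as the paper's: by Theorem~\ref{thm:splitting} the group splits as $A*_CB$ with $[B:C]=2$, by Proposition~\ref{prop:artin monchrome cycles preserving} this is virtually the fundamental group of a monochrome cycles preserving graph of graphs, and the goal becomes showing that every $\widetriangle Y_2$ is simply connected so that Lemma~\ref{lem: finite stature of twisted doubles} applies. The gap is in the tool you propose for that last step. A Gauss--Bonnet/link-condition argument of the kind you describe runs backwards: showing that reduced disc diagrams in a $2$-complex carry nonpositive curvature is how one proves the complex is aspherical and its fundamental group is \emph{nontrivial} (the standard small-cancellation conclusion), not that it is simply connected. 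Simple connectivity of $\widetriangle Y_2$ is precisely the assertion that the monochrome cycles normally generate the free group $\pi_1(\overline Y_2)$, i.e.\ that the presentation whose relators are those cycles presents the trivial group --- exactly what a nonpositive-curvature hypothesis would rule out. Moreover, your intended conclusion ``every reduced disc diagram is trivial'' cannot hold in any complex whose $2$-cells are attached along reduced cycles: a single $2$-cell is already a nontrivial reduced disc diagram (and a disc has positive total curvature), so the link condition you posit can never be verified; and if it could, it would show that the nontrivial cycles of $\overline Y_2$ do not bound in $\widetriangle Y_2$, i.e.\ $\pi_1(\widetriangle Y_2)\neq 1$, the opposite of what is needed. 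What actually fills this step --- and is what the paper does --- is a finite explicit computation: the fiber product $\overline X_C\otimes_{X_A}\overline X_C$ is determined component by component and each capped component is checked by inspection to be simply connected; the paper imports this from \cite[Lem 5.1--5.4, Rem 5.5]{JankiewiczArtinRf}.

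Two smaller points. First, your reading of the excluded case $\{M,N,P\}=\{2m+1,4,4\}$ is also off: it is excluded not because curvature bookkeeping fails there, but because the conclusion itself fails --- a component of $\overline X_C\otimes_{X_A}\overline X_C$ (the left graph of Figure~\ref{fig:44(2p+1)}) has non-simply connected capping, and non-simply connected $\widetriangle Y_\ell$ persist (Lemma~\ref{lem:44(2p+1)} allows $\overline Y_4=\overline Y_3$), which is why the paper treats that case by the embedding criterion of Lemma~\ref{lem: finitely many colored graphs} rather than by Lemma~\ref{lem: finite stature of twisted doubles}. Second, your $\ell=1$ suggestion saves nothing: since $\iota:\overline X_C\to X_A$ is an immersion, the diagonal of $\overline X_C\otimes_{X_A}\overline X_C$ is a connected component isomorphic to $\overline X_C$, so any verification that all capped components of the fiber product are simply connected already contains the $\ell=1$ case.
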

\begin{proof} By Theorem~\ref{thm:splitting} in all the cases listed in the statement, $G_{MNP}$ splits as an amalgamated product $A*_C B$ of finite rank free groups where $[B:C] = 2$, which by Proposition~\ref{prop:artin monchrome cycles preserving} is virtually the fundamental group of a monochrome cycles preserving graph of graphs. By \cite[Lem 5.2, 5.3, 5.4]{JankiewiczArtinRf} (see also \cite[Rem 5.5]{JankiewiczArtinRf}) $\widetriangle Y_2$ is simply-connected, where $Y_2\to X_C$ if the core of $C\cap C^g$ with respect to $X_C$, as in Lemma~\ref{lem:computing stabilizers}. By Lemma~\ref{lem: finite stature of twisted doubles} $G_{MNP}$ has finite stature with respect to $\{A\}$.
\end{proof}
We note that the residual finiteness of the Artin groups considered above was also proven in \cite{JankiewiczArtinRf}.

\subsection{Case where $\{M,N,P\}  = \{2m+1,4,4\}$}\label{sec:(2m+1,4,4)}\label{sec:two 4}
We continue to use Notation~\ref{notn}.

\begin{lem}\label{lem:44(2p+1)}
Let $\{M,N,P\}  = \{2m+1,4,4\}$. Every graph $\overline Y_2$ is either the left graph in Figure~\ref{fig:44(2p+1)}, or has simply connected $\widetriangle Y_2$.
Every graph $\overline Y_3$ is either the right graph in Figure~\ref{fig:44(2p+1)}, or has simply connected $\widetriangle Y_3$.
The map $\overline Y_4\to \overline Y_2$ is always an embedding of a subgraph.
\begin{figure}
\includegraphics[scale=0.4]{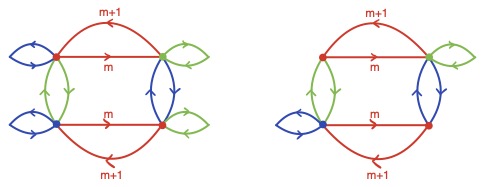}
\caption{$(M,N,P)  = (2m+1,4,4)$. The graph on the left is the fiber product $\overline Y_2 = \overline X_C\otimes_{X_A}\overline X_C$. The graph on the right is $\sigma\beta(Y_2)\otimes_{X_A} \overline Y_2$.}\label{fig:44(2p+1)}
\end{figure}
\end{lem}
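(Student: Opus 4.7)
The plan is to proceed by direct combinatorial case analysis on the fiber products prescribed by Lemma~\ref{lem:computing stabilizers}. Recall that $\overline{X}_C$, in the case $\{M,N,P\}=\{2m+1,4,4\}$, is the graph read off from Figure~\ref{fig:mapCtoA}(3), and $\overline{Y}_2 = \overline{X}_C\otimes_{X_A}\overline{X}_C$ where the two maps to $X_A$ differ by translation by an element $a_1\in A=F_3$. Up to the choice of basepoint, the fiber product is determined by which vertex of $\overline{X}_C$ is identified with which vertex of $\overline{X}_C$, i.e.\ by the coset of $a_1$ in the set of vertices of the core. Since $\overline{X}_C$ has only finitely many vertices mapping to the single vertex of $X_A$, the list of possible $\overline{Y}_2$'s is finite and explicit.

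First I would compute this finite list of possible $\overline{Y}_2$ graphs: for each alignment, draw the fiber product, decorate its edges with their colors (red/green/blue of lengths corresponding to $M,N,P$), and form the associated $2$-complex $\widetriangle{Y}_2$ by capping each monochrome cycle of color $i$ with an $\ell_i$-gon (using Lemma~\ref{lem:preserved length of colored cycles}). For all but one alignment, $\widetriangle{Y}_2$ is a tree of polygons --- hence simply connected --- since the two $4$-cycles of $\overline{X}_C$ meet the fiber product in "branching" patterns that do not close up nontrivial loops. The sole remaining alignment produces the left graph of Figure~\ref{fig:44(2p+1)}, establishing the first claim. Then I would repeat the analogous case analysis one level deeper: for each possibility for $\overline{Y}_2$ (either the exceptional graph or a graph with simply-connected $\widetriangle{Y}_2$), enumerate the possible twists producing $\overline{Y}_3 = \overline{Y}_2\otimes_{X_A}\sigma\beta(Y_2)$ and identify when $\widetriangle{Y}_3$ fails to be simply connected; this isolates the right graph of Figure~\ref{fig:44(2p+1)} as the unique exceptional $\overline{Y}_3$.

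For the final claim, I would split into two cases. Whenever $\widetriangle{Y}_2$ is simply connected, Lemma~\ref{lem:simply connected Y triangle} applied to the inclusion of the length-$4$ subpath into the length-$8$ path immediately gives that $\overline{Y}_4\to\overline{Y}_2$ is an embedding of a subgraph. When $\overline{Y}_2$ is the exceptional left graph of Figure~\ref{fig:44(2p+1)}, Lemma~\ref{lem:simply connected Y triangle} does not directly apply, and one must instead compute $\overline{Y}_4$ by hand: by the second part of the lemma, the intermediate $\overline{Y}_3$ is either the exceptional right graph of Figure~\ref{fig:44(2p+1)} or has simply-connected $\widetriangle{Y}_3$, and in either sub-case one reads off the fiber product $\overline{Y}_4=\overline{Y}_3\otimes_{X_A}\overline{Y}_3$ and checks by inspection that its image in $\overline{Y}_2$ is a subgraph.

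The main obstacle will be the exceptional sub-case in the final step: when $\overline{Y}_2$ is the left graph of Figure~\ref{fig:44(2p+1)}, since $\widetriangle{Y}_2$ has nontrivial $\pi_1$ one cannot appeal to the simply-connectedness shortcut and must verify the embedding directly from the explicit structure. The key leverage should be that although $\widetriangle{Y}_2$ is not simply connected, the nontrivial loops are all monochrome cycles of the correct length $\ell_i$, so any additional identification occurring in the fiber product defining $\overline{Y}_4$ would have to come from a non-monochrome cycle --- and the combinatorics of the two $4$-cycles and the $(2m+1)$-cycle in the exceptional graph prevent this, forcing the map to be injective.
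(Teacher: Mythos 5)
Your proposal is correct and takes essentially the same route as the paper's proof: enumerate the components of $\overline X_C\otimes_{X_A}\overline X_C$ to isolate the single exceptional $\overline Y_2$, compute $\sigma\beta(Y_2)$ and the fiber product $\overline Y_2\otimes_{X_A}\sigma\beta(Y_2)$ to isolate the exceptional $\overline Y_3$, and establish the embedding statement by applying Lemma~\ref{lem:simply connected Y triangle} when $\widetriangle Y_2$ is simply connected and otherwise checking directly that every component of $\overline Y_3\otimes_{X_A}\overline Y_3$ is either $\overline Y_3$ itself or has simply connected $\widetriangle Y_4$, hence maps to $\overline Y_2$ injectively. The only divergence is organizational: the paper imports the computation of $\overline Y_2$ from \cite[Lem 5.1]{JankiewiczArtinRf} rather than redoing the alignment-by-alignment enumeration, and it carries out the computation of $\sigma\beta(Y_2)$ (which your plan invokes but does not detail, and which requires passing through $Y_2=\sigma^{-1}(\overline Y_2)$ via Lemma~\ref{lem:1-to-1 correspondence between cores}) in explicit figures.
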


\begin{proof}
By Theorem~\ref{thm:splitting} in all the cases listed in the statement, $G_{MNP}$ splits as an amalgamated product $A*_C B$ of finite rank free groups where $[B:C] = 2$, which by Proposition~\ref{prop:artin monchrome cycles preserving} is virtually the fundamental group of a monochrome cycles preserving graph of graphs.

By Lemma~\ref{lem:computing stabilizers}, $\overline Y_2$ is computed as a connected component of the fiber product $\overline Y_1\otimes_{X_A}\overline Y_1$, which has been done in \cite[Lem 5.3]{JankiewiczArtinRf}. If $\widetriangle Y_2$ is simply-connected, then $\overline Y_4\to \overline Y_2$ is an embedding of a subgraph for every $Y_4$, by Lemma~\ref{lem: finite stature of twisted doubles}.

In the case where $\widetriangle Y_2$ is not simply connected, $Y_2$ is the graph on the left in Figure~\ref{fig:44(2p+1)}.
This graph has an order $2$ isomorphism which can be represented by swapping the top-left vertex with the bottom-left vertex, and the top-right vertex with the bottom-right vertex, and extending appropriately to the edges. 
The two maps $\overline Y_2\to \overline X_C$ (corresponding to the projection onto two components of the fiber product) differ by precomposing one with this symmetry, and so both are represented by the first vertical arrow in Figure~\ref{fig:proof 44M}. 
Lemma~\ref{lem:1-to-1 correspondence between cores} ensures that $Y_2\to X_C$ can be computed, which is done in the second vertical arrow in Figure~\ref{fig:proof 44M}. 
Then the rest of Figure~\ref{fig:proof 44M} represent the computation of $\sigma\beta(Y_2)\to \overline X_C$.
Finally, by Lemma~\ref{lem:computing stabilizers}, $\overline Y_3$  is computed as the fiber product  $\sigma\beta(Y_2)\otimes_{X_A} \overline Y_2$, i.e.\ the fiber product of the left top and the right top graphs in Figure~\ref{fig:proof 44M}. 
We deduce that $\overline Y_3$ either has simply connected $\widetriangle Y_3$, or it is the right graph in Figure~\ref{fig:44(2p+1)}.
\begin{figure}
\includegraphics[scale=0.4]{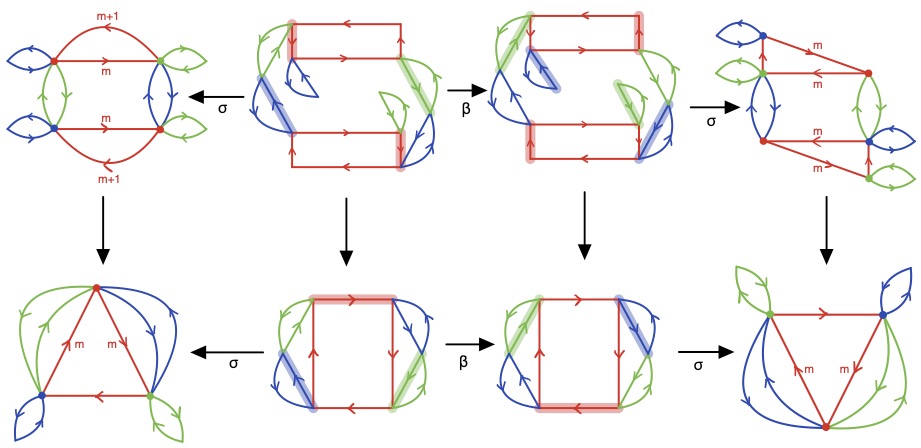}
\caption{$(M,N,P)  = (2m+1,4,4)$. The vertical arrows are respectively: $\overline Y_2 \to \overline X_C$, $Y_2\to X_C$, $\beta(Y_2)\to X_C$, and $\sigma\cdot \beta(Y_2) \to \overline X_C$. 
}
\label{fig:proof 44M}
\end{figure}

If $\widetriangle Y_3$ is simply-connected, then so is $\widetriangle Y_4$ and $\overline Y_4\to \overline Y_2$ is  an embedding of a subgraph, as required.
Otherwise, $\overline Y_4$ is a connected component of $\overline Y_3\otimes_{X_A} \overline Y_3$ by Lemma~\ref{lem:computing stabilizers}. Note that each connected component $\overline Y_4$ is either equal to $\overline Y_3$, or has simply connected $\widetriangle Y_4$, and in particular, the map $\overline Y_4\to \overline Y_2$ is an embedding of a subgraph. 
\end{proof}

Combining Lemma~\ref{lem:44(2p+1)} and Lemma~\ref{lem: finitely many colored graphs} yields the following.

\begin{prop}\label{cor:442m+1}
The Artin group $G_{MNP}$ where $M = 2m+1\geq 3$ and $N= P=4$ has finite stature with respect to $\{A\}$, where $A$ is as described in Theorem~\ref{thm:splitting}. 
\end{prop}

\subsection{Case where $M,N,P\geq 3$ are all odd}\label{sec:all odd}
First consider the case where $M=N=P=3$. 

\begin{prop}\label{lem:333} 
Let $(M,N,P) =(3,3,3)$, and let $T$ be the Bass-Serre tree of the splitting $G_{333} = A*_CB$. Then for every path $\rho$ in $T$, $\Stab(\rho) = C$.\end{prop}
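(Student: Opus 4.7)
My plan is to show that $C$ is actually a \emph{normal} subgroup of $G_{333}$. Once this is established the conclusion follows immediately: every edge stabilizer in $T$ is a $G_{333}$-conjugate of $C$, so normality forces every edge stabilizer to equal $C$, and hence every path stabilizer $\Stab(\rho) = \bigcap_{e\subseteq \rho}\Stab(e)$ also equals $C$.

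To prove $C \triangleleft G_{333} = A *_C B$ it suffices to verify normality in each vertex group separately, since $G_{333}$ is generated by $A \cup B$. Normality of $C$ in $B$ is automatic from $[B:C] = 2$, so the whole task reduces to showing $C \triangleleft A$. By Theorem~\ref{thm:splitting} we have $A \simeq F_3$ and $C \simeq F_7$, and Nielsen--Schreier forces $[A:C] = 3$ (from $\euler(C) = -6 = 3\,\euler(A)$).

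The inclusion $C \hookrightarrow A$ is represented by the combinatorial immersion $\iota\colon \overline X_C \to X_A$ of Theorem~\ref{thm:splitting}. I would inspect case~(1) of Figure~\ref{fig:mapCtoA} with $m = n = p = 1$ and verify that $\overline X_C$ in the $(3,3,3)$ case consists of exactly $3$ vertices, each of valence $6$ (matching the valence of the unique vertex of $X_A$); equivalently, each of the three colors forms a single embedded triangle on the same three vertices. This upgrades $\iota$ from a mere immersion to a covering of degree $3$. Regularity of this cover follows from the manifest three-fold symmetry of the $(3,3,3)$ presentation: the cyclic permutation $a \to b \to c \to a$ of the generators induces the cyclic permutation of the three colors on $X_A$ and lifts to a free $\mathbb Z/3$-action on $\overline X_C$ commuting with $\iota$. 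Consequently $\iota$ is the regular cover of $X_A$ associated to a surjection $A \twoheadrightarrow \mathbb Z/3$, so $C \triangleleft A$ with quotient $\mathbb Z/3$, and combining with $C \triangleleft B$ yields $C \triangleleft G_{333}$ as required.

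The main obstacle is the explicit combinatorial check that $\overline X_C$ in the $(3,3,3)$ case has exactly three vertices of uniform valence six and that the color permutation $\mathbb Z/3$-action genuinely descends to a fixed-point-free deck transformation of $\overline X_C$. Both are routine inspections of Figure~\ref{fig:mapCtoA} with $m=n=p=1$, but they are the only nonformal ingredients in the argument.
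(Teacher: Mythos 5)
Your overall route is the same as the paper's: the paper's proof is precisely the observation that $C$ is normal in both $A$ and $B$ (normality in $B$ coming from $[B:C]=2$), so that every $G_{333}$-conjugate of $C$ equals $C$, hence every edge stabilizer, and therefore every path stabilizer $\Stab(\rho)=\bigcap_{e\subseteq\rho}\Stab(e)$, equals $C$. Your combinatorial reading of Figure~\ref{fig:mapCtoA} is also correct, and it is exactly the content the paper leaves implicit: for $m=n=p=1$ the graph $\overline X_C$ has three vertices and nine edges, each color class is a single embedded triangle through all three vertices, and $\iota\colon \overline X_C\to X_A$ is a degree-$3$ covering of the wedge of three loops (rank considerations rule out any other configuration). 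One minor point: the claim that ``Nielsen--Schreier forces $[A:C]=3$'' is backwards as stated, since $F_3$ has rank-$7$ subgroups of infinite index; the index formula applies only once finite index is known, which is what your covering check supplies.

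The more serious issue is that your regularity step does not work as written. The symmetry $a\to b\to c\to a$ permutes the three colors, so any automorphism $\tau$ of $\overline X_C$ it induces satisfies $\iota\circ\tau=\pi\circ\iota$, where $\pi$ is the nontrivial rotation of the three petals of $X_A$; such a $\tau$ is \emph{not} a deck transformation (deck transformations satisfy $\iota\circ\tau=\iota$, and here they preserve each color, rotating each monochrome triangle). Equivariance over the petal rotation genuinely fails to imply regularity: the irregular degree-$3$ cover of the wedge of three loops with monodromy permutations $(1\,2)$, $(1\,3)$, $(2\,3)$ admits a free order-$3$ automorphism covering the petal rotation (simultaneous conjugation by $(1\,3\,2)$ returns the rotated triple), yet its monodromy group is all of $S_3$, so it is not regular. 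The fix is immediate from the picture you already verified: since each color class of $\overline X_C$ is a single embedded $3$-cycle, the monodromy of each petal of $X_A$ is a $3$-cycle in $S_3$, so the monodromy group is $A_3\cong\mathbb{Z}/3$, whose order equals the degree; hence the cover is regular, i.e.\ $C=\ker(A\twoheadrightarrow\mathbb{Z}/3)$ is normal in $A$, and the rest of your argument then goes through.
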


\begin{proof} Indeed, in this case $C$ is normal in both $A$ and $B$, so all $G_{333}$-conjugates of $C$ are equal $C$. This proves that all edge stabilizers in the action of $G_{333}$ on $T$ are equal $C$.
\end{proof}

For the remaining cases, we will apply Lemma~\ref{lem: finitely many colored graphs} to deduce that $G_{MNP}$ has finite stature with respect to $\{A\}$, similarly as in Section~\ref{sec:(2m+1,4,4)} 
We now consider the case where $M,N,P$ are all at least $5$. We continue to use Notation~\ref{notn}.
\begin{lem}\label{lem:three odd}
Let $M,N,P\geq 5$ be all odd. Every graph $\overline Y_2$ is either the left graph in Figure~\ref{fig:three odd}, or has simply connected $\widetriangle Y_2$.
Also, every graph $\overline Y_3$ is either the right graph in Figure~\ref{fig:three odd}, or has simply connected $\widetriangle Y_3$.
The map $\overline Y_4\to \overline Y_2$ is always an embedding of a subgraph.
 
\begin{figure}
\includegraphics[scale=0.4]{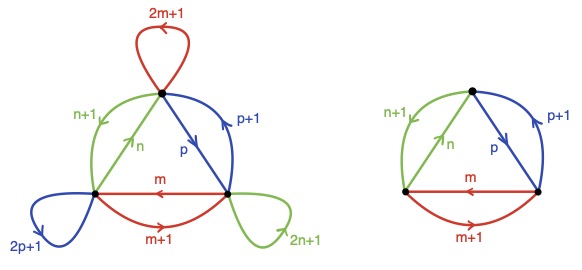}
\caption{$(M,N,P) = (2m+1, 2n+1, 2p+1)$. 
The graph on the left is the fiber product $\overline Y_2 = \overline X_C\otimes_{X_A}\overline X_C$. The graph on the right is $\sigma\beta(Y_2)\otimes_{X_A} \overline Y_2$.}\label{fig:three odd}
\end{figure}
\end{lem}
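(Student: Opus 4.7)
The proof should follow the same template as Lemma~\ref{lem:44(2p+1)}, adapted to the all-odd regime. The plan is to analyze $\overline Y_2$, then $\overline Y_3$, and then observe that $\overline Y_4 \to \overline Y_2$ is forced to be a subgraph embedding.

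First, by Theorem~\ref{thm:splitting}, $G_{MNP}=A*_CB$ with $[B:C]=2$, and Proposition~\ref{prop:artin monchrome cycles preserving} gives a monochrome cycles preserving structure on an index-$2$ subgroup. By Lemma~\ref{lem:computing stabilizers}, $\overline Y_2$ is a connected component of the fiber product $\overline X_C \otimes_{X_A} \overline X_C$. I would invoke (or redo) the case analysis from \cite[Lem 5.1]{JankiewiczArtinRf} for the all-odd case: inspect all connected components of this fiber product and check, for each, whether the $2$-complex $\widetriangle Y_2$ obtained by filling in monochrome $M$-, $N$-, $P$-gons is simply connected. Under the assumption $M,N,P \geq 5$ the only exception is the component shown on the left of Figure~\ref{fig:three odd}; all other components have $\widetriangle Y_2$ simply connected, and in that case Lemma~\ref{lem: finite stature of twisted doubles} immediately gives that $\overline Y_4 \to \overline Y_2$ is an embedding of a subgraph.

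Next I would handle the single exceptional $\overline Y_2$. Using Lemma~\ref{lem:1-to-1 correspondence between cores}, recover $Y_2 \to X_C$ by reversing the subdivisions/collapses of $\sigma$ (edge by edge, consulting Figure~\ref{fig:mapCtoA}). Then apply the $\pi$-rotation $\beta$ to obtain $\beta(Y_2) \to X_C$ and re-fold via $\sigma$ to obtain $\sigma\beta(Y_2) \to \overline X_C$ --- this is the all-odd analogue of Figure~\ref{fig:proof 44M}. By Lemma~\ref{lem:computing stabilizers}, $\overline Y_3$ is a connected component of $\sigma\beta(Y_2) \otimes_{X_A} \overline Y_2$. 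Enumerate the connected components of this fiber product and, for each, check whether $\widetriangle Y_3$ is simply connected. The claim is that the only component avoiding simple connectivity is the one pictured on the right of Figure~\ref{fig:three odd}.

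Finally, consider $\overline Y_4$. If $\widetriangle Y_3$ is simply connected, then $\widetriangle Y_4$ is simply connected as well (since any $\widetriangle Y_{\ell+1}$ sits inside $\widetriangle Y_\ell$ via its defining fiber product), and $\overline Y_4 \to \overline Y_2$ is an embedding by Lemma~\ref{lem:simply connected Y triangle}. In the remaining case $\overline Y_3$ is the right graph in Figure~\ref{fig:three odd}, so $\overline Y_4$ is a component of $\overline Y_3 \otimes_{X_A} \overline Y_3$; a direct inspection (using the symmetry of $\overline Y_3$) shows each such component is either isomorphic to $\overline Y_3$ itself or has $\widetriangle Y_4$ simply connected. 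In both sub-cases the induced map $\overline Y_4 \to \overline Y_2$ is an embedding of a subgraph.

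The main obstacle will be the bookkeeping in the middle step: the fiber product $\sigma\beta(Y_2)\otimes_{X_A}\overline Y_2$ and its simple-connectivity check depend on the parities $M=2m{+}1$, $N=2n{+}1$, $P=2p{+}1$ simultaneously, and one must verify uniformly (for all $m,n,p\geq 2$) that no new exceptional component appears besides the one shown. The hypothesis $M,N,P\geq 5$ enters precisely here: it guarantees that cycles of length $M,N,P$ in $\widetriangle Y_3$ are long enough that any non-trivial loop in the $1$-skeleton that is \emph{not} the boundary of a single filled polygon is already killed by the relations, so the dichotomy is clean. Once these two figures are established, Lemma~\ref{lem: finitely many colored graphs} applied to the conclusion about $\overline Y_4 \to \overline Y_2$ yields finite stature in the corollary that follows.
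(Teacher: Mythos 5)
Your strategy is the right one, and it is essentially the paper's: quote \cite[Lem 5.1]{JankiewiczArtinRf} for the classification of $\overline Y_2$, recover $Y_2\to X_C$ via Lemma~\ref{lem:1-to-1 correspondence between cores}, apply $\beta$ and refold to get $\sigma\beta(Y_2)\to \overline X_C$, classify the components of $\overline Y_2\otimes_{X_A}\sigma\beta(Y_2)$ to obtain the dichotomy for $\overline Y_3$, and then handle $\overline Y_4$ exactly as in Lemma~\ref{lem:44(2p+1)}. Your closing step (each component of $\overline Y_3\otimes_{X_A}\overline Y_3$ is either $\overline Y_3$ itself or has simply connected $\widetriangle Y_4$, so $\overline Y_4\to\overline Y_2$ embeds) is also how the paper concludes.

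There is, however, one concrete gap: you treat the exceptional $\overline Y_2$ as a \emph{single} object with a well-defined lift $Y_2\to X_C$ (``handle the single exceptional $\overline Y_2$''). In the all-odd case the fiber product $\overline X_C\otimes_{X_A}\overline X_C$ has \emph{two} connected components for which $\widetriangle Y_2$ is not simply connected. They are color-preserving isomorphic to each other --- both are the left graph of Figure~\ref{fig:three odd}, which is why the statement of the lemma mentions only one graph --- but they are distinct precovers of $\overline X_C$: their maps to $\overline X_C$ differ (in the paper's Figure~\ref{fig:three odd proof} they are distinguished by the vertex colorings). This matters because $\sigma\beta(Y_2)$ is not determined by the abstract colored graph: $\beta$ is an automorphism of $X_C$, so computing $\beta(Y_2)$ uses the map $Y_2\to X_C$, and the two precovers need not have the same image under $\sigma\beta$. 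Hence the classification of $\overline Y_3$ requires running the fiber-product computation separately for each of the two precovers --- this is exactly why Figure~\ref{fig:three odd proof} has two rows --- and a priori each run could produce its own exceptional components. As written, your plan performs only one of these two computations, so the dichotomy for $\overline Y_3$, and with it the embedding claim for $\overline Y_4\to\overline Y_2$ and the application of Lemma~\ref{lem: finitely many colored graphs}, is only half-established. The fix is mechanical (do the second computation; the paper checks both precovers lead to the same right-hand graph of Figure~\ref{fig:three odd}), but the uniqueness assumption itself is false and must be removed.
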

\begin{proof}
We write $M=2m+1$, $N=2n+1$, and $P=2p+1$. 
By Theorem~\ref{thm:splitting} in all the cases listed in the statement, $G_{MNP}$ splits as an amalgamated product $A*_C B$ of finite rank free groups where $[B:C] = 2$, which by Proposition~\ref{prop:artin monchrome cycles preserving} is virtually the fundamental group of a monochrome cycles preserving graph of graphs.

The first part of the lemma was proven in \cite[Lem 5.1]{JankiewiczArtinRf}. In order to prove the second part we start with computing $\sigma\beta(Y_2)$, which is illustrated in Figure~\ref{fig:three odd proof}. 
\begin{figure}
\includegraphics[scale=0.35]{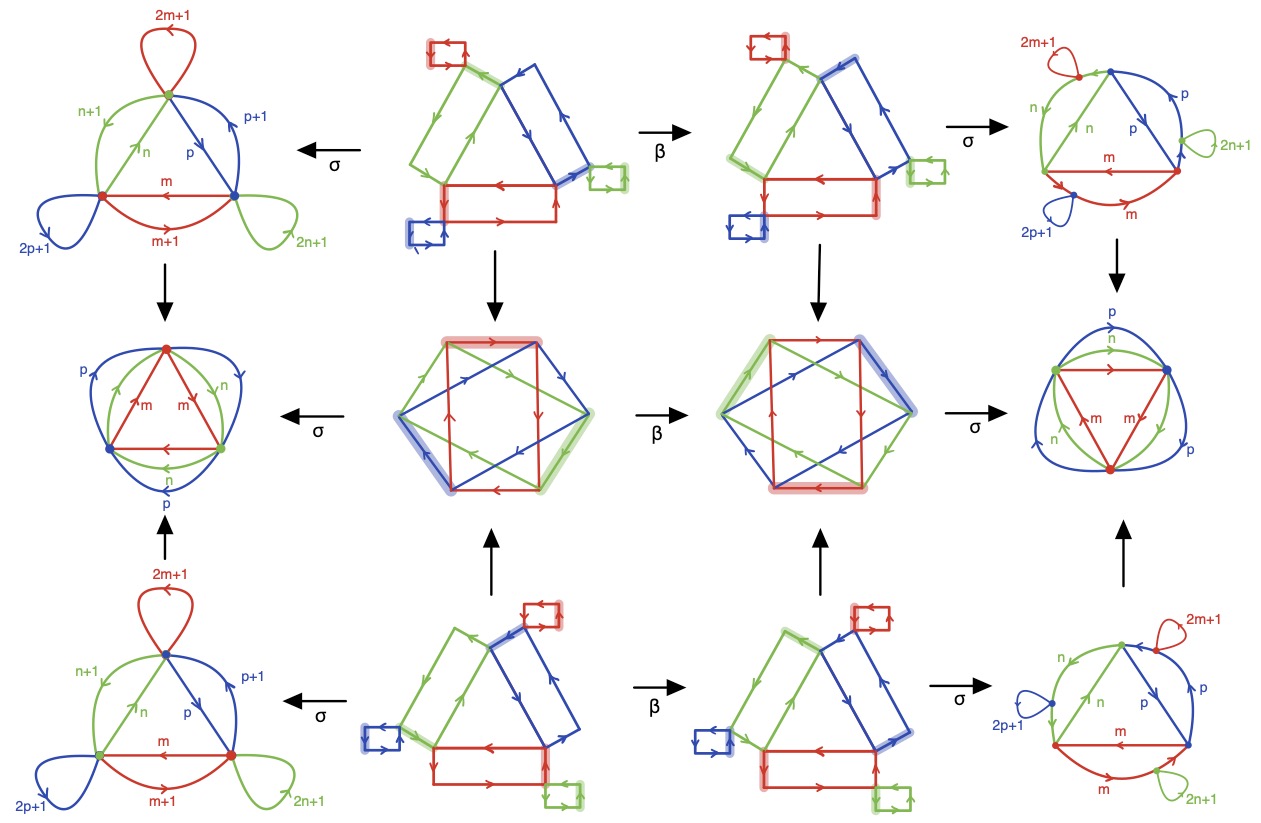}
\caption{$(M,N,P) = (2m+1, 2n+1, 2p+1)$. 
Each of the two rows of vertical arrows corresponds to respectively: $\overline Y_2\to \overline X_C$, $Y_2\to X_C$, $\beta(Y_2)\to X_C$, and $\sigma \beta(Y_2) \to \overline X_C$. 
}\label{fig:three odd proof}
\end{figure}
We note that there are two connected components $\overline Y_2$ of the fiber product $\overline X_C\otimes_{X_A}\overline X_C$ for which $\widetriangle Y_2$ is not simply connected. They are both isomorphic to the left graph in Figure~\ref{fig:three odd}, but their maps to $X_C$ are different. The first column of Figure~\ref{fig:three odd proof} shows the two combinatorial immersions $\overline Y_2\to \overline X_C$ (they are determined by the coloring of the vertices). For each $Y_2$, we compute $\sigma\beta(Y_2)$, in a similar manner as in Lemma~\ref{lem:44(2p+1)}, see the rest of Figure~\ref{fig:three odd proof}. In each case, we deduce that each connected component $\overline Y_3$ of $\overline Y_2\otimes_{X_A}\sigma\beta(Y_2)$ either has simply connected $\widetriangle Y_3$, or it is the right graph in Figure~\ref{fig:three odd}. In either case, we every map $\overline Y_4\to \overline Y_2$ is an embedding of a subgraph by a reasoning similar to one in  Lemma~\ref{lem:44(2p+1)}.
\end{proof}

We now move to the case where one or two of $M,N,P$ are equal to $3$.
Unlike in the previous case, the computation of the fiber product $\overline X_C\otimes_{X_A}\overline X_C$ in such cases was not included in \cite{JankiewiczArtinRf}. We start with that computation.
\begin{lem}\label{lem:fiberprod}
Suppose one or two of $M,N,P$ are equal to $3$. Every connected component $Y_2$ of $\overline X_C\otimes_{X_A}\overline X_C$ either has simply connected $\widetriangle Y_2$ or is 
\begin{itemize}
\item the left graph in Figure~\ref{fig:fiber product with some 3s}, when $M=3$ and $N, P\geq 5$,
\item the right graph in Figure~\ref{fig:fiber product with some 3s}, when $M=N=3$ and $P\geq 5$,
\end{itemize}
\begin{figure}
\includegraphics[scale=0.4]{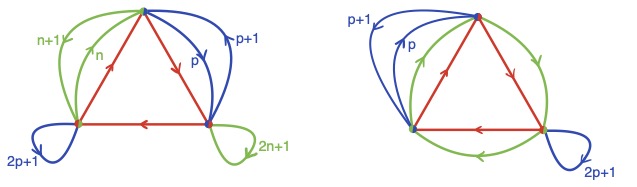}
\caption{$(M,N,P) = (2m+1, 2n+1, 2p+1)$. 
A connected component of $\overline X_C\otimes_{X_A}\overline X_C$, when $M=3$ and $N ,P\geq 5$ (left), $M=N=3$ and $P\geq 5$ (right).}\label{fig:fiber product with some 3s}
\end{figure}
\end{lem}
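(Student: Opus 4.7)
The plan is to directly compute all connected components of $\overline X_C\otimes_{X_A}\overline X_C$, in the spirit of the proof of \cite[Lem 5.1]{JankiewiczArtinRf} which treats the case $M,N,P\geq 5$. The novelty here is that when an exponent equals $3$, the corresponding monochrome cycle in $\overline X_C$ is so short that additional small components appear in the fiber product; the graphs listed in Figure~\ref{fig:fiber product with some 3s} are meant to be exactly those components whose associated $2$-complex $\widetriangle Y_2$ fails to be simply connected.

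First I would fix coordinates on $\overline X_C$ using Figure~\ref{fig:mapCtoA}(1), the relevant all-odd case, labelling the vertices of each monochrome cycle by integers modulo its length, and doing the same for the three loops of $X_A$. By Proposition~\ref{prop:artin monchrome cycles preserving}, the map $\iota: \overline X_C\to X_A$ is color-preserving and wraps each monochrome cycle around the corresponding loop of $X_A$ at constant speed. This reduces the fiber product to a combinatorial bookkeeping problem: for each pair of monochrome cycles of the same color in $\overline X_C$, list the pairs of vertices with matching image in $X_A$, then trace how edges connect them.

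Next, for each of the two sub-cases, $M=3$ with $N,P\geq 5$, and $M=N=3$ with $P\geq 5$, I would enumerate the resulting connected components and check which have simply-connected $\widetriangle Y_2$. The useful criterion is that a component whose underlying graph is a tree of monochrome cycles automatically has simply-connected $\widetriangle Y_2$, since each cycle bounds its attached polygon. Hence it suffices to isolate those components whose underlying graphs carry more than one independent cycle beyond the monochrome ones, and verify by inspection that they match the graphs of Figure~\ref{fig:fiber product with some 3s}.

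The main obstacle will be the casework bookkeeping: the short cycles created by the exponent $3$ proliferate in the fiber product, and one has to patiently track each connected component, recognize its homotopy type after attaching polygonal $2$-cells of colors with prescribed lengths, and confirm that precisely the two families pictured survive as non-simply-connected. Once the enumeration is carried out cleanly, the conclusion of the lemma follows directly from the fiber-product description.
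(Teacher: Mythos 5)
Your proposal is correct and takes essentially the same route as the paper, whose entire proof is a direct computation of the fiber product $\overline X_C\otimes_{X_A}\overline X_C$ with the verification deferred to Figure~\ref{fig:fiber product with some 3s}, where the vertices are bi-colored exactly to support the matching-vertex bookkeeping you describe. Your added observation that a component which is a tree of monochrome cycles automatically has simply connected $\widetriangle Y_2$ is a sound way to organize the inspection step, but it is not a different method.
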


\begin{proof}
This is a direct computation. We remind that the graph $\overline X_C$ is the middle graph in the first row of Figure~\ref{fig:mapCtoA}. In Figure~\ref{fig:fiber product with some 3s} we bi-colored the vertices of the graphs (i.e.\ colored with a pair of colors) to make it easier for the reader to verify the computation.
\end{proof}

Now our goal is to show that $\overline Y_{\ell+2}\to \overline Y_{\ell}$ is an embedding of a subgraph for some $\ell$, so we can apply Lemma~\ref{lem: finitely many colored graphs}. The case where exactly one of $M,N,P$ is equal $3$ is considered first.

\begin{lem}\label{lem:3(2n+1)(2p+1)}
Let $N,P \geq 5$ be odd, and $M=3$. 
Every $\overline Y_3$ either has a simply connected $\widetriangle Y_3$, or is one of the graphs in  in Figure~\ref{fig:3(2n+1)(2p+1)proof2}. Moreover, the map $\overline Y_5\to \overline Y_3$ is always an embedding of a subgraph.
   \begin{figure}
\includegraphics[scale=0.7]{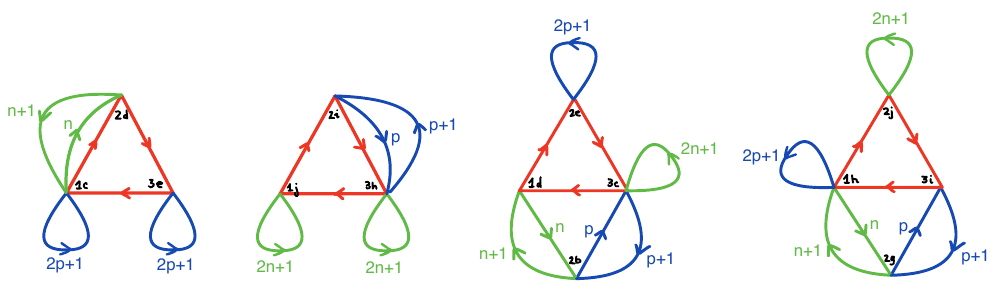}
\caption{$(M,N,P) = (3, 2n+1, 2p+1)$. All the connected components $\overline Y_3$ of $\overline Y_2\otimes_{X_A}\sigma\beta(Y_2)$ either has simply connected $\widetriangle Y_3$, or is one of the graphs pictured above. 
The labels of vertices  are $1c$, $2d$ etc, where the number corresponds to a vertex of $\overline Y_2$ and the letter corresponds to a vertex of $\sigma\beta(Y_2)$ (see Figure~\ref{fig:3(2n+1)(2p+1)proof}).}\label{fig:3(2n+1)(2p+1)proof2}
\end{figure}
 \end{lem}
 \begin{proof}
 We write $N=2n+1$ and $P=2p+1$. By Lemma~\ref{lem:fiberprod}, every $Y_2$ either has simply connected $\widetriangle Y_2$ or is the left graph in Figure~\ref{fig:fiber product with some 3s}. 
There are two components of $\overline X_C\otimes_{X_A}\overline X_C$ isomorphic to the left graph in Figure~\ref{fig:fiber product with some 3s}, each with a map to $\overline X_C$. 
For each of them we compute $\sigma\beta(Y_2)$ similarly as in Lemma~\ref{lem:three odd} and Lemma~\ref{lem:3(2n+1)(2p+1)}. This is illustrated in Figure~\ref{fig:3(2n+1)(2p+1)proof}. 
   \begin{figure}
\includegraphics[scale=0.65]{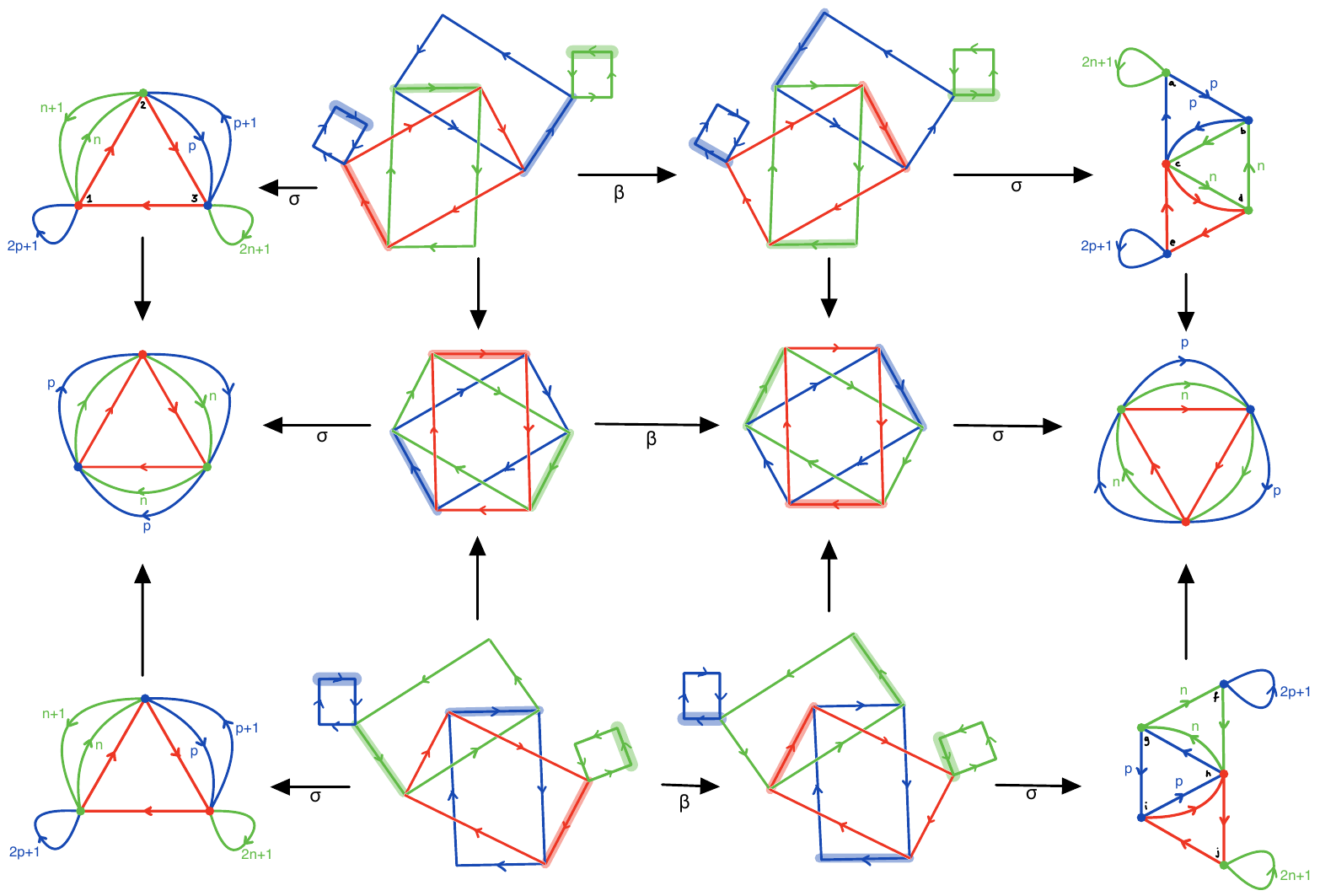}
\caption{$(M,N,P) = (3, 2n+1, 2p+1)$. Each of the two rows of vertical arrows corresponds to respectively: $\overline Y_2\to \overline X_C$, $Y_2\to X_C$, $\beta(Y_2)\to X_C$, and $\sigma \beta(Y_2) \to \overline X_C$.}\label{fig:3(2n+1)(2p+1)proof}
\end{figure}
Next, for each of the two choices of $\sigma\beta(Y_2)$ (as illustrated in Figure~\ref{fig:fiber product with some 3s}) we compute the fiber product $\overline Y_2\otimes \sigma\beta(Y_2)$, whose connected component yield $\overline Y_3$. 
The labelling of the vertices in the top left, top right and the bottom right graph in Figure~\ref{fig:fiber product with some 3s}, will help the reader to verify that every connected component $\overline Y_3$ of those fiber products are either pictured in Figure~\ref{fig:3(2n+1)(2p+1)proof2} or has simply-connected $\widetriangle Y_3$.

Finally, we compute the fiber product of the pairs of graphs from Figure~\ref{fig:3(2n+1)(2p+1)proof2}, which yield $\overline Y_4$. The only $\overline Y_4$ with non-simply-connected $\widetriangle Y_4$ is the top-left graph in Figure~\ref{fig:3(2n+1)(2p+1)proof3}, which in particular embeds in appropriate $\overline Y_3$ and is invariant under $\sigma\beta\sigma^{-1}$ (as verified in Figure~\ref{fig:3(2n+1)(2p+1)proof3}). Thus every $\overline Y_5\to \overline Y_3$ is an embedding.  

\begin{figure}
\includegraphics[scale=0.7]{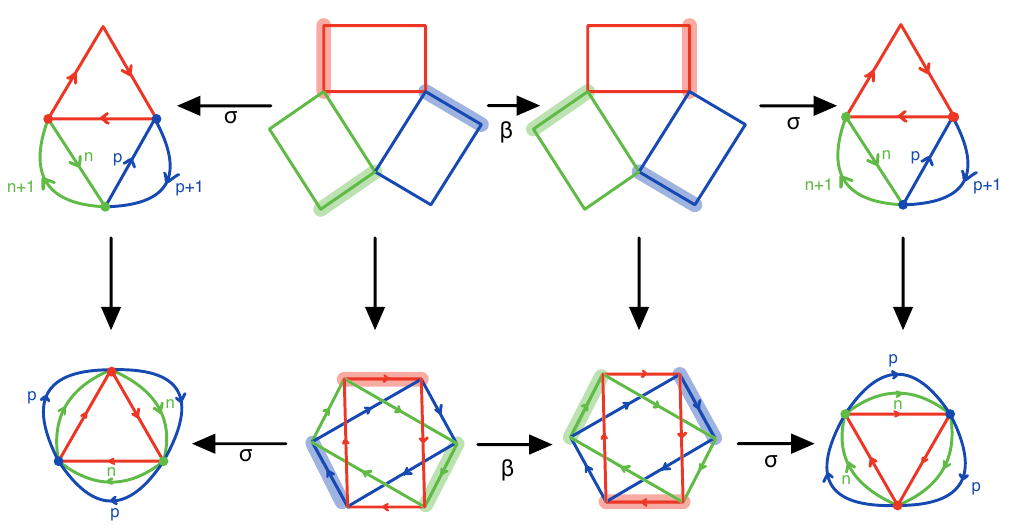}
\caption{$(M,N,P) = (3, 2n+1, 2p+1)$. The vertical arrows corresponds to respectively: $\overline Y_4\to \overline X_C$, $Y_4\to X_C$, $\beta(Y_4)\to X_C$, and $\sigma \beta(Y_4) \to \overline X_C$.}\label{fig:3(2n+1)(2p+1)proof3}
\end{figure}
\end{proof}

In the remaining case exactly two of $M,N,P$ are equal $3$.
\begin{lem}\label{lem:33(2p+1)}
Let $P \geq 5$ be odd, and $M=N=3$. 
Every $\overline Y_3$ either has simply connected $\widetriangle Y_3$, or is one of the graphs in Figure~\ref{fig:33(2p+1)proof2}.
\begin{figure}
\includegraphics[scale=0.4]{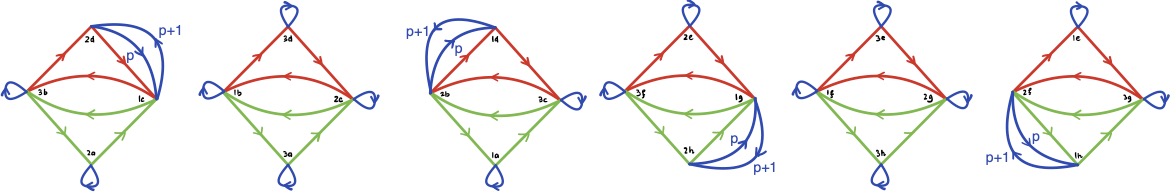}
\caption{$(M,N,P) = (3, 3, 2p+1)$. Each unlabelled blue loop has length $2p+1$.}\label{fig:33(2p+1)proof2}
\end{figure}
Moreover, the map $\overline Y_5\to \overline Y_3$ is always an embedding of a subgraph.
\end{lem}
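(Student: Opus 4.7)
The plan is to mirror the strategy of Lemma~\ref{lem:three odd}, Lemma~\ref{lem:44(2p+1)}, and especially Lemma~\ref{lem:3(2n+1)(2p+1)}, but to push the analysis one step further because, unlike in those cases, not every $\widetriangle Y_3$ will turn out to be simply connected. I start by invoking Lemma~\ref{lem:fiberprod}: every connected component $Y_2$ of $\overline X_C\otimes_{X_A}\overline X_C$ either has $\widetriangle Y_2$ simply connected, in which case Lemma~\ref{lem: finite stature of twisted doubles} (via Lemma~\ref{lem:simply connected Y triangle}) immediately gives that every map $\overline Y_{\ell+2}\to \overline Y_\ell$ is an embedding of a subgraph for $\ell\geq 2$, or $Y_2$ is color-preserving isomorphic to the right graph in Figure~\ref{fig:fiber product with some 3s}. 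So for the remainder I restrict to this exceptional $Y_2$.

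Next I compute $\sigma\beta(Y_2)\to \overline X_C$ for each of the finitely many $Y_2\to X_C$ realizing this isomorphism type (there are several, distinguished by the bicoloring of the vertices), in exactly the manner of Figure~\ref{fig:3(2n+1)(2p+1)proof}: first lift $\overline Y_2\to \overline X_C$ to $Y_2\to X_C$ via Lemma~\ref{lem:1-to-1 correspondence between cores}, then apply the $\pi$-rotation $\beta$ of $X_C$, and finally apply $\sigma$ to land in $\overline X_C$. Then Lemma~\ref{lem:computing stabilizers} says that $\overline Y_3$ is a connected component of $\overline Y_2\otimes_{X_A}\sigma\beta(Y_2)$, and this reduces to a direct (but tedious) fiber-product computation whose output I claim is exactly: each component either has simply connected $\widetriangle Y_3$, or is one of the exceptional graphs in Figure~\ref{fig:33(2p+1)proof2}. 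Labeling vertices by the bicoloring keeps the bookkeeping manageable.

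For the second assertion, that $\overline Y_5\to \overline Y_3$ is always an embedding of a subgraph, I split on which kind of $\overline Y_3$ occurs. If $\widetriangle Y_3$ is simply connected, then Lemma~\ref{lem:simply connected Y triangle} applied with $\ell=3$ gives the embedding directly. In the remaining, exceptional case where $\overline Y_3$ is one of the graphs of Figure~\ref{fig:33(2p+1)proof2}, I have to go one more step: compute $\sigma\beta(Y_3)$ in the same style (lift, rotate, fold), compute $\overline Y_4$ as a connected component of $\overline Y_3\otimes_{X_A}\overline Y_3$ via the even-$\ell$ formula in Lemma~\ref{lem:computing stabilizers}, and then $\overline Y_5$ as a connected component of $\overline Y_4\otimes_{X_A}\sigma\beta(Y_4)$. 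The claim to verify is that every such $\overline Y_5$ either has $\widetriangle Y_5$ simply connected (whence Lemma~\ref{lem:simply connected Y triangle} gives the embedding $\overline Y_5\hookrightarrow \overline Y_3$ through $\overline Y_5\hookrightarrow \overline Y_3$ via the chain of inclusions) or is color-preserving isomorphic to $\overline Y_3$ itself with the same map to $\overline X_C$, in which case the inclusion is the identity on a subgraph.

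The main obstacle is the bookkeeping in this final step: relative to the $(3,2n+1,2p+1)$ case treated in Lemma~\ref{lem:3(2n+1)(2p+1)}, the additional even length$\mbox{-}3$ loop produces higher local valence at the identified vertices of $\overline X_C$, so the fiber products $\overline Y_3\otimes_{X_A}\overline Y_3$ and $\overline Y_4\otimes_{X_A}\sigma\beta(Y_4)$ have many more components to enumerate. Once that enumeration is carried out, combining both cases gives the statement, and Lemma~\ref{lem: finitely many colored graphs} then yields finite stature for $G_{33(2p+1)}$ with respect to $\{A\}$.
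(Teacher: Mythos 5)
Your first half is fine and matches the paper: invoke Lemma~\ref{lem:fiberprod} to reduce to the exceptional $Y_2$ (the right graph of Figure~\ref{fig:fiber product with some 3s}), compute $\sigma\beta(Y_2)$ by lift--rotate--fold as in Lemma~\ref{lem:3(2n+1)(2p+1)}, and enumerate the components of $\overline Y_2\otimes_{X_A}\sigma\beta(Y_2)$; this is exactly how the paper obtains the dichotomy for $\overline Y_3$. The problem is in your argument for the ``moreover'' clause. In the exceptional branch you propose to show that every $\overline Y_5$ either has $\widetriangle Y_5$ simply connected, and then to conclude the embedding $\overline Y_5\hookrightarrow\overline Y_3$ from Lemma~\ref{lem:simply connected Y triangle}. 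This is a misapplication of that lemma: its hypothesis is that $\widetriangle Y_\rho$ is simply connected for the \emph{shorter} path $\rho$, i.e.\ you would need $\widetriangle Y_3$ (or, for a chain $\overline Y_5\hookrightarrow\overline Y_4\hookrightarrow\overline Y_3$, both $\widetriangle Y_4$ and $\widetriangle Y_3$) to be simply connected --- and you are precisely in the case where $\widetriangle Y_3$ is \emph{not} simply connected. Simple connectivity of the source $\widetriangle Y_5$ buys you nothing: the map $\overline Y_5\to\overline Y_3$ is only an immersion of precovers, and an immersion with simply connected source need not be injective (an arc immerses onto a loop). So in the one case where the lemma's hypothesis fails, your argument gives no embedding at all. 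A secondary issue: by Lemma~\ref{lem:computing stabilizers} the even step is a fiber product of two \emph{possibly different} graphs $\overline Y_3$ and $\overline Y_3'$ (different choices of $K_3$, $K_3'$), so your enumeration would also have to treat mixed pairs, not just $\overline Y_3\otimes_{X_A}\overline Y_3$ with the same factor twice.

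The paper closes this gap with a different, purely combinatorial closure argument that avoids Lemma~\ref{lem:simply connected Y triangle} entirely in the exceptional branch: it verifies that the collection of graphs in Figure~\ref{fig:33(2p+1)proof2} (i) is closed under fiber products over $X_A$ in the sense that the fiber product of any two graphs in the collection is a subgraph of a graph in the collection, and (ii) is invariant under $\beta$ (Figure~\ref{fig:33(2p+1)proof3}). Property (i) applied to the even step shows every $\overline Y_4$ is a subgraph of some $\overline Y_3$; property (ii) then shows the same for the odd step producing $\overline Y_5$, since $\sigma\beta(Y_4)$ again lies (as a subgraph) in the collection. This yields the embedding $\overline Y_5\hookrightarrow\overline Y_3$ directly, with no appeal to simple connectivity. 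If you want to salvage your approach, you should replace the appeal to Lemma~\ref{lem:simply connected Y triangle} by exactly this kind of closure statement for the exceptional family, verified by explicit fiber-product computations including the mixed cases.
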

 \begin{proof}
  We write $P=2p+1$. By Lemma~\ref{lem:fiberprod}, every $Y_2$ either has simply connected $\widetriangle Y_2$ or is isomorphic to the right graph in Figure~\ref{fig:fiber product with some 3s}. 
 
We compute $\sigma\beta(Y_2)$ similarly as in Lemma~\ref{lem:3(2n+1)(2p+1)}. This is illustrated in Figure~\ref{fig:33(2p+1)proof}.
  \begin{figure}
\includegraphics[scale=0.35]{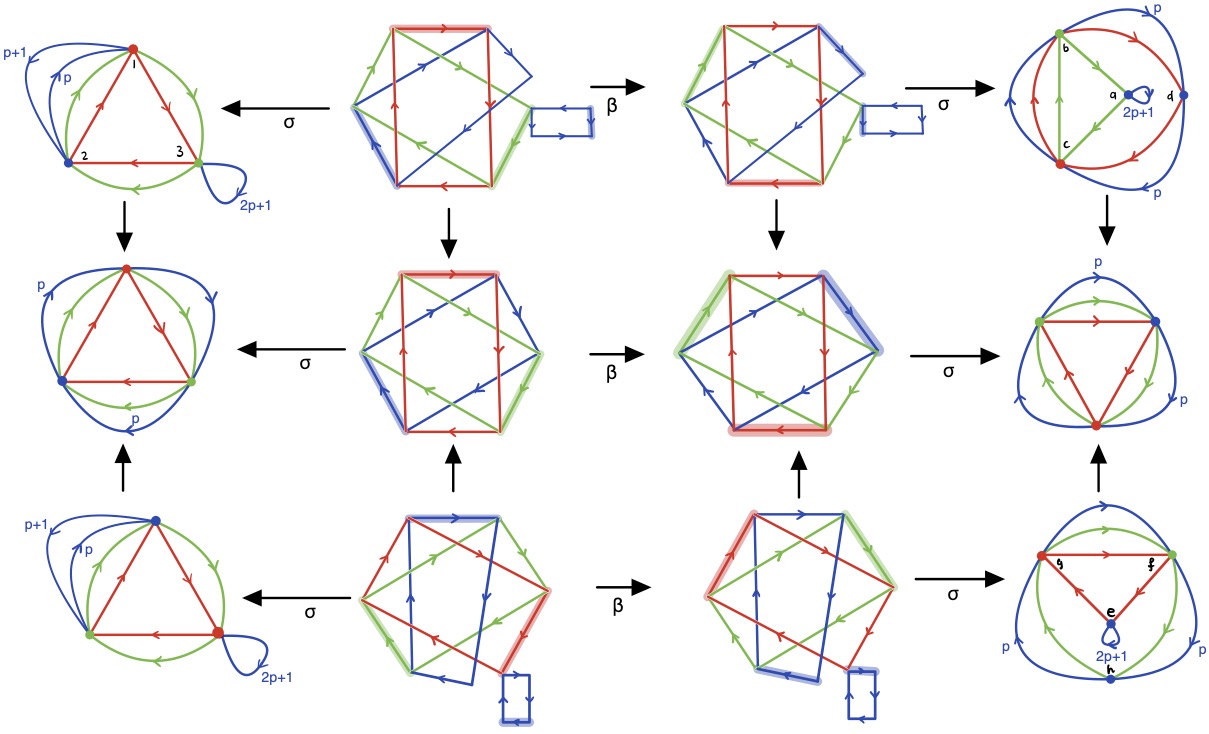}
\caption{$(M,N,P) = (3, 3, 2p+1)$. Each of the two rows of vertical arrows corresponds to respectively: $\overline Y_2\to \overline X_C$, $Y_2\to X_C$, $\beta(Y_2)\to X_C$, and $\sigma \beta(Y_2) \to \overline X_C$. }\label{fig:33(2p+1)proof}
\end{figure}
Once again, for each of the two choices of $\sigma\beta(Y_2)$ we compute the fiber product $\overline Y_2\otimes \sigma\beta(Y_2)$. As a result we obtain that $\overline Y_3$ is either a monochrome (blue) cycle, or it is isomorphic to one of the graphs in Figure~\ref{fig:33(2p+1)proof2}.

We now note that the collection of graphs in Figure~\ref{fig:33(2p+1)proof2}:
\begin{itemize}
\item has the property that the fiber product of any two graphs is a subgraph of one of the graphs in the collection, and 
\item is invariant under $\sigma\beta\sigma^{-1}$, as verified in Figure~\ref{fig:33(2p+1)proof3}.
\end{itemize}
The first fact implies that every $\overline Y_4$ is a subgraph of some $\overline Y_3$. The second fact implies that this is also the case for $\overline Y_5$.
In particular, every $\overline Y_5\to \overline Y_3$ is an embedding.
\begin{figure}
\includegraphics[scale=0.4]{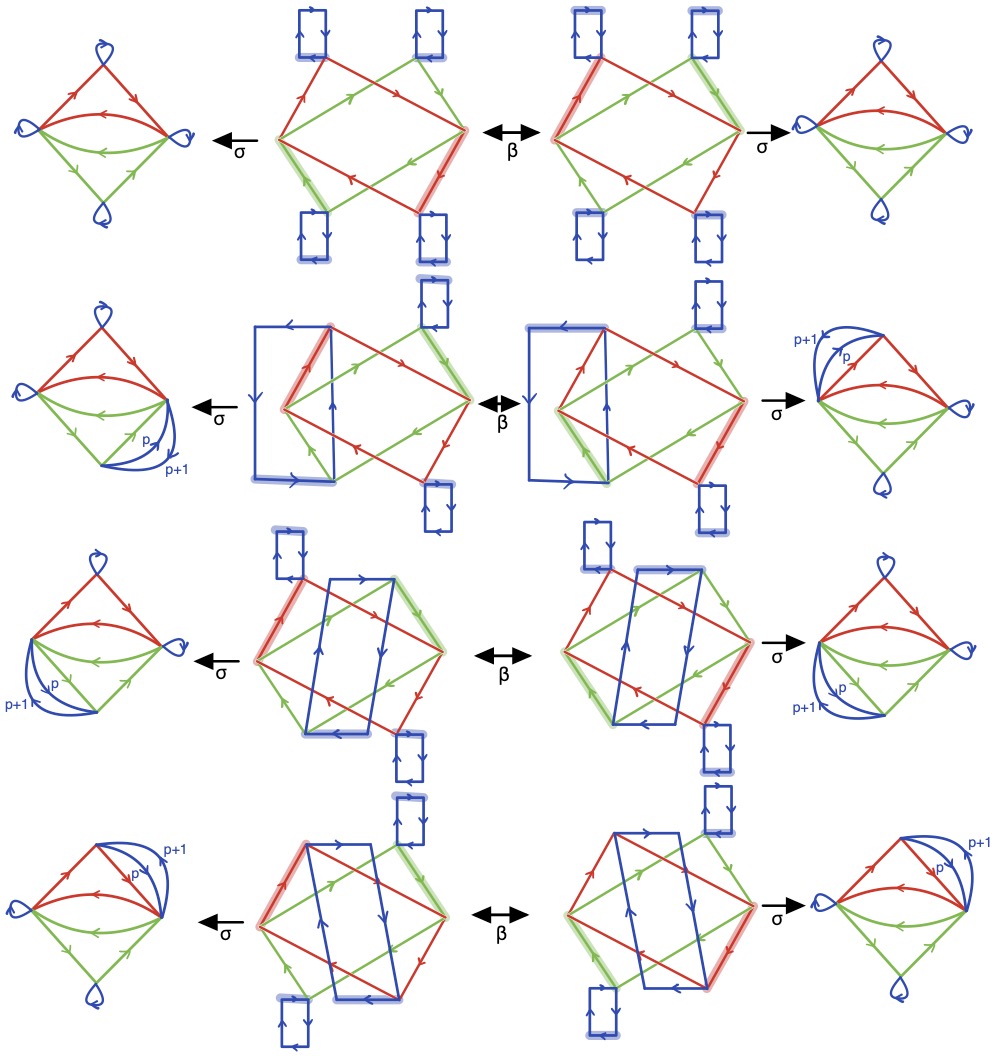}
\caption{Each unlabelled blue loop has length $2p+1$}\label{fig:33(2p+1)proof3}
\end{figure}
\end{proof}

We now summarize what we have proven in this subsection.
\begin{prop}\label{cor: all odd}
The Artin group $G_{MNP}$ where $M,N,P\geq 3$ are odd has finite stature with respect to $\{A\}$, where $A$ is as described in Theorem~\ref{thm:splitting}. 
\end{prop}

\begin{proof}
When $M=N=P=3$, the statement follows from Proposition~\ref{lem:333}. 
The case where $M=N=3$, and $P=2p+1\geq 5$ follows from Lemma~\ref{lem:33(2p+1)} and Lemma~\ref{lem: finitely many colored graphs}.
The case where $M=3$, $N=2n+1\geq 5$, and $P=2p+1\geq 5$ follows from Lemma~\ref{lem:3(2n+1)(2p+1)} and Lemma~\ref{lem: finitely many colored graphs}.
Finally, the case where $M=2m+1\geq 5$, $N=2n+1\geq 5$, and $P=2p+1\geq 5$ is a consequence of Lemma~\ref{lem:three odd} and Lemma~\ref{lem: finitely many colored graphs}.
\end{proof}
We note that the residual finiteness of $G_{333}$ follows from \cite{Squier87}. The residual finiteness of $G_{MNP}$ where $M,N,P\geq 5$ was proven in \cite{JankiewiczArtinRf}. However, the methods of \cite{JankiewiczArtinRf} do not cover the cases where one or two of $M,N,P$ are equal $3$.

\subsection{The case where $\{M,N,2\}$ where $M,N\geq 4$}\label{sec:2}
We first focus on the case where $M,N$ are both even. We recall that, unlike in the previous cases, $G_{MNP}$ splits as an HNN-extension $A*_B$, as in Theorem~\ref{thm:splitting2}.
\begin{lem}\label{lem:eveneven2 proof}
Let $M=2m, N=2n$ and $P=2$. The graphs $\overline{\phi_1X_B}$ and $\overline{\phi_2 X_B}$ are (unbased) isomorphic. In particular, the stabilizer of every finite path in the Bass-Serre tree of the splitting of $G_{MNP} = A*_B$ is conjugate to a subgroup of $A$ represented by $\overline{\phi_1X_B}$ or a wedge of monochrome cycles.
\end{lem}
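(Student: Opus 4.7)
The plan is in two stages. For the first claim, I would verify directly from Figure~\ref{fig:2eveneven} that the graphs $\overline{\phi_1 X_B}$ and $\overline{\phi_2 X_B}$ are color-preserving isomorphic as unbased graphs: both are obtained from $X_B$ by the same sequence of edge-subdivisions and edge-collapses in the $\sigma$ factorization, and then embedded into $X_A$ by attaching maps whose images differ only by an involutive symmetry of $X_A$ respecting the colors. Hence the two resulting colored subgraphs of $X_A$ agree up to relabelling the base vertex.

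Next, I would analyze the Bass-Serre tree $T$ of the HNN-extension $G_{MNP}=A*_B$. All vertex stabilizers in $T$ are conjugate to $A$, and each edge stabilizer is an $A$-conjugate of either $\phi_1(B)$ or $\phi_2(B)$ once its incident vertex stabilizer is identified with $A$. For a finite path $\rho$ through a fixed vertex $\tilde v$ with $\Stab(\tilde v)$ identified with $A$, the stabilizer $\Stab(\rho)$ can be written as a finite intersection of $A$-conjugates of $\phi_1(B)$ and $\phi_2(B)$, by an HNN analog of Lemma~\ref{lem:stabilizers as intersections} obtained by tracing the stable letter and intermediate $A$-elements along $\rho$. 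By the first part, after applying $\sigma$ each such factor is represented by a precover of $\overline{X}_A$ color-preserving isomorphic to $\overline{\phi_1 X_B}$.

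By Lemma~\ref{lem:fiber product}, the reduced precover representing $\Stab(\rho)$ is then a connected component of an iterated fiber product of copies of $\overline{\phi_1 X_B}$ over $X_A$. The key computation, analogous to Lemma~\ref{lem:fiberprod} and Lemma~\ref{lem:three odd}, is a finite case check: every connected component of the single-step fiber product $\overline{\phi_1 X_B}\otimes_{X_A}\overline{\phi_1 X_B}$ is either color-preserving isomorphic to $\overline{\phi_1 X_B}$, or is a wedge of monochrome cycles of the lengths $\ell_i$ prescribed by Lemma~\ref{lem:preserved length of colored cycles}. Granted this base case, the general statement follows by induction on the number of factors: once a component drops to a wedge of monochrome cycles, further fiber products with any precover arising here remain wedges of monochrome cycles (again by Lemma~\ref{lem:preserved length of colored cycles}), and components still isomorphic to $\overline{\phi_1 X_B}$ continue to yield only these two types at the next step.

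The main obstacle I anticipate is carrying out this base-case enumeration carefully: the graph $\overline{\phi_1 X_B}$ has several inequivalent choices of base-vertex pair, each producing a potentially different connected component of $\overline{\phi_1 X_B}\otimes_{X_A}\overline{\phi_1 X_B}$, and one must verify that no exotic component arises. A secondary technical point is writing down the HNN analog of Lemma~\ref{lem:stabilizers as intersections} in enough detail to justify the reduction to iterated fiber products over $X_A$; this should parallel the amalgamated-product argument, with the stable letter $t$ playing the role previously played by the coset representative $b$ of $C$ in $B$.
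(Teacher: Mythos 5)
Your proposal is correct and takes essentially the same route as the paper: the paper's own proof consists precisely of your two key steps, namely reading off from Theorem~\ref{thm:splitting2} that $\overline{\phi_1X_B}$ and $\overline{\phi_2 X_B}$ are color-preserving isomorphic, and then checking that every connected component of $\overline{\phi_1X_B}\otimes_{X_A} \overline{\phi_1X_B}$ is either color-preserving isomorphic to $\overline{\phi_1X_B}$ or a wedge of monochrome cycles, with the HNN-analog of Lemma~\ref{lem:stabilizers as intersections} and the induction on path length left implicit exactly as you supply them. One small caveat: for paths of combinatorial length greater than two, your intermediate claim that $\Stab(\rho)$ is an intersection of \emph{$A$-conjugates} of $\phi_1(B)$ and $\phi_2(B)$ is not literally correct (edges far from $\tilde v$ contribute $G$-conjugates, with conjugators involving the stable letter), but this does not damage the argument, since the recursive iterated-fiber-product description you actually invoke---the analog of Lemma~\ref{lem:computing stabilizers}---is the correct formulation and is what the paper relies on.
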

\begin{proof}
The graphs  $\overline{\phi_1X_B}$ and $\overline{\phi_2 X_B}$ are computed in Theorem~\ref{thm:splitting2}, and it is easy to see that the two graphs are isomorphic. Every connected component $Y$ of the fiber product $\overline{\phi_1X_B}\otimes_{X_A} \overline{\phi_1X_B}$ is either isomorphic to $\overline{\phi_1X_B}$ or is a wedge of monochrome cycles.
\end{proof}

Next, we consider the cases where at both $M,N$ are odd.
\begin{lem}\label{lem:2oddodd}
Let $P=2$ and $M=2m+1,N=2n+1\geq 5$. Every graph $\overline Y_2$ either is isomorphic  to the left graph in Figure~\ref{fig:2andsomeodd}(a) or it is a wedge of monochrome cycles. If $Y$ is the left graph in Figure~\ref{fig:2andsomeodd}(a), then $\sigma\beta(Y)$ is (unbased) isometric to $Y$. Therefore, every graph $\overline Y_i$ either one of the two graphs in Figure~\ref{fig:2andsomeodd}(a), or it is a wedge of monochrome cycles. 
\end{lem}
\begin{figure}
\includegraphics[scale=0.4]{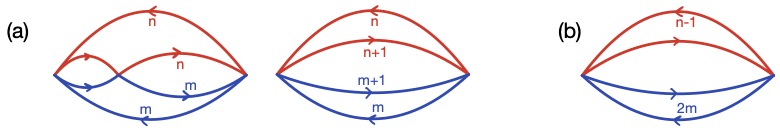}
\caption{$P = 2$. All the graphs $\overline Y_\ell$ are either wedges of circles, or one of the graphs above, when (a) $M,N\geq 5$ are both odd and $P=2$, and (b) exactly one of $M,N\geq 4$ is odd and $P=2$. }\label{fig:2andsomeodd}
\end{figure}
\begin{proof} The first statement was proven in \cite[Rem 3.5]{JankiewiczArtinSplittings}. The proof of the second statement is illustrated in Figure~\ref{fig:invarianceofY2}(a). Let $\overline Y_2$ be the left graph in Figure~\ref{fig:2andsomeodd}(a). Then  every connected component $\overline Y_3$ of the fiber product $\overline{Y}_2\otimes_{X_A} \sigma\beta(Y_2) = \overline{Y}_2\otimes_{X_A} \overline{Y}_2$ is a wedge of monochrome cycles, is isomorphic to $\overline Y_2$ or to the right graph in Figure~\ref{fig:2andsomeodd}(a). 
We also note that if $Y$ is the right graph in Figure~\ref{fig:2andsomeodd}(b), then $\sigma\beta(Y)$ is isometric to $Y$. We conclude that every graph $\overline Y_\ell$ either one of the two graphs in Figure~\ref{fig:2andsomeodd}(a), or it is a wedge of monochrome cycles. 
\begin{figure}
\includegraphics[scale=0.4]{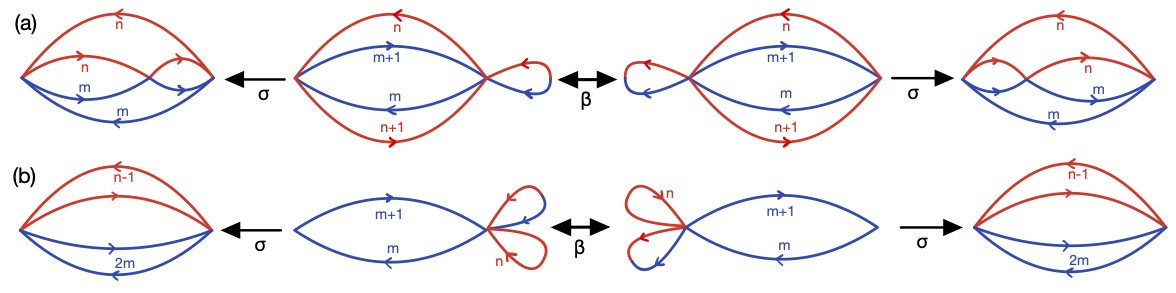}
\caption{$(M,N,2) = (2m+1, N, 2)$. In case (a) $N=2n+1$, and in case (b) $N=2n$. If $\overline Y$ is the rightmost graphs, then it is isometric to $\sigma\beta(Y)$. }\label{fig:invarianceofY2}
\end{figure}
\end{proof}

Finally, we consider the cases where exactly one of $M,N$ is odd.

\begin{lem}\label{lem:2oddeven}
Let $P=2$, $M=2m+1\geq 5$, and $N=2n\geq 4$. Every graph $\overline Y_2$ either is isometric to the graph in Figure~\ref{fig:2andsomeodd}(b) or it is a wedge of monochrome cycles. If $Y$ is the graph in Figure~\ref{fig:2andsomeodd}(b), then $\sigma\beta(Y)$ is (unbased) isometric to $Y$. Therefore, every graph $\overline Y_i$ either one of the graph in Figure~\ref{fig:2andsomeodd}(b), or it is a wedge of monochrome cycles. 
\end{lem}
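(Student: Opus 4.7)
The plan is to mirror the strategy of Lemma~\ref{lem:2oddodd}, replacing the computations for odd $N$ with analogous ones for even $N$. Since $M$ is odd and $P=2$, Theorem~\ref{thm:splitting2} applies and gives a splitting $G_{MNP}=A\ast_C B$ with $[B:C]=2$, so the recursive description of $\overline Y_\ell$ from Lemma~\ref{lem:computing stabilizers} is available, and I will compute along the lines of that recursion.

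First I would compute the base case $\overline Y_2$ as a connected component of the fiber product $\overline X_C\otimes_{X_A}\overline X_C$. The graph $\overline X_C$ in this regime is the one appearing in the bottom row of Figure~\ref{fig:2MN}, and the map $\overline X_C\to X_A$ is read off from that figure. Enumerating pairs of vertices of $\overline X_C$ with common image in $X_A$ (equivalently, pairs of basepoints) and tracing the resulting connected component produces either a wedge of monochrome cycles (when the two copies of $\overline X_C$ share a vertex of a single color only trivially) or the graph shown in Figure~\ref{fig:2andsomeodd}(b). This enumeration is essentially the same combinatorial check performed in \cite[Rem 3.5]{JankiewiczArtinSplittings} for the odd/odd case, only with the green cycle being of even length $2n$ and therefore folding onto a cycle of length $n$ in $\overline X_C$.

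Next I would verify the invariance statement: when $Y$ is the graph in Figure~\ref{fig:2andsomeodd}(b), the graph $\sigma\beta(Y)$ is (unbased) isometric to $Y$. The argument is pictorial, exactly parallel to Figure~\ref{fig:invarianceofY2}(a) but with the green portion reflecting the $N=2n$ structure. Concretely, one would lift $Y$ to $X_C$ via $\sigma^{-1}$, apply the $\pi$-rotation $\beta$ of $X_C$, and then push forward by $\sigma$; because $Y$ is itself a symmetric configuration of one red cycle, one blue cycle, and one even-length green cycle glued in the manner dictated by $\overline X_C$, the rotation $\beta$ permutes the two "halves" of $Y$ and the result is combinatorially the same graph.

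With these two facts in place, the lemma follows by induction on $\ell$ using Lemma~\ref{lem:computing stabilizers}. For even $\ell$, $\overline Y_\ell$ is a connected component of $\overline Y_{\ell-1}\otimes_{X_A}\overline Y_{\ell-1}$; for odd $\ell$, it is a connected component of $\overline Y_{\ell-1}\otimes_{X_A}\sigma\beta(Y_{\ell-1})$, and by the invariance just established both factors are either the graph of Figure~\ref{fig:2andsomeodd}(b) or wedges of monochrome cycles. A short case analysis of the fiber product of two such graphs over $X_A$ shows that each connected component is again either this graph or a wedge of monochrome cycles, closing the induction. The main obstacle is the bookkeeping in the base-case fiber-product computation, since the presence of the even green cycle of length $2n$ (rather than $2n+1$) changes the identification pattern, but this is routine once the vertices of $\overline X_C$ are bi-coloured as in the proof of Lemma~\ref{lem:fiberprod}.
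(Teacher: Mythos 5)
Your proposal follows essentially the same route as the paper's proof: the base-case identification of $\overline Y_2$ (which the paper simply cites from \cite[Prop 3.4]{JankiewiczArtinSplittings} rather than recomputing), the pictorial lift-by-$\sigma^{-1}$, rotate-by-$\beta$, refold-by-$\sigma$ argument for the invariance $\sigma\beta(Y)\cong Y$ (the paper's Figure~\ref{fig:invarianceofY2}(b)), and the closing observation that connected components of the relevant fiber products over $X_A$ are again either this graph or wedges of monochrome cycles. The only divergence is that you re-derive the base case by hand instead of citing it, which is not a genuinely different approach.
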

\begin{proof} The first statement was proven in \cite[Prop 3.4]{JankiewiczArtinSplittings}. The proof of the second statement is illustrated in Figure~\ref{fig:invarianceofY2}(b). Let $Y$ denote the graph in Figure~\ref{fig:invarianceofY2}(b). Every connected component of the fiber product $Y\otimes_{X_A} Y$ is either isometric to $Y$ or it is a wedge of monochrome cycles.
\end{proof}

\begin{prop}\label{cor: 2MN}
The Artin group $G_{MN2}$ where $M,N\geq 4$ has finite stature with respect to $\{A\}$, where $A$ is as described in Theorem~\ref{thm:splitting}.
\end{prop}

\begin{proof}All the cases can be deduced from Lemma~\ref{lem: finitely many colored graphs} together with 
\begin{itemize}
\item Lemma~\ref{lem:eveneven2 proof} when $M,N$ are both even;
\item Lemma~\ref{lem:2oddeven} when exactly one of $M,N$ is odd;
\item Lemma~\ref{lem:2oddodd} when both $M,N$ are odd.\qedhere
\end{itemize}
\end{proof}

Residual finiteness of $G_{MN2}$ where at least one of $M,N$ is even was proven in \cite{JankiewiczArtinSplittings}, but the case of both $M,N$ odd is a new result.

\subsection{Triangle Artin groups with label $\infty$} Note that all  of the above proofs are valid if any of the labels $M,N,P$ are equal to $\infty$.

\bibliographystyle{alpha}
\bibliography{../../../kasia}

\end{document}